\numberwithin{equation}{section}
\newtheorem{theorem}{Theorem}[section]
\newtheorem{remark}{Remark}[section]
\newtheorem{lemma}{Lemma}[section]
\newtheorem{corollary}{Corollary}[section]
\newtheorem{assumption}[theorem]{Assumption}
\newtheorem{example}{Example}[section]
\def\P01{P_{\mathcal{A}}}
\def\II{(\Omega)}
\def\d{\mathrm{d}}
\title{Hybrid Neural-Network FEM Approximation of Diffusion Coefficient in Elliptic and Parabolic Problems\thanks{The work of B. Jin is supported by UK EPSRC grant EP/T000864/1 and EP/V026259/1, and a start-up fund from The Chinese University of Hong Kong. The work of  Z. Zhou is supported by Hong Kong Research Grants Council (15304420) and an internal grant of Hong Kong Polytechnic University (Project ID: P0038888, Work Programme: ZVX3).}}
\author{Siyu Cen\thanks{Department of Applied Mathematics, The Hong Kong Polytechnic University, Kowloon, Hong Kong, P.R. China. (\texttt{siyu2021.cen@connect.polyu.hk; zhizhou@polyu.edu.hk})}
\and Bangti Jin\thanks{Department of Mathematics, The Chinese University of Hong Kong, Shatin, New Territories, Hong Kong, P.R. China (\texttt{bangti.jin@gmail.com, b.jin@cuhk.edu.hk}).}
\and Qimeng Quan\thanks{School of Mathematics and Statistics, Wuhan University, Wuhan 430072, P. R. China (\texttt{quanqm@whu.edu.cn})}
\and Zhi Zhou\footnotemark[2]}
\begin{document}

%\subjclass{...}
\maketitle

\begin{abstract}
In this work we investigate the numerical identification of the diffusion coefficient in elliptic and parabolic problems using neural networks. The numerical scheme is based on the standard output least-squares formulation where the Galerkin finite element method (FEM) is employed to approximate the state and neural networks (NNs) act as a smoothness prior to approximate the unknown diffusion coefficient. A projection operation is applied to the NN approximation in order to preserve the physical box constraint on the unknown coefficient. The hybrid approach enjoys both rigorous mathematical foundation of the FEM and inductive bias / approximation properties of NNs. We derive \textsl{a priori} error estimates in the standard $L^2(\Omega)$ norm for the numerical reconstruction, under a positivity condition which can be verified for a large class of problem data.
The error bounds depend explicitly on the noise level, regularization parameter and discretization parameters (e.g., spatial mesh size, time step size, and depth, upper bound and number of nonzero parameters of NNs).
We also provide extensive numerical experiments, indicating that the hybrid method is very robust for large noise when compared with the pure FEM approximation.
\vskip10pt
\noindent\textbf{Keywords:} inverse coefficient problem, output least-squares formulation,
finite element method, neural networks, error estimate, numerical quadrature
\end{abstract}

\section{Introduction}
In this work, we study the inverse problem of recovering a space-dependent diffusion coefficient in elliptic and parabolic problems from one internal measurement using neural networks. Let $\Omega \subset\mathbb{R}^d\,(d=1,2,3)$ be a convex polyhedral domain with a boundary $\partial\Omega$. Consider the following elliptic problem
\begin{equation}\label{equ:elliptic problem}
	\left\{\begin{aligned}
		-\nabla\cdot(q\nabla u) &= f, \ &\mbox{in}&\ \Omega, \\
		u&=0, \ &\mbox{on}&\ \partial\Omega,
	\end{aligned}\right.
\end{equation}
where $f$ is a known source. The diffusion coefficient $q$ belongs to the admissible set
\begin{equation*}
	\mathcal{A} = \{q\in H^1(\Omega): c_0\leq q(x)\leq c_1 \mbox{ a.e. in } \Omega\},
\end{equation*}
with the constants $0<c_0<c_1<\infty$ being the lower and upper bounds on the diffusivity.
Below we use the notation $u(q)$ to indicate the dependence of the solution $u$ to problem \eqref{equ:elliptic problem} on the coefficient $q$. Further, we are given the noisy observational data $z^\delta$ in the domain $\Omega$:
\begin{equation*}
z^\delta(x) = u(q^\dagger)(x) + \xi(x), \quad x\in \Omega,
\end{equation*}
where $u(q^\dagger)$ denotes the exact data (for the exact coefficient $q^\dagger$), and $\xi$ denotes the noise. The data $z^\delta$ has an accuracy
$\delta=\|u(q^\dagger)-z^\delta\|_{L^2(\Omega)}$.
The inverse problem is to identify the diffusion coefficient $q^\dagger$ from $z^\delta$.  It arises naturally in many physical processes, e.g., in hydrology, where the parameter $q^\dag$ represents hydraulic diffusivity (or transmissivity in the 2D case) in the study of a confined inhomogeneous aquifer \cite{FrindPinder:1973,Yeh:1986}.

Due to excellent approximation property of neural networks (NNs) and recent algorithmic innovations, many methods based on NNs have been devised and have demonstrated impressive empirical performance on a variety of PDE inverse problems (see \cite{TanyuMaass:2022} for a recent overview). One prominent approach within the class is physics-informed neural networks (PINNs) \cite{raissi2019physics}. In the context of inverse problems, the idea is to minimize a PDE residual functional, and then to enforce both consistency with observational data via a suitable data-fitting functional and \textit{a priori} regularity assumption on the unknown via a suitable penalty. The unknowns are then approximated via NNs, and the resulting loss is trained to yield an approximation.
%This formulation, however, may require good smoothness on the noisy measurement due to the presence of the differential operator and the residual minimization structure of PINNs.
The theoretical analysis of neural PDE solvers for direct problems is still at an early stage, when compared with more conventional numerical methods, e.g., finite element methods (FEMs). This has greatly hindered the mathematical analysis of relevant inverse solvers.
To have the best of both approaches, one natural idea is to combine neural networks (NNs) with FEM.

In this work, we study the hybrid NN-FEM approach for recovering the unknown coefficient $q$ in problem \eqref{equ:elliptic problem} (and also the parabolic case in \eqref{equ:parabolic problem}), and provide an analysis on the numerical approximation. We contribute in the following three aspects. First, we develop a novel reconstruction formulation by incorporating the projection operator, which automatically guarantees the well-posedness of the discrete formulations. Second, we derive the $L^2(\Omega)$ error estimates on the NN approximation $q_\theta^*$ for both inverse elliptic and parabolic problems, under mild conditions on the problem data ($u_0$, $f$, $q$ and $\Omega$), cf. Theorems \ref{thm:error-ellip} and \ref{thm: error in para} for the elliptic and parabolic cases, respectively. The error bounds depend explicitly on the approximation accuracy  $\epsilon$ of the NN, discretization parameters ($h$ and $\tau$), the noise level $\delta$ and the regularization parameter $\gamma$. The overall argument relies heavily on a suitable positivity condition, cf. \eqref{Cond: P} and \eqref{Cond: P para}.
Third and last, in the context of hybrid solvers, quadrature errors are inevitable, due to the presence of the NN function in various integrals. We derive a useful $L^2(\Omega)$ bound depending on the NN architecture (e.g., width and maximum bound), cf. Theorems \ref{thm:error-ellip-q} and \ref{thm: error in para, quad}. The technical proofs rely on  smoothness properties of NNs and the structure of the finite element space. To the best of our knowledge, these results are new and provide theoretical foundations for using the hybrid formulation for solving PDE inverse problems.

Now we review existing works on the hybrid numerical approximation of the concerned inverse problem. The study on identifying the diffusion coefficient using hybrid discretization is firstly proposed in \cite{berg2021neural}. Berg and Nystr\"{o}m \cite{berg2021neural} proposed the hybrid discretization strategy, i.e., discretize the unknown coefficient and the state by NNs and Galerkin FEM, respectively, in an unregularized functional (i.e., data-fitting only), discussed in detail the training of the NN using the adjoint technique \cite{Cea:1986}, and presented extensive numerical experiments with recovering the diffusion coefficient in second-order elliptic problems from full and partial interior data.
The goal is to use NNs as an implicit smoothness prior. Since the objective is unregularized, the approach solely relies on the use of shallow one-layer NNs (with very few neurons) to achieve sufficient regularization (in a manner similar to the classical projection methods).
The latter restricts the admissible coefficient to a very low-dimensional manifold.  Later, Mitusch et al \cite{mitusch2021hybrid} extended the hybrid formulation to both stationary and transient as well as linear/nonlinear PDEs, and discussed the crucial role of a proper penalty term, including the $H^1(\Omega)$ penalty, in order to stabilize the training process.  In both works \cite{berg2021neural,mitusch2021hybrid}, the authors  provided extensive
 numerical experiments to show the practicability of the hybrid NN-FEM method with one-layer NNs, by comparing the approach with the more conventional FEM discretization.
The extensive numerical studies in the works \cite{berg2021neural,mitusch2021hybrid} show that the approach does have certain merits and holds some potentials for PDE inverse problems.
 Huang et al \cite{HuangDarve:2020jcp} proposed the hybrid approach to learn constitutive relations from indirect observations, where the physical parameters / laws are expressed by NNs whereas the state is discretized using FEM. In addition, the authors provided a software framework for a variety of problems, and discussed additional functionality, e.g., uncertainty quantification. The formulation in this work differs in using a projection operator to ensure the box constraint.

Neural inverse PDE solvers have attracted significant attention from both theoretical and numerical aspects (see \cite{TanyuMaass:2022} for a recent overview).
However, so far there are still very few error bounds on discrete approximations of the neural inverse solvers, including hybrid NN-FEM. This is attributed to the high complexity of the discretization strategy, nonlinearity of the forward map for the concerned inverse problem and strong nonconvexity of the regularized functional. One exception is \cite{MishraMolinaro:2022ima},  where the authors derived an abstract error estimate for the PINNs solving linear inverse problems using the concept of conditional stability. Jin et al \cite{jin2022imaging} derived a generalization bound on the functional, but not on the recovered conductivity. Kaltenbacher and Nguyen \cite{KaltenbacherNguyen:2022} studied approximating a nonlinearity in a parabolic model with NNs and analyzed the convergence for Tikhonov regularization and Landweber method. In sharp contrast, the theory for the Galerkin FEM approximation is relatively well understood. Indeed, in a series of works \cite{wang2010error,jin2021error,jin2022convergence}, several researchers have investigated the standard Galerkin FEM discretization for both diffusion coefficient and the state, and establish $L^2(\Omega)$ error bounds on the numerical approximation. The overall proofs include conditional stability argument and suitable positivity conditions. In this work, we combine these tools with recent NN approximation theory, and derive first error bounds for hybrid discretization. It substantially expands the scope of the numerical analysis of neural inverse PDE solvers, and represents an important step towards analyzing fully neural inversion schemes.

The rest of the paper is organized as follows. In Sections \ref{sec:elliptic} and \ref{sec:parabolic}, we establish the $L^2(\Omega)$ error bounds of the hybrid NN-FEM approximation for elliptic and parabolic cases with or without numerical quadrature.
In Section \ref{sec: numerics}, we describe the algorithmic details of the approaches and present several numerical experiments to complement the theoretical results. Throughout, for any $m\geq0$ and $p\geq1$, we denote by $W^{m,p}(\Omega)$ and $W^{m,p}_0(\Omega)$ the standard Sobolev spaces of order $m$, equipped with the norm $\|\cdot\|_{W^{m,p}(\Omega)}$ and the semi-norm $|\cdot|_{W^{m,p}(\Omega)}$. We also write $H^{m}(\Omega)$ and $H^{m}_{0}(\Omega)$ with the norm $\|\cdot\|_{H^m(\Omega)}$ if $p=2$ and write $L^p(\Omega)$ with the norm $\|\cdot\|_{L^p(\Omega)}$ if $m=0$. We use $(\cdot,\cdot)$ to denote the $L^2(\Omega)$ inner product.  We denote by $c$ a generic  constant not necessarily the same at each occurrence but it is always independent of the approximation accuracy $\epsilon$ of the NN (to the exact coefficient $q^\dag$), discretization parameters $h$ and $\tau$, noise level $\delta$ and regularization parameter $\gamma$.

\section{Preliminaries}

\subsection{Neural networks}
In this work, we employ fully connected feedforward neural networks. Let $L\in\mathbb{N}$ be the depth of a neural network (NN) and $\{d_\ell\}_{\ell=0}^L\subset\mathbb{N}$ be a sequence of integers, with $d_0=d$ and $d_L=1$, $d_\ell$ the number of neurons in the $\ell$th layer of the NN. Then the realization of the NN from $\Omega\subset\mathbb{R}^d$ to $\mathbb{R}$ is defined by
\begin{equation}\label{equ: network realization}
	\mbox{NN realization}\quad\left\{\begin{aligned}
		&v^{(0)}=x,\quad x\in\Omega,\\
		&v^{(\ell)}=\rho(A^{(\ell)}v^{(\ell-1)}+b^{(\ell)}),\quad \mbox{for }\ell=1,2,\cdots,L-1,\\
		&v:=v^{(L)}=A^{(L)}v^{(L-1)}+b^{(L)},
	\end{aligned}\right.
\end{equation}
where $\rho:\mathbb{R}\to\mathbb{R}$ is a nonlinear activation function and applied componentwise to a vector. Throughout, we take $\rho\equiv\tanh$: $x\to\frac{e^x-e^{-x}}{e^x+e^{-x}}$. $A^{(\ell)}\in\mathbb{R}^{d_\ell\times d_{\ell-1}}$ and $b^{(\ell)}\in\mathbb{R}^{d_{\ell}}$ are weight matrices and bias vectors at the $\ell$-th layer of the NN. The width $W$ of the NN is defined by $W:=\max_{\ell=0,\dots,L}d_\ell$. We denote the NN parametrization by $\theta=\{(A^{(\ell)},b^{(\ell)})\}_{\ell=1}^L\in \prod_{\ell=1}^L(\mathbb{R}^{d_\ell\times d_{\ell-1}}\times\mathbb{R}^{d_{\ell}})$.  The following approximation property holds \cite[Proposition 4.8]{guhring2021approximation}.
\begin{lemma}\label{lem:tanh-approx}
Let $s\in\mathbb{N}_0$ and $p\in[1,\infty]$ be fixed, and $v\in W^{k,p}(\Omega)$ with $k\geq  s+1$. Then for any $\epsilon>0$, there exists at least one $\theta\in\Theta$ with depth $\mathcal{O}\big(\log(d+k)\big)$ and total number of nonzero parameters $\mathcal{O}\big(\epsilon^{-\frac{d}{k-s-\mu (s=2)}}\big)$, where $\mu>0$ is arbitrarily small, such that the NN realization  $v_\theta$ of $\theta$ satisfies
\begin{equation}\label{lem: tanh approx}
     \|v-v_\theta\|_{W^{s,p}(\Omega)} \leq \epsilon.
\end{equation}
Moreover, the maximum norm of the weights in the NN is bounded by $\mathcal{O}(\epsilon^{-2-\frac{2(d/p+d+s+\mu(s=2))+d/p+d}{k-s-\mu (s=2)}})$.
\end{lemma}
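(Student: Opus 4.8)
The plan is to combine a localized polynomial approximation of $v$ with an emulation of polynomials and of a smooth partition of unity by $\tanh$ networks, tracking derivatives up to order $s$ throughout so that the error is controlled in the full $W^{s,p}(\Omega)$ norm rather than merely in $L^p(\Omega)$. The overall rate then emerges from balancing the polynomial approximation error against the number of local patches.

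First I would fix a uniform partition of $\Omega$ (or of a bounding cube containing it) into $N\sim h^{-d}$ congruent subcubes of side length $h$, together with a smooth partition of unity $\{\phi_j\}$ subordinate to a slightly enlarged cover with bounded overlap. On each subcube I would replace $v$ by its averaged Taylor polynomial $p_j$ of degree $k-1$; the Bramble--Hilbert lemma then yields the local estimate $\|v-p_j\|_{W^{s,p}}\lesssim h^{k-s}|v|_{W^{k,p}}$, so that $\|v-\sum_j\phi_j p_j\|_{W^{s,p}(\Omega)}\lesssim h^{k-s}$. Matching this to the target accuracy fixes $h\sim\epsilon^{1/(k-s)}$ and hence $N\sim\epsilon^{-d/(k-s)}$, which is the origin of the stated parameter count, with the $\mu$-correction entering only in the borderline case $s=2$.

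The heart of the argument is the emulation step. The key observation is that $\tanh$ is real-analytic with $\tanh(x)=x-\tfrac{1}{3}x^3+\cdots$, so finite linear combinations of $\tanh(\lambda x)$ evaluated at several scalings $\lambda$ reproduce, up to arbitrarily small error, the monomials $x,x^2,\dots$, the differences being chosen to cancel the unwanted low- and high-order terms. From a squaring gadget one builds multiplication via $xy=\tfrac{1}{4}\big((x+y)^2-(x-y)^2\big)$, and from multiplication one assembles arbitrary monomials, hence the local polynomials $p_j$ and the products $\phi_j p_j$. Because $\tanh$ is smooth, each gadget can be made accurate not only in value but in all derivatives up to order $s$, which is exactly what is needed for the $W^{s,p}$ estimate; this also explains the depth $\mathcal{O}(\log(d+k))$, since the degree $k-1$ and the dimension $d$ enter the multiplication tree only logarithmically.

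The main obstacle is the simultaneous control of the Sobolev error and the weight magnitudes. Extracting a monomial of degree $m$ from $\tanh$ by the difference-of-scalings trick forces the inner weights $\lambda$ to grow polynomially as the per-gadget tolerance shrinks, and differentiating a product of $s$ factors via the Leibniz rule multiplies these growth factors together; carefully tracking the resulting powers of $\epsilon$ is what yields the weight bound $\mathcal{O}(\epsilon^{-2-\cdots})$. One must also verify that summing the $N$ local contributions does not inflate the global $W^{s,p}$ error beyond a constant factor, which follows from the bounded-overlap property together with uniform bounds on the derivatives of $\{\phi_j\}$. I would therefore fix the per-gadget accuracy as a function of $N$, $h$, and $s$ only at the very end, once all the derivative bounds have been assembled.
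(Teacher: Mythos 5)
A preliminary remark: the paper itself contains no proof of this lemma --- it is quoted verbatim from \cite[Proposition 4.8]{guhring2021approximation} --- so your attempt can only be compared with the argument in that reference. Your outline does track its strategy faithfully at the structural level: a cover of $\Omega$ by $N\sim h^{-d}$ cubes, averaged Taylor polynomials with the Bramble--Hilbert bound $\|v-p_j\|_{W^{s,p}}\lesssim h^{k-s}|v|_{W^{k,p}}$, a smooth partition of unity with bounded overlap, emulation of the polynomials and the partition of unity by the activation, and bookkeeping of weight magnitudes; the balancing $h\sim\epsilon^{1/(k-s)}$, $N\sim\epsilon^{-d/(k-s)}$ is exactly the source of the parameter count.

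There is, however, a genuine flaw in your central emulation step as stated. You claim that finite linear combinations of $\tanh(\lambda x)$ over several scalings $\lambda$ reproduce the monomials $x,x^2,\dots$ up to arbitrarily small error. This is false for even degrees: $\tanh$ is odd, so every such combination $f(x)=\sum_i c_i\tanh(\lambda_i x)$ is an odd function, and since $\tfrac12\big(f(x)+f(-x)\big)=0$ while $\tfrac12\big(x^2+(-x)^2\big)=x^2$, one has $\max\big(|x^2-f(x)|,\,|x^2-f(-x)|\big)\ge x^2$, i.e.\ an $L^\infty$-error of at least $1$ on $[-1,1]$ no matter how the scalings and coefficients are chosen. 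Since your squaring gadget is the foundation of multiplication via $xy=\tfrac14\big((x+y)^2-(x-y)^2\big)$, the whole assembly chain for the local polynomials breaks as written. The repair is standard and uses the biases your networks already possess: either extract $x^m$ by $m$-th finite differences in the \emph{shift} direction, $\lambda^{-m}\sum_{j=0}^m(-1)^j\binom{m}{j}\tanh\big(\theta+(m/2-j)\lambda x\big)$, centered at a point $\theta$ with $\tanh^{(m)}(\theta)\neq 0$, which converges in $C^s_{\mathrm{loc}}$ as $\lambda\to0^+$; or first build the odd monomials by pure scalings (legitimate, since the Maclaurin series of $\tanh$ has all odd coefficients nonzero) and obtain even ones from shifted identities such as $x^2=\tfrac16\big((x+1)^3-(x-1)^3-2\big)$. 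Note also a directional slip: in such extractions the inner parameters \emph{shrink} ($\lambda\to0$) while the outer coefficients blow up like $\lambda^{-m}$; it is this outer growth, balanced against the $C^s$-accuracy of the finite differences, that drives the weight bound.

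Beyond this, the two genuinely delicate quantitative claims are asserted rather than argued: you state that the $\mu$-correction enters only at $s=2$ without identifying the mechanism (in the reference it stems from the fact that the tanh-built partition-of-unity gadgets can be controlled in $W^{1,\infty}$ at no extra cost, while their second derivatives incur an additional factor, degrading the rate by an arbitrarily small $\mu$), and the specific weight exponent $-2-\frac{2(d/p+d+s+\mu(s=2))+d/p+d}{k-s-\mu(s=2)}$ is not derived. With the parity issue repaired and these two computations carried out, your sketch would reproduce the cited proof; as it stands, one named step fails and the sharpest parts of the statement remain unverified.
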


We denote the set of NNs of depth $L$, the number of nonzero entries $N_\theta$, and maximum bound $R$ on the parameter vector $\theta$ by
\begin{equation*}
    \mathcal{N}(L,N_\theta,R) =: \{v_\theta \mbox{ is an NN with depth }L: \|\theta\|_{\ell^0}\leq N_\theta, \|\theta\|_{\ell^\infty}\leq R\},
\end{equation*}
where $\|\cdot\|_{\ell^0}$ and $\|\cdot\|_{\ell^\infty}$ denote the number of nonzero entries in and the maximum norm of, respectively, a vector. Further, for any $\epsilon>0$ and $p\geq 1$, we denote by $\mathfrak{P}_{p,\epsilon}$ the NN parameter set for the NN function class
\begin{equation*}
\mathcal{N}\Big(C\log(d+1), C\epsilon^{-\frac{d}{1-\mu}}, C \epsilon^{-2-\frac{2p+3d+3pd+2\mu}{p(1-\mu)}}\Big),
\end{equation*}
which will be used to approximate the coefficient $q$. We focus on two cases: $p=\max(2,d+\mu)$ (with small $\mu>0$) and $p=\infty$ for the cases without and with the quadrature error, respectively.

The next result bounds the tanh activation function $\rho$.
\begin{lemma}\label{lem:rho}
The following estimates hold
\begin{equation*}
    \|\rho\|_{L^\infty(\mathbb{R})}\leq 1, \quad  \|\rho'\|_{L^\infty(\mathbb{R})}\leq 1, \quad \|\rho''\|_{L^\infty(\mathbb{R})}\leq 1,\quad \|\rho'''\|_{L^\infty(\mathbb{R})}\leq 2
\end{equation*}
\end{lemma}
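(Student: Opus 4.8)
The plan is to reduce everything to a single elementary observation: since $\rho=\tanh$ satisfies the Riccati identity $\rho'=1-\rho^2$, every derivative of $\rho$ can be written as a polynomial in $\rho$ itself. Writing $t:=\rho(x)$, which ranges over the open interval $(-1,1)$ as $x$ ranges over $\mathbb{R}$, each $L^\infty(\mathbb{R})$ bound becomes the problem of maximizing an explicit polynomial in $t$ over $(-1,1)$, which is then dispatched by elementary calculus. The advantage of this route is that it keeps all supremum computations on the \emph{bounded} parameter $t$, avoiding direct manipulation of $\mathrm{sech}$ and $\tanh$.

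First I would record the bound on $\rho$ itself: since $\tanh(x)\in(-1,1)$ for every $x$ and tends to $\pm1$ as $x\to\pm\infty$, we get $\|\rho\|_{L^\infty(\mathbb{R})}\le1$ at once. For the first derivative I would use $\rho'=1-\rho^2=1-t^2$, which lies in $(0,1]$ with maximum $1$ attained at $t=0$; hence $\|\rho'\|_{L^\infty(\mathbb{R})}\le1$. Then I would differentiate successively using the identity and the chain rule, obtaining $\rho''=-2\rho(1-\rho^2)=-2t(1-t^2)$ and $\rho'''=(-2+6\rho^2)(1-\rho^2)=-6t^4+8t^2-2$.

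It remains to maximize the absolute values of these two polynomials over $(-1,1)$. For $\rho''$ I would set the $t$-derivative of $-2t(1-t^2)$ to zero, finding critical points at $t^2=\tfrac13$ with value $\tfrac{4}{3\sqrt{3}}<1$, so $\|\rho''\|_{L^\infty(\mathbb{R})}\le1$. For $\rho'''$ I would substitute $s:=t^2\in[0,1)$ and maximize $|{-6s^2+8s-2}|$: the interior critical point $s=\tfrac23$ gives value $\tfrac23$, while the endpoint $s=0$ (i.e.\ $x=0$) gives $|-2|=2$; hence $\|\rho'''\|_{L^\infty(\mathbb{R})}\le2$.

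There is no genuine obstacle here—the result is a direct computation, and the only bookkeeping lies in differentiating via the Riccati identity rather than expanding hyperbolic functions. It is worth flagging that this approach also makes transparent that the constants $1,1,1,2$ are essentially sharp, the last being attained exactly at $x=0$.
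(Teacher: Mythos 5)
Your proposal is correct and takes essentially the same route as the paper: both exploit the identity $\rho'=1-\rho^2$ to express $\rho''=-2\rho(1-\rho^2)$ and $\rho'''=(6\rho^2-2)(1-\rho^2)$ as polynomials in $\rho$ and then bound these on $(-1,1)$. You simply make explicit the elementary maximizations (e.g.\ $\max_{|t|\le 1}2|t|(1-t^2)=\tfrac{4}{3\sqrt{3}}<1$ and $\sup_{|t|<1}|6t^4-8t^2+2|=2$ at $t=0$) that the paper compresses into ``the desired assertions follow directly.''
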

\begin{proof}
Clearly $\|\rho\|_{L^\infty(\mathbb{R})}\leq 1$. Next, using the definition of $\rho$, direct computation gives
\begin{align*}
    \rho'(x) %&= \frac{(e^x+e^{-x})^2-(e^x-e^{-x})^2}{(e^x+e^{-x})}
    =1-\rho^2(x),\quad
    \rho''(x)  = -2\rho(x)(1-\rho^2(x)),\quad
    \rho'''(x)=(6\rho^2(x)-2)(1-\rho^2(x)).
\end{align*}
Thus the desired assertions follow directly.
\end{proof}
\subsection{Galerkin FEM}

In the standard Galerkin FEM \cite{Thomee:2006}, we divide the domain $\Omega$ into a quasi-uniform simplicial triangulation $\mathcal{T}_h$ with a mesh size $h$. Over $\mathcal{T}_h$, we define a conforming piecewise linear finite element space $X_h\subset H_0^1(\Omega)$ by
\begin{equation*}
X_{h}:=\{v_{h}\in H_0^1(\Omega):	v_{h}|_{K}\in P_1(K) ,\,\,\, \forall K\in\mathcal{T}_h\},
\end{equation*}
where $P_1(K)$ denotes the set of linear polynomials on the element $K$.
On the finite element space $X_{h}$,
we define the standard $L^2(\Omega)$-projection $P_h:L^2(\Omega)\to X_h$ by
\begin{equation*}
	(P_hv,\varphi_h)=(v,\varphi_h), \quad \forall v\in L^2(\Omega),\  \varphi_h\in X_h.
\end{equation*}
Then the operator $P_h$ is stable in both $L^2(\Omega)$ and $H_0^1(\Omega)$, and further the following approximation result holds \cite[Theorems 3.2 and 3.4]{BrambleXu:1991}: for $s=1,2$
\begin{equation}\label{inequ: L2 proj approx}
	\|v-P_hv\|_{L^2(\Omega)} +h \|\nabla(v-P_hv)\|_{L^2(\Omega)}\leq  ch^{s}\|v\|_{H^s(\Omega)}, \quad \forall v\in H^{s}(\Omega)\cap H_0^{1}(\Omega).
\end{equation}

\section{Elliptic inverse problem}\label{sec:elliptic}
Now we develop and analyze a novel hybrid NN-FEM approximation for the elliptic inverse problem.

\subsection{The regularized problem and its hybrid approximation}

To recover the diffusion coefficient $q$, we employ the standard regularized output least-squares formulation with an $H^1(\Omega)$ seminorm penalty, which amounts to minimizing the following objective:
\begin{equation}\label{equ:Tikhonov problem in ellip}
	\min_{q\in \mathcal{A}} J_{\gamma}(q)=\frac{1}{2}\|u(q)-z^{\delta}\|^2_{L^2(\Omega)}+\frac{\gamma}{2}\|\nabla q\|_{L^2(\Omega)}^2,
\end{equation}
with $u\equiv u(q)\in H^1_0(\Omega)$ subject to the following PDE constraint
\begin{equation}\label{equ:vari problem in ellip}	(q\nabla u,\nabla\varphi)=(f,\varphi), \quad\forall \varphi\in H^1_0(\Omega).
\end{equation}
A standard argument in calculus of variation shows the well-posedness of the regularized problem \eqref{equ:Tikhonov problem in ellip}-\eqref{equ:vari problem in ellip}: for any fixed $\gamma>0$, it has at least one global minimizer $q_\gamma^\delta$, which depends continuously on the data \cite{EnglKunischNeubauer:1989,ito2014inverse}. Moreover, as the noise level $\delta\to 0^+$, the sequence $\{q_\gamma^\delta\}_{\delta>0}$ of minimizers  converges to the exact coefficient $q^\dag$ in $H^1(\Omega)$, if the regularization parameter $\gamma$ is chosen properly in accordance with $\delta$ \cite{EnglKunischNeubauer:1989,ito2014inverse}.
In practice, the regularized problem has to be properly discretized, and this is often achieved using finite element / finite difference methods \cite{Richter:1981,Falk:1983,Zou:1998,HaoQuyen:2011}.

In this work, we employ an alternative discretization strategy: we approximate the coefficient $q$ using NNs, and the state $u$ using the Galerkin FEM. Note that NNs are globally defined, unlike compactly supported FEM basis functions. Hence, it is challenging to impose the box constraint of the admissible set $\mathcal{A}$ directly.
In order to preserve the box constraint of $\mathcal{A}$, we apply to the NN output a cutoff operation $P_{\mathcal{A}}: H^1(\Omega)\rightarrow \mathcal{A}$ defined by
\begin{equation}\label{eqn:P01}
\P01(v) = \min(\max(c_0,v),c_1).
\end{equation}
The operator $P_\mathcal{A}$ is stable in the following sense \cite[Corollary 2.1.8]{Ziemer:1989}
\begin{equation}\label{eqn:P01-stab}
\| \nabla \P01(v) \|_{L^p(\Omega)} \le \|\nabla v \|_{L^p(\Omega)},\quad \forall v \in W^{1,p}(\Omega),p\in[1,\infty],
\end{equation}
and moreover, for all $v\in \mathcal{A}$, there holds
\begin{equation}\label{eqn:P01-approx}
\|  \P01(w) - v \|_{L^p(\Omega)} \le \| w - v \|_{L^p(\Omega)},\quad \forall w \in L^{p}(\Omega),~p\in[1,\infty].
\end{equation}

Now we can formulate the hybrid NN-FEM approximation scheme as
\begin{equation}\label{equ:dis-min-ellip}
	\min_{\theta\in\mathfrak{P}_{p,\epsilon}} J_{\gamma,h}(q_\theta)=\frac{1}{2}\|u_h\big(\P01(q_\theta)\big)-z^{\delta}\|^2_{L^2(\Omega)}+\frac{\gamma}{2}\|\nabla q_\theta\|_{L^2(\Omega)}^2,
\end{equation}
where the discrete state $u_h\equiv u_h(\P01(q_\theta))\in X_h$ satisfies the following discrete variational problem
\begin{equation}\label{equ:dis-vari-ellip}	
	(\P01(q_\theta)\nabla u_h,\nabla\varphi_h)=(f,\varphi_h), \quad\forall \varphi_h\in X_h.
\end{equation}
The well-posedness of problem \eqref{equ:dis-min-ellip}-\eqref{equ:dis-vari-ellip} holds trivially true. Indeed, the uniform boundedness of the admissible set $\mathfrak{P}_{p,\epsilon}$ in a finite-dimension space implies the compactness of the parametrization set. Together with the continuity of the discrete forward map, the existence of a minimizer $\theta^*$ to problem \eqref{equ:dis-min-ellip}--\eqref{equ:dis-vari-ellip} follows by a standard argument. We denote its NN realization by $q_\theta^*$.

The hybrid formulation \eqref{equ:dis-min-ellip}-\eqref{equ:dis-vari-ellip} enjoys the following distinct features. First, the construction naturally preserves the box constraint, which is highly nontrivial to impose on the NN functions directly; Second, the resulting objective $J_{\gamma,h}(q_\theta)$ is differentiable with respect to the NN parameters $\theta$, which facilitates the training process by gradient type methods; Third, it is amenable with rigorous convergence analysis, i.e., \textsl{a priori} error estimates. In sum, it enjoys both rigorous mathematical foundation of the FEM and excellent inductive bias / approximation properties of NNs.

\begin{remark}
The formulation \eqref{equ:dis-min-ellip}--\eqref{equ:dis-vari-ellip} includes the operator $P_\mathcal{A}$, and uses $P_\mathcal{A}(q_\theta)$ to approximate the exact one $q^\dag$. It differs from the existing ones. Berg and Nystr\"{o}m \cite{berg2021neural} suggested the objective
\begin{equation*}
   \min_{\theta\in\mathfrak{P}_\epsilon} J_{\gamma,h}(q_\theta)=\frac{1}{2}\|u_h\big(q_\theta\big)-z^{\delta}\|^2_{L^2(\Omega)}+\frac{\gamma}{2}\|q_\theta\|_{L^2(\Omega)}^2,
\end{equation*}
where $\gamma\geq0$ is the regularization parameter. Their numerical evaluation focuses on $\gamma=0$, i.e., unregularized case, which necessitates the use of tiny NNs for approximating $q$, in order to avoid overfitting. The well-posedness of this formulation remains unclear, due to a lack of the box constraint. In addition, even assuming the box constraint, the $L^2(\Omega)$ penalty induces only very weak compactness and greatly complicates the mathematical analysis: the existence of a minimizer is only ensured in the sense of $H$-convergence and the minimizer might be matrix-valued \cite{DeckelnickHinze,LiuJinLu:2022}. Mitusch et al \cite{mitusch2021hybrid} suggested including an $H^1(\Omega)$ penalty to stabilize the training process.
Note that one should not apply the projection $P_\mathcal{A}$ in the penalty term, in order to preserve the differentiability of the objective.
\end{remark}

\subsection{Error analysis}

Now we derive (weighted) $L^2(\Omega)$ error estimates of the approximation $\P01(q^*_\theta)$. Under Assumption \ref{assum: ellip}, the solution $u^\dag\equiv u(q^\dag)$ to \eqref{equ:elliptic problem} satisfies $u^\dag\in H^2(\Omega)\cap H_0^1(\Omega)\cap W^{1,\infty}(\Omega)$ \cite[Lemma 2.1]{li2017maximal}.

\begin{assumption}\label{assum: ellip}
$f\in L^{\infty}(\Omega)$, and $q^\dag\in W^{2,p}(\Omega)\cap \mathcal{A}$ for some $p\ge \max(2,d+\mu)$ with $\mu>0$.
\end{assumption}

The next lemma gives the existence of an approximant in the admissible set $\mathfrak{P}_{p,\epsilon}$.
\begin{lemma}\label{lem:uq-uh}
Let Assumption \ref{assum: ellip} hold. Then for any $\epsilon>0$, there exists $\theta_\epsilon\in\mathfrak{P}_{p,\epsilon}$ such that
\begin{equation*}
	\|u^\dag-u_h\big(\P01(q_{\theta_\epsilon})\big) \|_{L^2(\Omega)}\leq c(h^2+\epsilon).
\end{equation*}
\end{lemma}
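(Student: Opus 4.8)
The plan is to insert the intermediate discrete state $u_h(q^\dag)$ computed with the \emph{exact} coefficient and split the error by the triangle inequality,
\[
\|u^\dag - u_h(\P01(q_{\theta_\epsilon}))\|_{L^2(\Omega)} \le \underbrace{\|u^\dag - u_h(q^\dag)\|_{L^2(\Omega)}}_{\textrm{(I)}} + \underbrace{\|u_h(q^\dag) - u_h(\P01(q_{\theta_\epsilon}))\|_{L^2(\Omega)}}_{\textrm{(II)}},
\]
so that (I) is a pure Galerkin discretization error for a fixed smooth coefficient, whereas (II) measures the sensitivity of the discrete forward map to a perturbation of the coefficient. I would first produce $\theta_\epsilon$ by applying Lemma \ref{lem:tanh-approx} to $q^\dag\in W^{2,p}(\Omega)$ with $s=1$ and $k=2$, giving $\|q^\dag-q_{\theta_\epsilon}\|_{W^{1,p}(\Omega)}\le\epsilon$; a direct comparison of the depth, sparsity and weight-bound exponents then confirms $\theta_\epsilon\in\mathfrak{P}_{p,\epsilon}$.

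For term (I), note that $q^\dag\in W^{2,p}(\Omega)$ with $p\ge\max(2,d+\mu)>d$ embeds into $W^{1,\infty}(\Omega)$, so on the convex domain $\Omega$ the adjoint problem $-\nabla\cdot(q^\dag\nabla w)=g$ enjoys full $H^2(\Omega)$ regularity. A standard Céa estimate combined with the duality (Aubin--Nitsche) argument and the approximation property \eqref{inequ: L2 proj approx} then yields $\|u^\dag-u_h(q^\dag)\|_{L^2(\Omega)}\le c h^2\|u^\dag\|_{H^2(\Omega)}\le c h^2$, where $u^\dag\in H^2(\Omega)$ follows from Assumption \ref{assum: ellip}.

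For term (II), set $e_h=u_h(q^\dag)-u_h(\P01(q_{\theta_\epsilon}))\in X_h$. Subtracting the two copies of the discrete problem \eqref{equ:dis-vari-ellip} gives $(\P01(q_{\theta_\epsilon})\nabla e_h,\nabla\varphi_h)=((\P01(q_{\theta_\epsilon})-q^\dag)\nabla u_h(q^\dag),\nabla\varphi_h)$ for all $\varphi_h\in X_h$. Choosing $\varphi_h=e_h$, using the coercivity $\P01(q_{\theta_\epsilon})\ge c_0$, Hölder's inequality, and the energy bound $\|\nabla u_h(q^\dag)\|_{L^2(\Omega)}\le c\|f\|_{L^2(\Omega)}\le c$, I obtain $\|\nabla e_h\|_{L^2(\Omega)}\le c\|q^\dag-\P01(q_{\theta_\epsilon})\|_{L^\infty(\Omega)}$. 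The projection bound \eqref{eqn:P01-approx} (with $p=\infty$ and $q^\dag\in\mathcal{A}$) followed by the Sobolev embedding $W^{1,p}(\Omega)\hookrightarrow L^\infty(\Omega)$ gives $\|q^\dag-\P01(q_{\theta_\epsilon})\|_{L^\infty(\Omega)}\le\|q^\dag-q_{\theta_\epsilon}\|_{L^\infty(\Omega)}\le c\|q^\dag-q_{\theta_\epsilon}\|_{W^{1,p}(\Omega)}\le c\epsilon$, and a Poincaré inequality on $e_h\in H_0^1(\Omega)$ upgrades this to $\|e_h\|_{L^2(\Omega)}\le c\|\nabla e_h\|_{L^2(\Omega)}\le c\epsilon$. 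Adding (I) and (II) yields the claim.

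The crux is term (II): the whole argument hinges on recognizing that the $W^{1,p}$ approximation supplied by $\mathfrak{P}_{p,\epsilon}$ with $p>d$ is precisely what is needed for the Sobolev embedding to convert the neural-network error into an $L^\infty(\Omega)$ bound on the coefficient, while the projection $\P01$ neither enlarges this error (by \eqref{eqn:P01-approx}) nor spoils coercivity. This $L^\infty$ control is exactly what lets me retain $\nabla u_h(q^\dag)$ in $L^2(\Omega)$ only, thereby sidestepping any delicate $W^{1,\infty}$ finite element stability estimate.
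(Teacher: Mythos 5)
Your proof is correct and follows essentially the same route as the paper: the same triangle-inequality split through $u_h(q^\dag)$, the same construction of $\theta_\epsilon$ via Lemma \ref{lem:tanh-approx} with the Sobolev embedding $W^{1,p}(\Omega)\hookrightarrow L^\infty(\Omega)$ and the projection bound \eqref{eqn:P01-approx}, and the identical discrete perturbation/energy argument with $\varphi_h=e_h$, coercivity and Poincar\'{e}. The only difference is that you spell out the Aubin--Nitsche duality argument for $\|u^\dag-u_h(q^\dag)\|_{L^2(\Omega)}\le ch^2$, which the paper simply invokes as the standard \textsl{a priori} estimate.
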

\begin{proof}
By the choice of $p$, $W^{1,p}(\Omega)$ continuously embeds into  $L^\infty(\Omega)$ \cite[Theorem 4.12, p. 85]{AdamsFournier:2003}.
Since $q^\dag \in W^{2,p}(\Omega)$, by Lemma \ref{lem:tanh-approx}, there exists  $\theta_\epsilon\in \mathfrak{P}_{p,\epsilon}$ such that its NN realization $q_{\theta_\epsilon}$ satisfies
\begin{equation}\label{eqn:qeps-W1p}
\|q^\dagger-q_{\theta_\epsilon}\|_{H^{1}(\Omega)}+\|q^\dagger-q_{\theta_\epsilon}\|_{L^{\infty}(\Omega)} \le c\|q^\dagger-q_{\theta_\epsilon}\|_{W^{1,p}(\Omega)} \leq c\epsilon.
\end{equation}
Then by the stability estimate \eqref{eqn:P01-approx} of the operator $\P01$, we deduce
\begin{equation}\label{eqn:qeps-W1p-1}
 \|q^\dagger-\P01(q_{\theta_\epsilon})\|_{L^{\infty}(\Omega)} \leq c\epsilon.
\end{equation}
Next we bound $\varrho_h:= u_h(\P01(q_{\theta_\epsilon}))-u_h(q^\dag)\in X_h$. It follows from the weak formulations of $u_h(\P01(q_{\theta_\epsilon}))$ and $u_h(q^\dag)$, cf. \eqref{equ:dis-vari-ellip}, and H\"{o}lder's inequality that for any $\varphi_h\in X_h$,
\begin{align*}
(\P01(q_{\theta_\epsilon})\nabla \varrho_h,\nabla\varphi_h) &= \big((q^\dag-\P01(q_{\theta_\epsilon}))\nabla u_h(q^\dag),\nabla \varphi_h \big)
\leq\|q^\dagger-\P01(q_{\theta_\epsilon})\|_{L^\infty(\Omega)}\|\nabla u_h(q^\dagger)\|_{L^2(\Omega)}\|\nabla \varphi_h\|_{L^2(\Omega)}.
\end{align*}
Next we set $\varphi_h=\varrho_h$ in the inequality. Upon noting $\P01(q_{\theta_\epsilon}) \in \mathcal{A}$, by the approximation property  \eqref{eqn:qeps-W1p-1}, Poinc{a}r\'e inequality, H\"{o}lder's inequality and the estimate $\|\nabla u_h(q^\dagger)\|_{L^2(\Omega)}\leq c \| f \|_{L^2(\Omega)}$. we obtain
\begin{equation*}
    \|w_h\|_{L^2(\Omega)}\leq c\|\nabla w_h\|_{L^2(\Omega)}\leq c\|q^\dagger-\P01(q_{\theta_\epsilon})\|_{L^\infty(\Omega)}\|\nabla u_h(q^\dagger)\|_{L^2(\Omega)}\leq c\epsilon.
\end{equation*}
This and the standard \textsl{a priori} error estimate
$\|u^\dag-u_h(q^\dag)\|_{L^2(\Omega)}\leq ch^2$
yield the desired estimate.
\end{proof}

The next lemma gives crucial \textit{a priori} bounds on $\|u^\dag-u_h(\P01(q^*_\theta))\|_{L^2(\Omega)}$ and $\| \nabla \P01(q_\theta^*)\|_{L^2(\Omega)}$.
\begin{lemma}\label{lem:pri grad qN*}
Let Assumption \ref{assum: ellip} hold. For any $\epsilon>0$, let $\theta^*\in\mathfrak{P}_{p,\epsilon}$ be a minimizer to problem \eqref{equ:dis-min-ellip}-\eqref{equ:dis-vari-ellip}. Then the following estimate holds
\begin{equation*}
\|u^\dag-u_h(\P01(q^*_\theta))\|^2_{L^2(\Omega)}+\gamma\|  \nabla \P01(q_\theta^*)\|^2_{L^2(\Omega)} \leq c(h^4+\epsilon^2+\delta^2+\gamma).
\end{equation*}
\end{lemma}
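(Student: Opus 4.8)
The plan is to run the standard comparison (minimality) argument from Tikhonov regularization theory, transplanted to the hybrid discrete setting. Since $\theta^*$ minimizes $J_{\gamma,h}$ over $\mathfrak{P}_{p,\epsilon}$, for the specific approximant $\theta_\epsilon\in\mathfrak{P}_{p,\epsilon}$ furnished by Lemma \ref{lem:uq-uh} we have the crucial inequality $J_{\gamma,h}(q_{\theta^*})\le J_{\gamma,h}(q_{\theta_\epsilon})$. The whole proof then reduces to (i) bounding the right-hand side $J_{\gamma,h}(q_{\theta_\epsilon})$ from above by $c(h^4+\epsilon^2+\delta^2+\gamma)$, and (ii) extracting the two target quantities from $J_{\gamma,h}(q_{\theta^*})$ on the left, each of which is controlled by $2J_{\gamma,h}(q_{\theta^*})$ up to a single gradient-projection step.

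First I would estimate the competitor value $J_{\gamma,h}(q_{\theta_\epsilon})$. For the data-fitting part I insert $u^\dag$ and use the triangle inequality together with the data accuracy $\delta=\|u(q^\dag)-z^\delta\|_{L^2(\Omega)}$ and the approximation bound of Lemma \ref{lem:uq-uh}, so that $\|u_h(\P01(q_{\theta_\epsilon}))-z^\delta\|_{L^2(\Omega)}\le \|u^\dag-u_h(\P01(q_{\theta_\epsilon}))\|_{L^2(\Omega)}+\|u^\dag-z^\delta\|_{L^2(\Omega)}\le c(h^2+\epsilon)+\delta$, which after squaring contributes $c(h^4+\epsilon^2+\delta^2)$. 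For the penalty part, the $H^1(\Omega)$ bound $\|q^\dag-q_{\theta_\epsilon}\|_{H^1(\Omega)}\le c\epsilon$ from \eqref{eqn:qeps-W1p} yields $\|\nabla q_{\theta_\epsilon}\|_{L^2(\Omega)}\le \|\nabla q^\dag\|_{L^2(\Omega)}+c\epsilon\le c$, so the term $\frac{\gamma}{2}\|\nabla q_{\theta_\epsilon}\|_{L^2(\Omega)}^2$ is bounded by $c\gamma$. Adding the two contributions bounds $J_{\gamma,h}(q_{\theta_\epsilon})$, hence $J_{\gamma,h}(q_{\theta^*})$, by $c(h^4+\epsilon^2+\delta^2+\gamma)$.

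Next I would read off the two desired quantities from the left-hand side. Since $\frac12\|u_h(\P01(q_{\theta^*}))-z^\delta\|_{L^2(\Omega)}^2\le J_{\gamma,h}(q_{\theta^*})$, one more triangle inequality with the accuracy $\delta$ converts the bound on $\|u_h(\P01(q_{\theta^*}))-z^\delta\|_{L^2(\Omega)}$ into the claimed bound on $\|u^\dag-u_h(\P01(q_{\theta^*}))\|_{L^2(\Omega)}$. For the gradient term, $\frac{\gamma}{2}\|\nabla q_{\theta^*}\|_{L^2(\Omega)}^2\le J_{\gamma,h}(q_{\theta^*})$ controls $\gamma\|\nabla q_{\theta^*}\|_{L^2(\Omega)}^2$; the point worth stressing is that the penalty in \eqref{equ:dis-min-ellip} acts on the raw NN output $q_{\theta^*}$, whereas the lemma asserts a bound on $\|\nabla \P01(q_{\theta^*})\|_{L^2(\Omega)}$. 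The bridge is precisely the gradient-contraction stability \eqref{eqn:P01-stab} of the cutoff operator, $\|\nabla \P01(q_{\theta^*})\|_{L^2(\Omega)}\le\|\nabla q_{\theta^*}\|_{L^2(\Omega)}$, which transfers the bound to the projected coefficient at no cost. Summing the two estimates gives the assertion.

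The argument is almost entirely mechanical, and the only genuinely delicate points are bookkeeping ones. The main thing to get right is the interplay between the penalty on $q_{\theta^*}$ and the claimed bound on $\P01(q_{\theta^*})$, which relies essentially on the contraction property \eqref{eqn:P01-stab} of $\P01$ (and is exactly why the preceding remark insists on \emph{not} projecting inside the penalty, so as to keep the objective differentiable while still controlling the projected gradient). A secondary care is to keep all constants independent of $\gamma$: the $c\gamma$ arising from the competitor penalty must be a genuine $\mathcal{O}(\gamma)$ term rather than $\mathcal{O}(1)$, which is guaranteed by the uniform $H^1(\Omega)$ bound on $q_{\theta_\epsilon}$ above, itself a consequence of the smoothness approximation in Lemma \ref{lem:tanh-approx}.
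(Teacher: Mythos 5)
Your proposal is correct and follows essentially the same route as the paper's proof: the minimizing property against the competitor $\theta_\epsilon$ from Lemma \ref{lem:uq-uh}, a triangle inequality with the data accuracy $\delta$ on both sides of the comparison, the uniform $H^1(\Omega)$ bound on $q_{\theta_\epsilon}$ from \eqref{eqn:qeps-W1p} to make the penalty contribution a genuine $\mathcal{O}(\gamma)$ term, and finally the stability \eqref{eqn:P01-stab} of $\P01$ to transfer the gradient bound from $q_\theta^*$ to $\P01(q_\theta^*)$. Your closing observations on why the penalty acts on the raw NN output rather than the projection match the paper's remark exactly.
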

\begin{proof}
Let $q_{\theta_\epsilon}$ be the NN realization of $\theta_\epsilon\in \mathfrak{P}_{p,\epsilon}$ satisfying the estimate \eqref{eqn:qeps-W1p}, which also implies $\|q_{\theta_\epsilon}\|_{H^1(\Omega)}\le c$. Then Lemma \ref{lem:uq-uh} and the minimizing property of $q_\theta^*$, i.e., $
J_{\gamma,h}(q^*_\theta)\leq  J_{\gamma,h}(q_{\theta_\epsilon}),$
yield
\begin{align*}
&\|u_h(\P01(q^*_\theta))-z^\delta\|^2_{L^2\II}+\gamma\|\nabla q_\theta^*\|^2_{L^2\II}\leq\|u_h(\P01(q_{\theta_\epsilon}))-z^\delta\|^2_{L^2\II}+\gamma\|\nabla q_{\theta_\epsilon}\|_{L^2\II}^2 \\ \leq  & c\big(\|u_h(\P01(q_{\theta_\epsilon}))- u^\dagger\|^2_{L^2(\Omega)}+\|u^\dagger-z^\delta\|^2_{L^2(\Omega)}+\gamma\big)\leq c(h^4+\epsilon^2+\delta^2+\gamma).
\end{align*}
Applying the triangle inequality leads to
\begin{align*}
\|u^\dag-&u_h(\P01(q^*_\theta))\|^2_{L^2(\Omega)}+\gamma\|\nabla q_\theta^*\|_{L^2(\Omega)}^2
\leq c\|u^\dag-z^\delta\|^2_{L^2(\Omega)}\\
&+c\|z^\delta-u_h(\P01(q^*_\theta))\|^2_{L^2(\Omega)}+\gamma\|\nabla q_{\theta}^*\|_{L^2(\Omega)}^2
\leq c(h^4+\epsilon^2+\delta^2+\gamma).
\end{align*}
Finally, the bound on $\| \nabla \P01(q_\theta^*) \|_{L^2(\Omega)}$ follows from  \eqref{eqn:P01-stab} and the constraint $\P01(q_\theta^*)\in\mathcal{A}$.
\end{proof}

To derive an \textsl{a priori} estimate for $\P01(q_\theta^*)$, we use the following positivity condition: for some $\beta\geq0$,
\begin{equation}\label{Cond: P}
	q^\dag |\nabla u^\dag|^2+fu^\dag\geq c\ {\rm dist}(x,\partial\Omega)^\beta.
\end{equation}
Bonito et al proved that  condition \eqref{Cond: P} holds with $\beta=2$ if $\Omega$ is a Lipschitz domain, $q^\dagger\in\mathcal{A}$ and $f\in  L^2(\Omega)$ with $f>c_f$ for some $c_f>0$ \cite[Lemma 3.7]{bonito2017diffusion}, and with $\beta=0$ if $q^\dagger\in C^{1,\alpha}(\overline\Omega )\cap\mathcal{A}$, and $f\in C^{0,\alpha}(\overline\Omega)$ and $f\geq c_f>0$ on a $C^{2,\alpha}$ domain $\Omega$ for some $\alpha>0$ \cite[Lemma 3.3]{bonito2017diffusion}.

\begin{theorem}\label{thm:error-ellip}
Let Assumption \ref{assum: ellip}  hold. For any $\epsilon>0$, let $\theta^*\in\mathfrak{P}_{p,\epsilon}$ be a minimizer to problem \eqref{equ:dis-min-ellip}-\eqref{equ:dis-vari-ellip}, with $q^*_\theta$ its NN realization. Then with $\eta^2:=h^4+\epsilon^2+\delta^2+\gamma$, there holds
\begin{equation*}
\int_{\Omega}\Big(\frac{q^\dag-\P01(q_\theta^*)}{q^\dag}\Big)^2\big(q^\dag |\nabla u^\dag|^2+fu^\dag\big)\ \mathrm{d}x\leq c(\min(h^{-1}\eta+h,1)+h\gamma^{-\frac12}\eta)\gamma^{-\frac12}\eta.
\end{equation*}
Moreover, if condition \eqref{Cond: P} holds, then
\begin{equation*}
\|q^\dagger-\P01(q_\theta^*)\|_{L^2(\Omega)}\leq c\big[(\min(h^{-1}\eta+h,1)+h\gamma^{-\frac12}\eta)\gamma^{-\frac12}\eta\big]^\frac{1}{2(1+\beta)}.
\end{equation*}
\end{theorem}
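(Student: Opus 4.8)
The plan is to establish the two displayed inequalities in turn: first the degenerate weighted bound on
\[
I := \int_{\Omega}\Big(\frac{q^\dag-\P01(q_\theta^*)}{q^\dag}\Big)^2\big(q^\dag |\nabla u^\dag|^2+fu^\dag\big)\ \d x ,
\]
and then, invoking the positivity condition \eqref{Cond: P}, to upgrade it to a genuine $L^2(\Omega)$ bound via a weighted interpolation argument. Throughout I write $q^*:=\P01(q_\theta^*)\in\mathcal{A}$, $w:=(q^\dag-q^*)/q^\dag$ (so that $q^\dag-q^*=q^\dag w$ and $|w|\le c$ pointwise since $q^\dag,q^*\in[c_0,c_1]$), and $e:=u^\dag-u_h(q^*)$ for the state error. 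The first step is to derive an \emph{error identity}. Using $f=-\nabla\cdot(q^\dag\nabla u^\dag)$ and integrating by parts (the boundary term vanishes since $u^\dag=0$ on $\partial\Omega$), i.e. testing \eqref{equ:vari problem in ellip} with $w^2u^\dag$, yields $I=2\int_\Omega w^2q^\dag|\nabla u^\dag|^2\,\d x+2\int_\Omega q^\dag\,w\,u^\dag\,\nabla w\cdot\nabla u^\dag\,\d x$. On the other hand, subtracting \eqref{equ:dis-vari-ellip} for $u_h(q^*)$ from \eqref{equ:vari problem in ellip} for $u^\dag$ gives the Galerkin-type relation $((q^\dag-q^*)\nabla u^\dag,\nabla\varphi_h)=-(q^*\nabla e,\nabla\varphi_h)$ for all $\varphi_h\in X_h$. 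Taking $\psi:=w\,u^\dag$ and $\psi_h:=P_h\psi$, expanding $\nabla\psi=w\nabla u^\dag+u^\dag\nabla w$ and using $(q^\dag-q^*)w=q^\dag w^2$, one checks that $((q^\dag-q^*)\nabla u^\dag,\nabla\psi)$ equals exactly $\tfrac12 I$; hence
\[
I = -2\big(q^*\nabla e,\nabla\psi_h\big)+2\big((q^\dag-q^*)\nabla u^\dag,\nabla(\psi-\psi_h)\big)=:T_1+T_2 .
\]
This eliminates the cross term and reduces the whole estimate to controlling the state-error pairing $T_1$ and the nonconforming consistency remainder $T_2$.

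The two quantitative ingredients are: (a) a gradient bound on the state error, obtained by splitting $e=(u^\dag-u_h(q^\dag))+(u_h(q^\dag)-u_h(q^*))$, bounding the first (conforming) part by the standard FEM estimate $\|\nabla(u^\dag-u_h(q^\dag))\|_{L^2(\Omega)}\le ch$ and the second (discrete) part by an inverse inequality together with the $L^2(\Omega)$ bound of Lemma \ref{lem:pri grad qN*}, giving $\|\nabla e\|_{L^2(\Omega)}\le c\min(h^{-1}\eta+h,1)$; and (b) the regularisation bound $\|\nabla q^*\|_{L^2(\Omega)}\le c\gamma^{-1/2}\eta$ from Lemma \ref{lem:pri grad qN*}. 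To bound $T_1$ I would integrate by parts element-wise, using that $\nabla\psi_h$ is piecewise constant so $\nabla\cdot(q^*\nabla\psi_h)=\nabla q^*\cdot\nabla\psi_h$ on each simplex and that $e$ is single-valued across interelement faces; the volume contribution is then controlled through $\|e\|_{L^2(\Omega)}\le c\eta$ and $\|\nabla q^*\|_{L^2(\Omega)}\le c\gamma^{-1/2}\eta$, while the face-jump contributions together with $T_2$ are handled through the stability and approximation properties of $P_h$ in \eqref{inequ: L2 proj approx} and the $W^{1,\infty}(\Omega)$ regularity of $u^\dag$. This is precisely where the overall scale $\gamma^{-1/2}\eta$ (from $\nabla q^*$) and the extra factor $h$ (from the projection error of $\psi$, producing the term $h\gamma^{-1/2}\eta$) arise, and combining the estimates delivers the first displayed bound $I\le c\big(\min(h^{-1}\eta+h,1)+h\gamma^{-1/2}\eta\big)\gamma^{-1/2}\eta=:cM$.

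For the second inequality I would use that under \eqref{Cond: P} one has $q^\dag|\nabla u^\dag|^2+fu^\dag\ge c\,\mathrm{dist}(x,\partial\Omega)^\beta$ pointwise, whence, since $w^2\ge0$ and $q^\dag\ge c_0$,
\[
\int_\Omega (q^\dag-q^*)^2\,\mathrm{dist}(x,\partial\Omega)^\beta\,\d x\le c\int_\Omega w^2\big(q^\dag|\nabla u^\dag|^2+fu^\dag\big)\,\d x=cI\le cM .
\]
To convert this degenerate-weight control into the full $L^2(\Omega)$ norm I would apply a weighted interpolation inequality of Hardy type, namely $\|g\|_{L^2(\Omega)}\le c\|g\|_{H^1(\Omega)}^{\beta/(1+\beta)}\big(\int_\Omega g^2\,\mathrm{dist}(x,\partial\Omega)^\beta\,\d x\big)^{1/(2(1+\beta))}$, to $g=q^\dag-q^*$, whose $H^1(\Omega)$ norm is uniformly bounded because $\|\nabla q^\dag\|_{L^2(\Omega)}\le c$ and $\|\nabla q^*\|_{L^2(\Omega)}\le c\gamma^{-1/2}\eta$. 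This produces the stated exponent $\tfrac{1}{2(1+\beta)}$ and finishes the argument.

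I expect the main obstacle to be the sharp estimation of $T_1$ and $T_2$. Because the natural multiplier $\psi=w\,u^\dag$ is not a finite element function, a crude Cauchy--Schwarz on the gradients would only give the weaker bound $c\min(h^{-1}\eta+h,1)$ and miss the decisive factor $\gamma^{-1/2}\eta$; recovering it requires the element-wise integration by parts above, carefully balancing the smallness of $\|e\|_{L^2(\Omega)}$ and $\|\nabla q^*\|_{L^2(\Omega)}$ against the nonconforming consistency error of $\psi_h=P_h\psi$. A secondary technical point is justifying the weighted interpolation inequality and verifying the uniform $H^1(\Omega)$ bound on $q^\dag-q^*$ that it requires.
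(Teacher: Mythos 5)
Your skeleton is the paper's: the multiplier $\varphi=\frac{q^\dag-\P01(q_\theta^*)}{q^\dag}u^\dag$, the splitting via $P_h\varphi$ into a consistency term and a state-error term (your $T_2$, $T_1$ are, up to the factor $2$ and an integration by parts, the paper's ${\rm I}$, ${\rm II}$), the identity $\big((q^\dag-\P01(q_\theta^*))\nabla u^\dag,\nabla\varphi\big)=\tfrac12 I$ (which you correctly rederive by testing \eqref{equ:vari problem in ellip} with $w^2u^\dag$ instead of citing \cite[Theorem 2.2]{bonito2017diffusion}), and the two inputs $\|\nabla\P01(q_\theta^*)\|_{L^2(\Omega)}\le c\gamma^{-1/2}\eta$ and $\|\nabla e\|_{L^2(\Omega)}\le c\min(h^{-1}\eta+h,1)$ from Lemma \ref{lem:pri grad qN*}. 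But your execution of $T_1$ contains a genuine gap, and your stated reason for it is backwards. The "crude" Cauchy--Schwarz you dismiss gives $|T_1|\le c\,\|\nabla e\|_{L^2(\Omega)}\|\nabla P_h\psi\|_{L^2(\Omega)}$, and the $H^1_0(\Omega)$-stability of $P_h$ together with \eqref{eqn:bound-dphi} yields $\|\nabla P_h\psi\|_{L^2(\Omega)}\le c\big(1+\|\nabla \P01(q_\theta^*)\|_{L^2(\Omega)}\big)\le c\gamma^{-1/2}\eta$ — the factor $\gamma^{-1/2}\eta$ you feared losing is carried by the multiplier, not by $\nabla e$, so Cauchy--Schwarz alone gives exactly the paper's bound $c\gamma^{-1/2}\eta\min(h^{-1}\eta+h,1)$ (note $\gamma^{-1/2}\eta\ge1$, so "missing" it would make the bound \emph{smaller}, not weaker). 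The element-wise integration by parts you propose instead does not close: the volume term $\int_\Omega e\,\nabla q^*\cdot\nabla\psi_h\,\d x$ is a triple product in which you control only $\|e\|_{L^2(\Omega)}$ and $\|\nabla q^*\|_{L^2(\Omega)}$; bounding it as claimed would require $\nabla\psi_h\in L^\infty(\Omega)$ (an inverse estimate costs $h^{-d/2}$) or an $L^\infty$ bound on $e$ of size $\eta$, neither of which is available. The face-jump terms likewise need trace estimates that reintroduce $\|\nabla e\|_{L^2(\Omega)}$, making the detour circular. Your $T_2$ treatment is fine once you say explicitly that you integrate by parts globally (legitimate since $\psi-P_h\psi\in H_0^1(\Omega)$ and $\nabla\cdot\big((q^\dag-\P01(q_\theta^*))\nabla u^\dag\big)\in L^2(\Omega)$ by Assumption \ref{assum: ellip}) and pair the $L^2$ projection error $ch\|\nabla\psi\|_{L^2(\Omega)}$ against the divergence bound $c\gamma^{-1/2}\eta$; this is the paper's term ${\rm I}$ giving $h\gamma^{-1}\eta^2$.

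The second stage also has a flaw. Your Hardy-type interpolation inequality is itself true on a Lipschitz domain (it follows from the very splitting $\Omega=\Omega_\rho\cup\Omega_\rho^c$ with $\int_{\Omega_\rho^c}g^2\le c\rho\|g\|_{H^1(\Omega)}^2$ and optimization in $\rho$), but you apply it under the false premise that $\|q^\dag-\P01(q_\theta^*)\|_{H^1(\Omega)}\le c$ uniformly: Lemma \ref{lem:pri grad qN*} only gives $\|\nabla\P01(q_\theta^*)\|_{L^2(\Omega)}\le c\gamma^{-1/2}\eta$, which is $O(1)$ only under calibrated parameter choices (e.g., $\gamma\sim\delta^2$), not for the arbitrary $h,\gamma,\epsilon,\delta$ for which the theorem is stated. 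As written your route yields the weaker bound with an extra factor $(\gamma^{-1/2}\eta)^{\beta/(1+\beta)}$. The fix — and the paper's actual argument — is to discard the $H^1$ norm entirely on the boundary strip and use only the box constraints $q^\dag,\P01(q_\theta^*)\in[c_0,c_1]$, which give $\|q^\dagger-\P01(q_\theta^*)\|_{L^2(\Omega^c_\rho)}^2\leq c|\Omega_\rho^c|\le c\rho$, and then optimize $\rho^{-\beta}M+\rho$ over $\rho>0$ to obtain the exponent $\frac{1}{2(1+\beta)}$. Both of your gaps are repairable, and in each case the repair is the simpler estimate the paper uses.
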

\begin{proof}
By the weak formulations of $u^\dag$ and $u_h(P_\mathcal{A}(q_\theta^*))$, cf. \eqref{equ:vari problem in ellip} and \eqref{equ:dis-vari-ellip}, for any $\varphi\in H_0^1(\Omega)$, we have
\begin{align*}
&\quad\big((q^\dag-\P01(q_\theta^*))\nabla u^\dagger,\nabla\varphi\big)
= \big ((q^\dag-\P01(q_\theta^*))\nabla u^\dagger,\nabla(\varphi-P_h\varphi)\big) + \big( (q^\dag-\P01(q_\theta^*))\nabla u^\dagger,\nabla P_h\varphi\big) \\
& = -\big(\nabla\cdot((q^\dag-\P01(q_\theta^*))\nabla u^\dagger),\varphi-P_h\varphi\big)+\big(\P01(q_\theta^*)\nabla(u_h(\P01(q_\theta^*))-u^\dagger),\nabla P_h\varphi\big)=: {\rm I} + {\rm II}.
\end{align*}
Let $\varphi\equiv\frac{q^\dag-\P01(q_\theta^*)}{q^\dag}u^\dagger$. Next we bound the terms ${\rm I}$ and ${\rm II}$ separately. Direct computation gives
\begin{equation*}
\nabla\varphi=(q^\dag)^{-1}(u^\dag\nabla(q^\dag-\P01(q^*_\theta))+(q^\dag-\P01(q_\theta^*))\nabla u^\dag)-(q^\dag)^{-2}(q^\dag-\P01(q_\theta^*))(\nabla q^\dag) u^\dag.
\end{equation*}
This identity and Assumption \ref{assum: ellip} imply $\varphi\in H_0^1(\Omega)$, and further
\begin{equation}\label{eqn:bound-dphi}
\|\nabla\varphi\|_{L^2(\Omega)}\leq c(1+\|\nabla\P01(q_\theta^*)\|_{L^2(\Omega)}).
\end{equation}
Using Assumption \ref{assum: ellip} again and Lemma \ref{lem:pri grad qN*}, we obtain
\begin{align*}
\|\nabla\cdot\big((q^\dag-&\P01(q_\theta^*))\nabla u^\dagger\big)\|_{L^2(\Omega)}
\leq\|q^\dag - \P01(q_\theta^*)\|_{L^\infty(\Omega)}\|\Delta u^\dagger\|_{L^2(\Omega)} \\
&+\|\nabla q^\dag - \nabla \P01(q_\theta^*)\|_{L^2(\Omega)}\|\nabla u^\dagger\|_{L^\infty(\Omega)}
\leq c(1+\|\nabla \P01(q_\theta^*)\|_{L^2(\Omega)})\leq c\gamma^{-\frac12}\eta.
\end{align*}
Hence, we can bound the term ${\rm I}$ by
\begin{equation*}
|{\rm I}|\leq ch(1+\|\nabla\P01(q_\theta^*)\|_{L^2(\Omega)})\|\nabla \varphi\|_{L^2(\Omega)}\leq ch(1+\|\nabla \P01(q_\theta^*)\|^2_{L^2(\Omega)})\leq ch\gamma^{-1}\eta^2.
\end{equation*}
By the Cauchy--Schwarz inequality and the estimate \eqref{eqn:bound-dphi}, we can bound the term ${\rm II}$ by
\begin{align*}
|{\rm II}|
&\leq \|\P01(q_\theta^*)\|_{L^{\infty}(\Omega)}\|\nabla(u_h(\P01(q_\theta^*))-u^\dagger)\|_{L^{2}(\Omega)}\|\nabla\varphi\|_{L^2(\Omega)}\\
&\leq c \|\nabla(u_h(\P01(q_\theta^*))-u^\dagger)\|_{L^{2}(\Omega)}\|\nabla\varphi\|_{L^2(\Omega)}\\
&\leq c\big(1+\|\nabla \P01(q_\theta^*)\|_{L^2(\Omega)}\big)\|\nabla(u_h(\P01(q_\theta^*))-u^\dagger)\|_{L^{2}\II}.
\end{align*}
Then by Lemma \ref{lem:pri grad qN*}, the inverse inequality in the space $X_h$ \cite[(1.12), p. 4]{Thomee:2006}, the approximation property \eqref{inequ: L2 proj approx} and the $L^2(\Omega)$-stability of $P_h$ and the regularity $u^\dag \in H^2(\Omega)$, we can bound the term $\rm II$ by
\begin{align*}
|{\rm II}|
&\le  c \gamma^{-\frac12}\eta\,
\big(\| \nabla(u_h(\P01(q_\theta^*))- P_h u^\dagger)  \|_{L^2(\Omega)}
+ \| \nabla(P_h u^\dagger-u^\dagger) \|_{L^2\II} \big) \\
&\le c \gamma^{-\frac12}\eta\,
\big( h^{-1} \|  u_h(\P01(q_\theta^*))  - P_h u^\dagger \|_{L^2(\Omega)}
+ h \| u^\dagger \|_{H^2\II} \big)\\
&\le c \gamma^{-\frac12}\eta\,
\big( h^{-1} \|  u_h(\P01(q_\theta^*))- u^\dagger \|_{L^2(\Omega)}
+ h \| u^\dagger \|_{H^2\II} \big) \le c  \gamma^{-\frac12}\eta\, (h^{-1}\eta +h).
\end{align*}
Further, the estimate $
\|\nabla u_h(P_{\mathcal{A}}(q_\theta^*))\|_{L^2(\Omega)}\leq c \| f \|_{L^2(\Omega)}$ and the regularity $u^\dag \in H^2(\Omega)$ imply
$\|\nabla(u_h(\P01(q_\theta^*))-u^\dagger)\|_{L^{2}\II}\leq c$. Hence,
$|{\rm II}| \le c \gamma^{-\frac12} \eta.$ Combining these estimates on ${\rm II}$ yields
\begin{align*}
|{\rm II}|
 \le c  \gamma^{-\frac12}\eta\, \min(h^{-1}\eta +h,1).
\end{align*}
Moreover, direct computation gives \cite[Theorem 2.2]{bonito2017diffusion}
\begin{equation*}
 \big((q^\dag-\P01(q_\theta^*))\nabla u^\dag,\nabla\varphi\big) = \frac12\int_{\Omega}\Big(\frac{q^\dag-\P01(q_\theta^*)}{q^\dag}\Big)^2\big(q^\dag |\nabla u^\dag|^2+fu^\dag\big)\ \mathrm{d}x.
\end{equation*}
This and the preceding bounds together show the first assertion. To bound $\|q^\dagger-\P01(q_\theta^*)\|_{L^2(\Omega)}$, we fix $\rho>0$, and divide $\Omega$ into two disjoint sets $\Omega=\Omega_\rho\cup \Omega_\rho^c$, with $\Omega_\rho=\{x\in\Omega:{\rm dist}(x,\partial\Omega)\geq\rho\}$ and $\Omega_\rho^c=\Omega\setminus\Omega_\rho$. Then by condition \eqref{Cond: P} and the box constraint $q^\dag\in \mathcal{A}$, we have
\begin{align*}
&\|q^\dagger-\P01(q_\theta^*)\|_{L^2(\Omega_\rho)}^2  = \rho^{-\beta}\int_{\Omega_\rho }(q^\dagger-\P01(q_\theta^*))^2\rho^{\beta}{\rm d}x
\leq \rho^{-\beta}\int_{\Omega_\rho }(q^\dag-\P01(q_\theta^*))^2\mathrm{dist}(x,\partial\Omega)^{\beta}{\rm d}x\\
\leq&c\rho^{-\beta}\int_{\Omega_\rho }\Big(\frac{q^\dag-\P01(q_\theta^*)}{q^\dag}\Big)^2\big(q^\dag |\nabla u^\dagger|^2+fu^\dagger\big){\rm d}x
\leq c\rho^{-\beta}(\min(h^{-1}\eta+h,1)+h\gamma^{-\frac12}\eta)\gamma^{-\frac12}\eta.
\end{align*}
Meanwhile, on the subdomain $\Omega_\rho^c$, the box constraint $P_\mathcal{A}(q_\theta^*)\in \mathcal{A}$ leads to
\begin{equation*}
\|q^\dagger-\P01(q_\theta^*)\|_{L^2(\Omega^c_\rho)}^2\leq  c|\Omega_\rho^c| \|q^\dagger-\P01(q_\theta^*)\|^{2}_{L^{\infty}(\Omega_\rho^c)}\leq c\rho.
\end{equation*}
Combining the last two estimates and then optimizing over $\rho>0$
complete the proof.
\end{proof}

\begin{remark}
Theorem \ref{thm:error-ellip} provides useful guidelines for choosing the algorithmic parameters: $\gamma=\mathcal{O}(\delta^2)$, $h=\mathcal{O}(\delta^\frac12)$ and $\epsilon=\mathcal{O}(\delta)$. Then under condition \eqref{Cond: P}, we obtain $\|q^\dag-P_\mathcal{A}(q_\theta^*)\|_{L^2(\Omega)}\leq c\delta^\frac{1}{4(1+\beta)}.$
This result is comparable with that for the purely FEM approximation \cite[Corollary 3.3]{jin2021error}.
\end{remark}

\subsection{Quadrature error analysis}

The weak formulation and objective requires evaluating various integrals. This is commonly done via a quadrature scheme. While this issue is direct for the standard FEM \cite{ciarlet2002finite}, it is nontrivial when NNs are involved: NNs are globally supported and no longer polynomials within each finite element. Thus, the use of quadrature schemes is required, and there is an inevitable quadrature error, which may influence the accuracy of the NN approximation \cite{BerroneCanuto:2022,RiveraPardo:2022}. We aim to provide a quadrature error analysis.

There are many possible quadrature rules \cite[Chapter 15]{Thomee:2006}. We focus on one simple scheme to shed useful insights. On each element $K\in \mathcal{T}_h$,
we uniformly divide it into $2^{dn}$ sub-simplexes,  denoted by $\{ K_i\}_{i=1}^{2^{dn}}$, with the uniform diameter $h_K/2^n$. The division for $d=1,2$
is trivial, and for $d=3$, it is also feasible  \cite{Ong:1994}.
Then consider the following quadrature rule over the element $K$ (with $P_j^i$ denoting the $j$th node of the $i$th sub-simplex $K_i$):
\begin{equation*}%\label{eqn:quad-local}
Q_K(v) = \sum_{i=1}^{2^{dn}} \frac{|K_i|}{d+1} \sum_{j=1}^{d+1} v(P_j^i),\quad \forall v\in C(\overline K).
\end{equation*}
The embedding $H^2\II \hookrightarrow L^\infty\II$ (for $d=1,2,3$) and Bramble--Hilbert lemma \cite[Theorem 4.1.3]{ciarlet2002finite} lead to
\begin{equation}\label{ineq: quad approx-0}
\Big|\int_{K} v \ \mathrm{d}x - Q_K(v)\Big| \leq
%\left\{\begin{aligned}
% &c |K|^{1-\frac{1}{p}} 2^{-2n} h_K  |v|_{W^{1,p}(K)},&& v\in W^{1,p}(K),\\
 c |K|^{\frac12} 2^{-2n} h_K^2  |v|_{H^2(K)}, \quad \forall v\in H^2(K), %\end{aligned}\right.
\end{equation}
Then we can define a global quadrature rule:
\begin{equation}\label{eqn:quad-global}
Q_h(v) =  \sum_{K\in \mathcal{T}_h} Q_K(v),\quad \forall v\in C(\overline \Omega),
\end{equation}
which satisfies the following error estimate
\begin{equation}\label{ineq: quad approx}
 \Big|\int_{\Omega} v \ \mathrm{d}x - Q_h(v)\Big| \leq c 2^{-2n} h^2 |v|_{H^{2}(\Omega)},\quad \forall v\in H^2(\Omega).
\end{equation}
Similarly, we define a discrete / broken $L^2(\Omega)$ inner product $(\cdot,\cdot)_h$ by
\begin{equation*}%\label{eqn:disc-inprod}
(w,v)_h := Q_h(wv) = \sum_{K\in\mathcal{T}_h}Q_K(wv),\quad \forall w,v\in C(\overline \Omega).
\end{equation*}

\begin{lemma}\label{lem:quad-error}
The following error estimate holds for any $v_h,w_h\in X_h$, and $n\in\mathbb{N}$:
\begin{align*}|(q\nabla v_h,\nabla w_h)-(q \nabla v_h,\nabla w_h)_h|\leq c (2^{-n}h)^p \|q\|_{W^{p,\infty}(\Omega)}\|\nabla v_h\|_{L^2(\Omega)}\|\nabla w_h\|_{L^2(\Omega)},\quad \mbox{with } p=1,2;
%|(f,v_h)-(f,v_h)_h|\leq c2^{-2n}h^{2} \|f\|_{W^{2,\infty}(\Omega)}\|\nabla v_h\|_{L^2(\Omega)}.
\end{align*}
%\end{enumerate}
\end{lemma}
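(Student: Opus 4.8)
The plan is to estimate the quadrature error on a single element $K$ and then sum over $\mathcal{T}_h$. On a fixed element $K$, the integrand is $w := q\, \partial_i v_h\, \partial_j w_h$ summed over the gradient components; since $v_h, w_h$ are piecewise linear, each $\partial_i v_h$ and $\partial_j w_h$ is \emph{constant} on $K$, so the only nonconstant factor is $q$ itself. This is the crucial structural simplification: the element-wise integrand is $q$ times a constant, and all the regularity of the integrand comes from $q$. I would therefore apply the local quadrature estimate \eqref{ineq: quad approx-0} with $v$ replaced by $q\,\partial_iv_h\,\partial_jw_h$.

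For the case $p=2$, I would compute $|q\,\partial_iv_h\,\partial_jw_h|_{H^2(K)}$. Because the gradient components are constant on $K$, this semi-norm equals $|\partial_iv_h|\,|\partial_jw_h|\,|q|_{H^2(K)}$, and one bounds $|q|_{H^2(K)} \le |K|^{1/2}\|q\|_{W^{2,\infty}(K)} \le |K|^{1/2}\|q\|_{W^{2,\infty}(\Omega)}$. Plugging into \eqref{ineq: quad approx-0} gives a local bound of the form $c\,|K|\,2^{-2n}h_K^2\,\|q\|_{W^{2,\infty}(\Omega)}\,|\partial_iv_h|\,|\partial_jw_h|$. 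Summing over $K$ and over the components $i,j$, I would use $\sum_K |K|\,|\partial_iv_h|\,|\partial_jw_h| = \sum_K |K|^{1/2}|\partial_iv_h|\cdot|K|^{1/2}|\partial_jw_h|$ together with the Cauchy--Schwarz inequality in the element index to recover $\|\nabla v_h\|_{L^2(\Omega)}\|\nabla w_h\|_{L^2(\Omega)}$. With $h_K \le h$ this yields the stated bound with $p=2$ and exponent $(2^{-n}h)^2$.

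For the case $p=1$ the integrand $q\,\partial_iv_h\,\partial_jw_h$ is only in $W^{1,\infty}(K)$, so \eqref{ineq: quad approx-0} (which needs $H^2$) does not apply directly. Instead I would invoke a first-order Bramble--Hilbert estimate for the same nodal rule, namely $|\int_K v\,\mathrm{d}x - Q_K(v)| \le c\,|K|^{1/2}2^{-n}h_K\,|v|_{H^1(K)}$, valid for $v\in H^1(K)$; this follows from the same scaling argument as \eqref{ineq: quad approx-0} but using that the rule is exact for constants rather than for linears. Applying it to $q\,\partial_iv_h\,\partial_jw_h$ and using $|q\,\partial_iv_h\,\partial_jw_h|_{H^1(K)} = |\partial_iv_h|\,|\partial_jw_h|\,|q|_{H^1(K)}$ with $|q|_{H^1(K)}\le |K|^{1/2}\|q\|_{W^{1,\infty}(\Omega)}$ gives the local bound, and summing as above produces the $p=1$ estimate.

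The main obstacle I anticipate is the $p=1$ case, specifically justifying the first-order quadrature estimate: the rule $Q_K$ as written averages the $d+1$ vertex values on each sub-simplex and is exact for \emph{affine} functions, so a naive Bramble--Hilbert argument would give the $H^2$/second-order bound but not obviously a clean first-order bound with the regularity that is actually available. The point to get right is that, because $q$ is only assumed $W^{1,\infty}$ (via $W^{p,\infty}$ with $p=1$), I must use the exactness-for-constants property and a scaling argument on the reference sub-simplex, tracking the factor $2^{-n}$ from the sub-simplex diameter $h_K/2^n$ correctly through the affine change of variables. Balancing the powers of $|K|$ so that the two factors $|K|^{1/2}|\partial v_h|$ combine into the two $L^2$ gradient norms after Cauchy--Schwarz is the other place where care is needed, but it is routine once the local estimate is in hand.
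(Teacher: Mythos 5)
Your $p=2$ argument is correct, and it takes a genuinely different route from the paper: you apply the Bramble--Hilbert quadrature bound \eqref{ineq: quad approx-0} directly to the integrand $q\,\partial_i v_h\,\partial_j w_h$ (exploiting that the gradients are elementwise constant), whereas the paper never invokes \eqref{ineq: quad approx-0} at all. Instead it observes that the composite rule $Q_{K_j}$ samples only nodal values and is exact on $P_1(K_j)$, so replacing $q$ by its Lagrange nodal interpolant $\Pi_{K_j}q$ leaves the quadrature value unchanged while the exact integral of $(\Pi_{K_j}q)\nabla v_h\cdot\nabla w_h$ is reproduced exactly; hence the quadrature error equals $\sum_{K}\sum_j\int_{K_j}(q-\Pi_{K_j}q)\nabla v_h\cdot\nabla w_h\,\d x$, and the standard $L^\infty$ interpolation estimate $\|q-\Pi_{K_j}q\|_{L^\infty(K_j)}\le c(2^{-n}h)^p\|q\|_{W^{p,\infty}(K_j)}$ handles $p=1$ and $p=2$ uniformly, with no quadrature-error functional on a low-regularity space ever appearing.

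Your $p=1$ case contains a genuine gap, and it is precisely at the step you flagged as the obstacle: the inequality $|\int_K v\,\d x - Q_K(v)|\le c\,|K|^{1/2}2^{-n}h_K\,|v|_{H^1(K)}$ ``valid for $v\in H^1(K)$'' is false for $d=2,3$ (and the paper allows $d\le 3$). The functional $v\mapsto Q_K(v)$ involves point evaluations, which are not bounded on $H^1(K)$ in dimension $\ge 2$; an $H^1$ function need not even be continuous, so $Q_K(v)$ is undefined on that space, and the Bramble--Hilbert lemma cannot be applied there --- it requires the error functional to be bounded on the Sobolev space in question, which here forces a space embedding into $C(\overline{K})$. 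The issue is not exactness-for-constants versus exactness-for-affines, as your obstacle paragraph suggests, but the choice of norm. The repair is easy given your structural observation: since your integrand is $q$ times elementwise constants and $q\in W^{1,\infty}(\Omega)$, run the same scaling and exactness-for-constants argument on the reference sub-simplex in $W^{1,\infty}$, obtaining $|\int_K v\,\d x - Q_K(v)|\le c\,|K|\,2^{-n}h_K\,|v|_{W^{1,\infty}(K)}$; with $|q\,\partial_iv_h\,\partial_jw_h|_{W^{1,\infty}(K)}=|\partial_iv_h|\,|\partial_jw_h|\,|q|_{W^{1,\infty}(K)}$, the local bound becomes $c\,|K|\,2^{-n}h_K\,\|q\|_{W^{1,\infty}(\Omega)}|\partial_iv_h|\,|\partial_jw_h|$, and your Cauchy--Schwarz summation over elements goes through verbatim. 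Alternatively, adopting the paper's interpolation device sidesteps the problem entirely and treats both values of $p$ in one stroke.
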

\begin{proof}
Let $\Pi_{K_j}: C(K_j) \rightarrow P_1(K_j)$ be the Lagrange nodal interpolation operator on the sub-simplex $K_j$. Since the quadrature rule on $K_j$ is exact for $P_1(K_j)$, we have
$$(q\nabla v_h,\nabla w_h)-(q \nabla v_h,\nabla w_h)_h =  \sum_{K\in \mathcal{T}_h} \sum_{j=1}^{2^{dn}}
\int_{K_j} (q - \Pi_{K_j}q) \nabla v_h\cdot\nabla w_h\,\d x$$
Then the local estimate for Lagrange interpolation leads to
\begin{equation*}
\begin{aligned}
&|(q\nabla v_h,\nabla w_h)-(q \nabla v_h,\nabla w_h)_h|
\le \sum_{K\in \mathcal{T}_h} \sum_{j=1}^{2^{dn}}
\int_{K_j} \Big| (q - \Pi_{K_j}q) \nabla v_h\cdot\nabla w_h \Big| \,\d x\\
 \le& c \sum_{K\in \mathcal{T}_h} \sum_{j=1}^{2^{dn}} (2^{-n} h)^p \| q \|_{W^{p,\infty}(K_j)}
 \| \nabla v_h\|_{L^2(K_j)} \|\nabla w_h \|_{L^2(K_j)}
 \le c  (2^{-n} h)^p \| q \|_{W^{p,\infty}(\Omega)}
 \| \nabla v_h\|_{L^2\II} \|\nabla w_h \|_{L^2\II}.
\end{aligned}
\end{equation*}
This proves the desired estimate.
\end{proof}

Then the hybrid NN-FEM approximation of problem \eqref{equ:Tikhonov problem in ellip}-\eqref{equ:vari problem in ellip} (with numerical integration) reads
\begin{equation}\label{equ:dis-min-ellip-q}
	\min_{\theta\in\Theta(R)} \tilde{J}_{\gamma,h}(q_\theta)=\frac{1}{2}\|\tilde u_h( \P01(q_\theta))-z^{\delta}\|^2_{L^2(\Omega)}+\frac{\gamma}{2} Q_h(|\nabla q_\theta|^2),
\end{equation}
where $\tilde u_h\equiv \tilde u_h(\P01(q_\theta))\in X_h$ satisfies the following discrete variational problem
\begin{equation}\label{equ:dis-vari-ellip-q}	
( \P01(q_\theta)\nabla \tilde {u}_h,\nabla\varphi_h)_h=(f,\varphi_h), \quad\forall \varphi_h\in X_h.
\end{equation}
We focus on approximating the integrals involving NNs only.
The variational problem \eqref{equ:dis-vari-ellip-q} involves also the quadrature approximation, which necessitates quantifying the associated error.
The presence of $P_\mathcal{A}$ in the weak formulation ensures the $X_h$-ellipticity of the broken $L^2(\Omega)$ semi-inner product, and hence the unique existence of the discrete forward map is ensured \cite{ciarlet1991basic,abdulle2012priori}.
Then repeating the argument for problem \eqref{equ:dis-min-ellip}-\eqref{equ:dis-vari-ellip} yields the well-posedness of problem \eqref{equ:dis-min-ellip-q}-\eqref{equ:dis-vari-ellip-q}. The analysis of the quadrature error requires the following condition on the problem data.
\begin{assumption}\label{assum: ellip quad}
$q^\dag\in W^{2,\infty}(\Omega)\cap \mathcal{A}$ and $f\in L^{\infty}(\Omega)$.
\end{assumption}

Next we state an analogue of Lemma \ref{lem:uq-uh} in the presence of numerical integration.
\begin{lemma}\label{lem:uq-uh:quad}
Let Assumption \ref{assum: ellip quad} hold. Then for any $\epsilon>0$, there exists  $\theta_\epsilon\in \mathfrak{P}_{\infty,\epsilon}$ such that
 \begin{equation*}
\|u^\dag- \tilde {u}_h(  \P01({q}_{\theta_\epsilon}))\|_{L^2(\Omega)}\leq c(h^2+\epsilon).
\end{equation*}
\end{lemma}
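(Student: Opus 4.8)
The plan is to reduce to the already-understood non-quadrature situation while isolating the genuinely new quadrature contribution, the crucial point being \emph{which} intermediate quantity to compare against. First I would invoke Lemma~\ref{lem:tanh-approx} with $p=\infty$ (so $s=1$, $k=2$, using $q^\dag\in W^{2,\infty}(\Omega)$ from Assumption~\ref{assum: ellip quad}) to produce $\theta_\epsilon\in\mathfrak{P}_{\infty,\epsilon}$ whose realization obeys $\|q^\dag-q_{\theta_\epsilon}\|_{W^{1,\infty}(\Omega)}\le c\epsilon$. Writing $\tilde q:=\P01(q_{\theta_\epsilon})$, the stability bounds \eqref{eqn:P01-stab}--\eqref{eqn:P01-approx} then give $\tilde q\in\mathcal{A}$, $\|q^\dag-\tilde q\|_{L^\infty(\Omega)}\le c\epsilon$ and $\|\nabla\tilde q\|_{L^\infty(\Omega)}\le c$.

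The key is to route the error through $\tilde u_h(q^\dag)$, the quadrature solution with the \emph{exact} coefficient:
\[
u^\dag-\tilde u_h(\tilde q)=\underbrace{\big(u^\dag-u_h(q^\dag)\big)}_{\rm (I)}+\underbrace{\big(u_h(q^\dag)-\tilde u_h(q^\dag)\big)}_{\rm (II)}+\underbrace{\big(\tilde u_h(q^\dag)-\tilde u_h(\tilde q)\big)}_{\rm (III)}.
\]
Term $\rm (I)$ is the standard Galerkin $L^2(\Omega)$ error, bounded by $ch^2$ from $u^\dag\in H^2(\Omega)$. Term $\rm (II)$ is a pure quadrature error for the smooth coefficient $q^\dag\in W^{2,\infty}(\Omega)$: setting $\hat e_h=u_h(q^\dag)-\tilde u_h(q^\dag)\in X_h$, the two weak formulations give $(q^\dag\nabla\hat e_h,\nabla\varphi_h)=(q^\dag\nabla\tilde u_h(q^\dag),\nabla\varphi_h)_h-(q^\dag\nabla\tilde u_h(q^\dag),\nabla\varphi_h)$, which by Lemma~\ref{lem:quad-error} with $p=2$ is at most $c(2^{-n}h)^2\|\nabla\tilde u_h(q^\dag)\|_{L^2\II}\|\nabla\varphi_h\|_{L^2\II}\le ch^2\|\nabla\varphi_h\|_{L^2\II}$ (using $\|\nabla\tilde u_h(q^\dag)\|_{L^2\II}\le c\|f\|_{L^2\II}$ from coercivity of the broken form). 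Coercivity then yields $\|\nabla\hat e_h\|_{L^2\II}\le ch^2$, and an Aubin--Nitsche duality argument, testing the adjoint problem $-\nabla\cdot(q^\dag\nabla\psi)=\hat e_h$ which is $H^2$-regular on the convex domain $\Omega$, upgrades this to $\|\hat e_h\|_{L^2\II}\le ch^2$. Term $\rm (III)$ is a pure coefficient perturbation at fixed quadrature: with $\tilde e_h=\tilde u_h(q^\dag)-\tilde u_h(\tilde q)$, subtracting the two broken weak formulations \eqref{equ:dis-vari-ellip-q} gives $(\tilde q\nabla\tilde e_h,\nabla\varphi_h)_h=-\big((q^\dag-\tilde q)\nabla\tilde u_h(q^\dag),\nabla\varphi_h\big)_h$; testing with $\varphi_h=\tilde e_h$, invoking the coercivity of $(\cdot,\cdot)_h$ (valid since $\tilde q\ge c_0$ and $|\nabla v_h|^2$ is piecewise constant, hence integrated exactly), and using $\|q^\dag-\tilde q\|_{L^\infty\II}\le c\epsilon$ gives $\|\nabla\tilde e_h\|_{L^2\II}\le c\epsilon$, whence $\|\tilde e_h\|_{L^2\II}\le c\epsilon$ by Poincar\'e. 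Summing $\rm (I)$--$\rm (III)$ yields the assertion.

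The main obstacle, and the reason for this particular routing, is the limited regularity of $\tilde q=\P01(q_{\theta_\epsilon})$. The tempting direct split $u_h(\tilde q)-\tilde u_h(\tilde q)$ would confront a quadrature error governed by the smoothness of $\tilde q$, but the cutoff $\P01$ introduces kinks along the level sets $\{q_{\theta_\epsilon}=c_0\}$ and $\{q_{\theta_\epsilon}=c_1\}$, so $\tilde q$ is only $W^{1,\infty}(\Omega)$ and Lemma~\ref{lem:quad-error} is available only with $p=1$, producing an $\mathcal{O}(2^{-n}h)=\mathcal{O}(h)$ bound rather than the required $\mathcal{O}(h^2)$. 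Passing through $\tilde u_h(q^\dag)$ is precisely the device that restores $W^{2,\infty}$ regularity for the quadrature-consistency estimate while relegating $\tilde q$ to an $L^\infty$-only coefficient perturbation; extracting the sharp $L^2(\Omega)$ (rather than merely $H^1(\Omega)$) order in term $\rm (II)$ via duality is the most delicate piece of the bookkeeping.
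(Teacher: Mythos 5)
Your proof is correct and follows the same essential route as the paper: both arguments hinge on comparing against $\tilde u_h(q^\dag)$, the quadrature solution with the \emph{exact} coefficient, precisely so that the quadrature-consistency error only ever sees the $W^{2,\infty}(\Omega)$-smooth $q^\dag$ while the cutoff function $\P01(q_{\theta_\epsilon})$ enters only as an $L^\infty(\Omega)$ coefficient perturbation; your term $\rm(III)$ is verbatim the paper's estimate of $\tilde w_h$. The one difference is how the piece $\|u^\dag-\tilde u_h(q^\dag)\|_{L^2(\Omega)}\le ch^2$ is obtained: the paper simply cites it from Abdulle and Vilmart \cite[Theorem 5]{abdulle2012priori}, whereas you prove it self-containedly by splitting off the Galerkin error $\rm(I)$ and estimating the quadrature perturbation $\rm(II)$ via Lemma~\ref{lem:quad-error} with $p=2$ --- which is exactly the argument the paper itself deploys later for the parabolic analogue (the bound \eqref{eqn:est-para-h} inside Lemma~\ref{lemma:err uq-uh in para, quad}), so your version is arguably more transparent and consistent with the paper's toolbox. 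One small correction: the Aubin--Nitsche duality step in your term $\rm(II)$ is superfluous, not delicate. Since Lemma~\ref{lem:quad-error} with $p=2$ already yields the \emph{energy} bound $\|\nabla\hat e_h\|_{L^2(\Omega)}\le c(2^{-n}h)^2\le ch^2$, the Poincar\'e inequality immediately gives $\|\hat e_h\|_{L^2(\Omega)}\le ch^2$ at the same order; duality would buy an extra power of $h$ only if the energy estimate were of lower order (e.g., had you been forced to use $p=1$), and carrying it out honestly would in fact require handling an additional quadrature perturbation of the Galerkin orthogonality, which you do not need. Your closing observation --- that the direct split $u_h(\tilde q)-\tilde u_h(\tilde q)$ fails because $\P01(q_{\theta_\epsilon})$ is only $W^{1,\infty}(\Omega)$, restricting Lemma~\ref{lem:quad-error} to $p=1$ and an $\mathcal{O}(2^{-n}h)$ rate --- correctly identifies the design constraint behind the paper's (and your) routing.
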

\begin{proof}
The proof is similar to Lemma \ref{lem:uq-uh}. First, under Assumption \ref{assum: ellip quad}, there holds $\|u^\dag-\tilde{u}_h(  q^\dag)\|_{L^2(\Omega)}\leq ch^2$ \cite[Theorem 5]{abdulle2012priori}.
Next by Lemma  \ref{lem:tanh-approx}, there exists $\theta_{\epsilon} \in \mathfrak{P}_{\infty,\epsilon}$ such that its NN realization $q_{\theta_\epsilon}$ satisfies
\begin{equation}\label{eqn:qeps-W1p-2}
\|q^\dag- {q}_{\theta_\epsilon}\|_{W^{1,\infty}(\Omega)}\leq \epsilon.
\end{equation}
Then by the stability estimate  \eqref{eqn:P01-approx} of the operator $\P01$, we deduce
\begin{equation}\label{eqn:qeps-W1inf}
 \|q^\dag-\P01(q_{\theta_\epsilon})\|_{L^{\infty}(\Omega)} \leq \epsilon.
 \end{equation}
Let $\tilde{w}_h:= \tilde{u}_h( \P01( {q}_{\theta_\epsilon}))-\tilde{u}_h(q^\dag)\in X_h$. Repeating the argument of Lemma \ref{lem:uq-uh} yields
\begin{align*}
c_0\| \nabla \tilde{w}_h\|^2_{L^2\II}&\leq ( \P01( {q}_{\theta_\epsilon})\nabla \tilde{w}_h,\nabla \tilde{w}_h)_h
= \big( (q^\dag-\P01( {q}_{\theta_\epsilon}))\nabla \tilde{u}_h(q^\dag),\nabla \tilde{w}_h\big)_h\\
 &\leq\| q^\dag-\P01( {q}_{\theta_\epsilon})\|_{L^\infty(\Omega)}\|\nabla \tilde{u}_h(q^\dag)\|_{L^2\II} \|\nabla \tilde{w}_h\|_{L^2\II},
\end{align*}
since
$|(q\nabla u_h,\nabla v_h)_h|\leq c\|q\|_{L^\infty(\Omega)}\|\nabla u_h\|_{L^2(\Omega)}\|\nabla v_h\|_{L^2(\Omega)}$.
Using the estimate \eqref{eqn:qeps-W1inf}, the stability of Lagrange interpolation and the \textit{a priori} estimate $\|\nabla \tilde{u}_h( q^\dagger)\|_{L^2\II}\le c$ and Poincar\'{e} inequality, we  deduce
\begin{align*}
\|\tilde{w}_h\|_{L^2(\Omega)}&\leq c\|\nabla \tilde{w}_h\|_{L^2(\Omega)}\leq c\| q^\dag-\P01(q_{\theta_\epsilon})\|_{L^\infty(\Omega)}\|\nabla \tilde{u}_h( q^\dag)\|_{L^2(\Omega)}
\leq c\| q^\dag-\P01(q_{\theta_\epsilon})\|_{L^\infty(\Omega)} \le c\epsilon.
\end{align*}
This completes the proof of the lemma.
\end{proof}

The next lemma provides an \textsl{a priori} bound on $\|u^\dag-\tilde{u}_h(\P01(q^*_\theta))\|_{L^2(\Omega)}$ and $\nabla q^*_\theta$.

\begin{lemma}\label{lem:pri grad qN* quad}
Let Assumption \ref{assum: ellip quad} hold. Fix $\epsilon>0$, and let $\theta^*\in\mathfrak{P}_{\infty,\epsilon} $ be a minimizer to problem \eqref{equ:dis-min-ellip-q}-\eqref{equ:dis-vari-ellip-q} and $q^*_\theta$ its NN realization. Then the following estimate holds
\begin{equation*}
\|u^\dag -\tilde{u}_h( \P01 (q^*_\theta))\|^2_{L^2(\Omega)}+\gamma Q_h(|\nabla  q^*_\theta|^2)\leq c(h^4+\epsilon^2+\delta^2+\gamma).
\end{equation*}
\end{lemma}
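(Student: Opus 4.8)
The plan is to follow the structure of the proof of Lemma~\ref{lem:pri grad qN*}, replacing the continuous state and $L^2\II$ penalty by their quadrature counterparts $\tilde u_h$ and $Q_h(|\nabla\cdot|^2)$. First I would invoke Lemma~\ref{lem:uq-uh:quad} to obtain an approximant $\theta_\epsilon\in\mathfrak{P}_{\infty,\epsilon}$ whose NN realization $q_{\theta_\epsilon}$ satisfies $\|u^\dag-\tilde u_h(\P01(q_{\theta_\epsilon}))\|_{L^2\II}\le c(h^2+\epsilon)$, together with the $W^{1,\infty}\II$ approximation bound \eqref{eqn:qeps-W1p-2}. The core of the argument is then the minimizing property $\tilde J_{\gamma,h}(q_\theta^*)\le \tilde J_{\gamma,h}(q_{\theta_\epsilon})$, which after expanding the objective and clearing the factor $\tfrac12$ gives
\begin{equation*}
\|\tilde u_h(\P01(q_\theta^*))-z^\delta\|_{L^2\II}^2+\gamma\, Q_h(|\nabla q_\theta^*|^2)\le \|\tilde u_h(\P01(q_{\theta_\epsilon}))-z^\delta\|_{L^2\II}^2+\gamma\, Q_h(|\nabla q_{\theta_\epsilon}|^2).
\end{equation*}

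The step that genuinely differs from Lemma~\ref{lem:pri grad qN*}, and which I expect to be the main obstacle, is controlling the discrete penalty $Q_h(|\nabla q_{\theta_\epsilon}|^2)$ of the approximant. Since $q_{\theta_\epsilon}$ is a globally smooth NN function and \emph{not} an element of $X_h$, the quadrature estimate of Lemma~\ref{lem:quad-error} does not apply, so the continuous bound $\|\nabla q_{\theta_\epsilon}\|_{L^2\II}^2\le c$ used previously is not directly available at the discrete level. The resolution is precisely why the $p=\infty$ class $\mathfrak{P}_{\infty,\epsilon}$ is used: the bound \eqref{eqn:qeps-W1p-2} together with $q^\dag\in W^{2,\infty}\II\subset W^{1,\infty}\II$ yields $\|\nabla q_{\theta_\epsilon}\|_{L^\infty\II}\le \|q^\dag\|_{W^{1,\infty}\II}+\epsilon\le c$. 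Because the quadrature weights $|K_i|/(d+1)$ are nonnegative and sum to $|\Omega|$, evaluating $|\nabla q_{\theta_\epsilon}|^2$ at the nodes gives
\begin{equation*}
Q_h(|\nabla q_{\theta_\epsilon}|^2)\le |\Omega|\,\|\nabla q_{\theta_\epsilon}\|_{L^\infty\II}^2\le c,
\end{equation*}
so that $\gamma\, Q_h(|\nabla q_{\theta_\epsilon}|^2)\le c\gamma$.

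It then remains to assemble the bound. Using Lemma~\ref{lem:uq-uh:quad} and $\delta=\|u^\dag-z^\delta\|_{L^2\II}$, I would estimate the data-fitting term on the right-hand side by $\|\tilde u_h(\P01(q_{\theta_\epsilon}))-z^\delta\|_{L^2\II}^2\le c(\|\tilde u_h(\P01(q_{\theta_\epsilon}))-u^\dag\|_{L^2\II}^2+\delta^2)\le c(h^4+\epsilon^2+\delta^2)$, whence the displayed minimizing inequality yields $\|\tilde u_h(\P01(q_\theta^*))-z^\delta\|_{L^2\II}^2+\gamma\,Q_h(|\nabla q_\theta^*|^2)\le c(h^4+\epsilon^2+\delta^2+\gamma)$. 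Finally, a triangle inequality converts the data-fitting term into the desired state error, $\|u^\dag-\tilde u_h(\P01(q_\theta^*))\|_{L^2\II}^2\le c(\|u^\dag-z^\delta\|_{L^2\II}^2+\|z^\delta-\tilde u_h(\P01(q_\theta^*))\|_{L^2\II}^2)\le c(h^4+\epsilon^2+\delta^2+\gamma)$, and combining this with the retained penalty term completes the proof.
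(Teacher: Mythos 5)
Your proposal is correct and follows essentially the same route as the paper: the minimizing property of $\theta^*$, Lemma~\ref{lem:uq-uh:quad} for the data-fitting term of the approximant, and the $W^{1,\infty}(\Omega)$ bound from \eqref{eqn:qeps-W1p-2} to control the discrete penalty, followed by the same triangle-inequality conclusion as in Lemma~\ref{lem:pri grad qN*}. Your explicit observation that the quadrature weights are nonnegative and sum to $|\Omega|$, so that $Q_h(|\nabla q_{\theta_\epsilon}|^2)\le |\Omega|\,\|\nabla q_{\theta_\epsilon}\|_{L^\infty(\Omega)}^2\le c$, is precisely what the paper means by the stability of $Q_h$ on $C(\overline{\Omega})$, just spelled out in more detail.
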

 \begin{proof}
The proof relies on the minimizing property of $\theta^*$, Lemma \ref{lem:uq-uh:quad} and the existence of an element $ q_{\theta_\epsilon}\in W^{1,\infty}(\Omega)$ satisfying \eqref{eqn:qeps-W1p-2}. The estimate \eqref{eqn:qeps-W1p-2} and the regularity $q^\dag\in W^{2,\infty}(\Omega)$ implies $\|q_\epsilon\|_{W^{1,\infty}(\Omega)}\leq c$. This  yields $Q_h(|\nabla q_\epsilon|^2)\leq c$, since the quadrature operator $Q_h$ is stable on $C(\overline{\Omega})$. The rest of the proof is identical with that for Lemma \ref{lem:pri grad qN*}, and hence, we omit the details.
\end{proof}

Next we show an \textit{a priori} bound on quadrature error of the penalty term.
\begin{lemma}\label{lem:quad-err-H1}
Let $\theta \in \mathfrak{P}_{\infty,\epsilon}$, of depth $L$, width $W$ and bound $R$, and $v_\theta$ be its NN realization, with $RW>2$. Then the following quadrature error estimate holds
\begin{align*}
\|\nabla v_\theta\|^2_{L^2(\Omega)}-Q_h(\|\nabla v_\theta\|_{\ell^2}^2)\leq c2^{-2n} h^2 |\sum_{i=1}^d(\partial_{x_i} v_\theta)^2|_{W^{2,\infty}(\Omega)}\leq c2^{-2n}h^2R^{4L}{W}^{4L-4}.
\end{align*}
\end{lemma}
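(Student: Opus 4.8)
The plan is to split the assertion into its two inequalities and handle them in turn. For the first inequality, note that $\|\nabla v_\theta\|_{\ell^2}^2 = g$ pointwise, where $g := \sum_{i=1}^d(\partial_{x_i}v_\theta)^2$, so the left-hand side equals $\int_\Omega g\,\d x - Q_h(g)$ and is bounded by its absolute value. Since $v_\theta$ is a composition of affine maps with the smooth activation $\rho=\tanh$, the function $g$ is smooth and in particular $g\in H^2(\Omega)$, so the global quadrature estimate \eqref{ineq: quad approx} gives $|\int_\Omega g\,\d x - Q_h(g)|\le c2^{-2n}h^2|g|_{H^2(\Omega)}$. On the bounded domain $\Omega$ one has $|g|_{H^2(\Omega)}\le |\Omega|^{\frac12}|g|_{W^{2,\infty}(\Omega)}$, which yields the first inequality.

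For the second inequality I would first reduce $|g|_{W^{2,\infty}(\Omega)}$ to pointwise bounds on the derivatives of $v_\theta$ up to order three. Indeed, differentiating $g$ twice gives, for any $j,k$, $\partial_{x_k}\partial_{x_j}g = 2\sum_{i=1}^d\big[(\partial_{x_k}\partial_{x_i}v_\theta)(\partial_{x_j}\partial_{x_i}v_\theta)+(\partial_{x_i}v_\theta)(\partial_{x_k}\partial_{x_j}\partial_{x_i}v_\theta)\big]$, so that $|g|_{W^{2,\infty}(\Omega)}\le 2d\big(M_2^2+M_1M_3\big)$, where $M_k:=\max_{|\alpha|=k}\|\partial^\alpha v_\theta\|_{L^\infty(\Omega)}$ for $k=1,2,3$. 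It then suffices to show $M_1\le R^LW^{L-1}$, $M_2\le cR^{2L}W^{2L-2}$ and $M_3\le cR^{3L}W^{3L-3}$; substituting these into the displayed bound reproduces exactly $cR^{4L}W^{4L-4}$ (using $RW\ge1$ to absorb the small shifts in the exponents).

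The core of the argument, and the main obstacle, is establishing these derivative-growth bounds by tracking how the chain rule propagates through the $L$ layers of \eqref{equ: network realization}. Writing $z^{(\ell)}=A^{(\ell)}v^{(\ell-1)}+b^{(\ell)}$ and $v^{(\ell)}=\rho(z^{(\ell)})$, and letting $M_k^{(\ell)}$ denote the maximal $L^\infty(\Omega)$-norm of the order-$k$ partial derivatives of the components of $v^{(\ell)}$, I would prove simultaneously by induction on $\ell$ that $M_k^{(\ell)}\le C_k(R^\ell W^{\ell-1})^k$ for $k=1,2,3$. Each layer contributes a factor $WR$ through $\|\partial^\alpha z^{(\ell)}\|_{L^\infty(\Omega)}\le WR\,M_{|\alpha|}^{(\ell-1)}$ (each row of $A^{(\ell)}$ has at most $W$ entries bounded by $R$), while the higher-order chain rule applied to $\partial^\alpha\rho(z^{(\ell)})$ together with the bounds $\|\rho^{(k)}\|_{L^\infty(\mathbb{R})}\le2$ from Lemma \ref{lem:rho} yields the recursions
\begin{align*}
M_1^{(\ell)}&\le WR\,M_1^{(\ell-1)},\qquad
M_2^{(\ell)}\le (WR\,M_1^{(\ell-1)})^2+WR\,M_2^{(\ell-1)},\\
M_3^{(\ell)}&\le 2(WR\,M_1^{(\ell-1)})^3+3(WR\,M_2^{(\ell-1)})(WR\,M_1^{(\ell-1)})+WR\,M_3^{(\ell-1)}.
\end{align*}

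The delicate point is that $M_1,M_2,M_3$ must be kept separate rather than bounded by a common quantity: the leading (inhomogeneous) term of each recursion is driven by the slowly growing $M_1^{(\ell-1)}\sim(RW)^{\ell-1}$, which dominates the homogeneous carry-over, so the claimed profile $(R^\ell W^{\ell-1})^k$ is exactly reproduced at each step; a naive lumped recursion would instead cube the bound at every layer and give a hopeless triply-exponential estimate. The hypothesis $RW>2$ is precisely what is needed to absorb the lower-order terms (the factors $C_2(RW)^{-1}$, $C_2(RW)^{-1}$ and $C_3(RW)^{-2}$ appearing after substitution) into the constants $C_k$, thereby closing the induction; the first and last layers require only minor modifications since $d_0=d$ gives the base cases $M_k^{(1)}\le 2R^k$ and $d_L=1$ makes the output layer affine. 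Finally, inserting $M_1^{(L)}, M_2^{(L)}, M_3^{(L)}$ into $|g|_{W^{2,\infty}(\Omega)}\le 2d(M_2^2+M_1M_3)$ completes the proof.
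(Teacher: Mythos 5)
Your proposal is correct and follows essentially the same route as the paper: both reduce the quadrature error to the global estimate \eqref{ineq: quad approx} applied to $g=\sum_i(\partial_{x_i}v_\theta)^2$ and then propagate $L^\infty$ bounds on derivatives of $v_\theta$ up to order three layer by layer, with the same recursions and the same use of $RW\geq 2$ to absorb the lower-order carry-over terms. The only cosmetic differences are that the paper imports the first-order bound \eqref{eqn:dvthe} from \cite{jin2022imaging} and solves the recursions as explicit geometric sums (yielding the constants $2$ and $10$) rather than closing an induction hypothesis with constants $C_k$, and your write-up makes explicit the final assembly $|g|_{W^{2,\infty}(\Omega)}\le 2d(M_2^2+M_1M_3)$, which the paper leaves implicit.
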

\begin{proof}
%Let $\theta=\{(W^{(\ell)},b^{(\ell)})_{\ell=1}^{\mathcal{D}}\}$ be a neural network parameter.
By the NN realization \eqref{equ: network realization}, we have for every layer $\ell\in [L-1]$ and each $i\in [d_\ell]$,
$v^{(\ell)}_i=\rho\Big(\sum_{j=1}^{d_{\ell-1}}A^{(\ell)}_{ij}v^{(\ell-1)}_j+b^{(\ell)}_i\Big)$.
Then for all $1\leq k,m\leq d$, direct computation with the chain rule gives
\begin{align*}
	\partial^2_{ x_k,x_m}v^{(\ell)}_i=&\rho''\Big(\sum_{j=1}^{d_{\ell-1}}A^{(\ell)}_{ij}v^{(\ell-1)}_j+b^{(\ell)}_i\Big)\Big(\sum_{j=1}^{d_{\ell-1}}A^{(\ell)}_{ij}\partial_{ x_k}v^{(\ell-1)}_j\Big)\Big(\sum_{j=1}^{d_{\ell-1}}A^{(\ell)}_{ij}\partial_{ x_m}v^{(\ell-1)}_j\Big)\\&+ \rho'\Big(\sum_{j=1}^{d_{\ell-1}}A^{(\ell)}_{ij}v^{(\ell-1)}_j+b^{(\ell)}_i\Big)\Big(\sum_{j=1}^{d_{\ell-1}}A^{(\ell)}_{ij}\partial^2_{x_k,x_m}v^{(\ell-1)}_j\Big).
\end{align*}
Note that for the tanh activation function $\rho$, $\|\rho'\|_{L^\infty(\mathbb{R})}\leq 1,\|\rho''\|_{L^\infty(\mathbb{R})}\leq 1$, cf. Lemma \ref{lem:rho}, and further \cite[Lemma 3.4, eq. (3.6)]{jin2022imaging}
\begin{equation}\label{eqn:dvthe}
\|\partial_{x_k}v_j^{(\ell)}\|_{L^\infty(\Omega)}\leq R^{\ell}{W}^{\ell-1},\quad
\forall \ell\in [L-1], j\in [d_\ell].
\end{equation}
Then we arrive at
\begin{align*}
	\|\partial^2_{ x_k,x_m}v^{(\ell)}_i\|_{L^\infty(\Omega)}&\leq R^2{W}^2\max_{j\in[d_{d_{\ell-1}}]}\|\partial_{x_k} v_j^{(\ell-1)}\|_{L^\infty(\Omega)}\max_{j\in[d_{d_{\ell-1}}]}\|\partial_{x_m} v_j^{(\ell-1)}\|_{L^\infty(\Omega)}\\
   &\quad +R{W}\max_{j\in[d_{d_{\ell-1}}]}\|\partial^2_{ x_k,x_m}v_j^{(\ell-1)}\|_{L^\infty(\Omega)}\leq R^{2\ell}{W}^{2\ell-2}+R{W}\max_{j\in[d_{\ell-1}]}\|\partial^2_{ x_k,x_m}v_j^{(\ell-1)}\|_{L^\infty(\Omega)}.
\end{align*}
Note also the trivial estimate \begin{align*}
\|\partial^2_{ x_k,x_m}v^{(1)}_i\|_{L^\infty(\Omega)}\leq \bigg\|\rho''\Big(\sum_{j=1}^{d}A^{(1)}_{ij}x_j+b^{(1)}_i\Big)A^{(1)}_{ik}A^{(1)}_{im}\bigg\|_{L^\infty(\Omega)}\leq R^2.
\end{align*}
Taking maximum in $i\in[d_\ell]$ and then applying the inequality recursively lead to
\begin{align*}
\max_{i\in[d_\ell]}\|\partial_{x_k,x_m}v_i^{(\ell)}\|_{L^\infty(\Omega)} & \leq R^2\sum_{j=1}^{\ell-1} (RW)^{2j} (RW)^{\ell-1-j} + (RW)^{\ell-1}\max_{i\in [d_1]}\|\partial_{x_k,x_m}v_i^{(1)}\|_{L^\infty(\Omega)}\\
&= R^{2\ell}W^{2\ell-2} \sum_{j=0}^{\ell-1}(RW)^{-j} \leq \frac{R^{2\ell}W^{2\ell-2}}{1-(RW)^{-1}}.
\end{align*}
Hence, under the condition $RW\geq2$, we may bound
\begin{equation}\label{ineq: 2nd diff bound}
\|\partial^2_{ x_k,x_s}v^{(\ell)}_i\|_{L^\infty(\Omega)}\leq 2R^{2\ell}W^{2\ell-2},\quad \forall \ell\in\in [L], i\in [d_\ell].
\end{equation}
By direct computation, we obtain for $1\leq k,m,n\leq d$
\begin{align*}
	\partial^3_{ x_k,x_m,x_n}v^{(\ell)}_i=&\rho'''\Big(\sum_{j=1}^{d_{\ell-1}}A^{(\ell)}_{ij}v^{(\ell-1)}_j+b^{(\ell)}_i\Big)\Big(\sum_{j=1}^{d_{\ell-1}}A^{(\ell)}_{ij}\partial_{ x_k}v^{(\ell-1)}_j\Big)\Big(\sum_{j=1}^{d_{\ell-1}}A^{(\ell)}_{ij}\partial_{ x_m}v^{(\ell-1)}_j\Big)\Big(\sum_{j=1}^{d_{\ell-1}}A^{(\ell)}_{ij}\partial_{ x_n}v^{(\ell-1)}_j\Big)\\&+\rho''\Big(\sum_{j=1}^{d_{\ell-1}}A^{(\ell)}_{ij}v^{(\ell-1)}_j+b^{(\ell)}_i\Big)\Big(\sum_{j=1}^{d_{\ell-1}}A^{(\ell)}_{ij}\partial^2_{x_k,x_n}v^{(\ell-1)}_j\Big)\Big(\sum_{j=1}^{d_{\ell-1}}A^{(\ell)}_{ij}\partial_{ x_m}v^{(\ell-1)}_j\Big)\\&+\rho''\Big(\sum_{j=1}^{d_{\ell-1}}A^{(\ell)}_{ij}v^{(\ell-1)}_j+b^{(\ell)}_i\Big)\Big(\sum_{j=1}^{d_{\ell-1}}A^{(\ell)}_{ij}\partial^2_{x_m,x_n}v^{(\ell-1)}_j\Big)\Big(\sum_{j=1}^{d_{\ell-1}}A^{(\ell)}_{ij}\partial_{ x_k}v^{(\ell-1)}_j\Big)\\&+ \rho''\Big(\sum_{j=1}^{d_{\ell-1}}A^{(\ell)}_{ij}v^{(\ell-1)}_j+b^{(\ell)}_i\Big)\Big(\sum_{j=1}^{d_{\ell-1}}A^{(\ell)}_{ij}\partial^2_{x_k,x_m}v^{(\ell-1)}_j\Big)\Big(\sum_{j=1}^{d_{\ell-1}}A^{(\ell)}_{ij}\partial_{ x_n}v^{(\ell-1)}_j\Big)\\&+\rho'\Big(\sum_{j=1}^{d_{\ell-1}}A^{(\ell)}_{ij}v^{(\ell-1)}_j+b^{(\ell)}_i\Big)\Big(\sum_{j=1}^{d_{\ell-1}}A^{(\ell)}_{ij}\partial^3_{x_k,x_m,x_n}v^{(\ell-1)}_j\Big).
\end{align*}
This, along with the bound $\|\rho'''\|_{L^\infty(\mathbb{R})}\leq 2$ from Lemma \ref{lem:rho}, implies
\begin{align*}
 \|\partial^3_{ x_k,x_m,x_n}v^{(\ell)}_i&\|_{L^\infty(\Omega)}\leq RW\max_{j\in[d_{\ell-1}]}\|\partial^3_{ x_k,x_m,x_n}v_j^{(\ell-1)}\|_{L^\infty(\Omega)}\\
 &+ 2 R^3W^3\max_{j\in[d_{\ell-1}]}\|\partial_{x_k} v_j^{(\ell-1)}\|_{L^\infty(\Omega)}\max_{j\in[d_{\ell-1}]}\|\partial_{x_m} v_j^{(\ell-1)}\|_{L^\infty(\Omega)}\max_{j\in[d_{\ell-1}]}\|\partial_{x_n} v_j^{(\ell-1)}\|_{L^\infty(\Omega)} \\
 &+ R^2W^2\Big(\max_{j\in[d_{\ell-1}]}\|\partial^2_{ x_k,x_n}v_j^{(\ell-1)}\|_{L^\infty(\Omega)}\max_{j\in[d_{\ell-1}]}\|\partial_{ x_m}v_j^{(\ell-1)}\|_{L^\infty(\Omega)}\\
  &+\max_{j\in[d_{\ell-1}]}\|\partial^2_{ x_m,x_n}v_j^{(\ell-1)}\|_{L^\infty(\Omega)}\max_{j\in[d_{\ell-1}]}\|\partial_{ x_k}v_j^{(\ell-1)}\|_{L^\infty(\Omega)}\\
  &+\max_{j\in[d_{\ell-1}]}\|\partial^2_{ x_k,x_m}v_j^{(\ell-1)}\|_{L^\infty(\Omega)}\max_{j\in[d_{\ell-1}]}\|\partial_{ x_n}v_j^{(\ell-1)}\|_{L^\infty(\Omega)}\Big).
\end{align*}
Then it follows from the estimates \eqref{eqn:dvthe} and \eqref{ineq: 2nd diff bound} and
the condition $RW\geq 2$ that
\begin{align*}
	\|\partial^3_{ x_k,x_m,x_n}v^{(\ell)}_i\|_{L^\infty(\Omega)}&\leq 2R^{3\ell}W^{3\ell-3}+6R^{3\ell-1}W^{3\ell-4}+RW\max_{j\in[d_{\ell-1}]}\|\partial^3_{ x_k,x_m,x_n}v_j^{(\ell-1)}\|_{L^\infty(\Omega)}\\
 & \leq 5R^{3\ell}W^{3\ell-3}+RW\max_{j\in[d_{\ell-1}]}\|\partial^3_{ x_k,x_m,x_n}v_j^{(\ell-1)}\|_{L^\infty(\Omega)}.
\end{align*}
Meanwhile, direct computation gives the estimate
\begin{equation*}
    \|\partial^3_{ x_k,x_m,x_n}v^{(1)}_i\|_{L^\infty(\Omega)}\leq \bigg\|\rho'''\Big(\sum_{j=1}^{d}A^{(1)}_{ij}x_j+b^{(1)}_i\Big)A^{(1)}_{ik}A^{(1)}_{im}A^{(1)}_{in}\bigg\|_{L^\infty(\Omega)}\leq 2R^3.
\end{equation*}
The last two estimates together and the condition $RW\geq2$ yield
\begin{equation}\label{ineq: 3rd diff bound}
	\|\partial^3_{ x_k,x_s,x_t}v^{(\ell)}_i\|_{L^\infty(\Omega)}\leq 10R^{3\ell}{W}^{3\ell-3},\quad \forall \ell \in [L].
\end{equation}
Substituting this estimate into \eqref{eqn:quad-global}  completes the proof of the lemma.
\end{proof}

Now we can state the error estimate on the approximation $\tilde q_\theta^*$.
\begin{theorem}\label{thm:error-ellip-q}
Let Assumption \ref{assum: ellip quad} hold. Fix $\epsilon>0$, and let $\tilde{\theta}^*\in \mathfrak{P}_{\infty,\epsilon}$ be a minimizer to problem \eqref{equ:dis-min-ellip-q}-\eqref{equ:dis-vari-ellip-q} and $\tilde q_\theta^*$ be its NN realization. Then with $\eta^2:=h^4+\epsilon^2+\delta^2+\gamma$ and $\zeta:=1+\gamma^{-1}\eta^2+2^{-2n} h^2 R^{4L}{W}^{4L-4}$, we have
\begin{align*}		
\int_{\Omega}\Big(\frac{q^\dag- \P01(\tilde q_\theta^*)}{q^\dag}\Big)^2\big(q^\dag |\nabla u^\dag|^2+fu^\dag\big)\ \mathrm{d}x
\leq c\big(h\zeta^\frac12+ (\min(h^{-1}\eta+h,1) +  2^{-n}h R^{L}{W}^{L} \big)\zeta^\frac12.
\end{align*}
Moreover, if condition \eqref{Cond: P} holds, then
\begin{equation*}
\|q^\dagger-\P01(\tilde q_\theta^*)\|_{L^2(\Omega)}\leq c\big(h\zeta^\frac12+ (\min(h^{-1}\eta+h,1) +  2^{-n}h R^{L}{W}^{L} )\zeta^\frac12\big)^{\frac{1}{2(\beta+1)}}.
\end{equation*}
\end{theorem}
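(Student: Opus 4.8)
The plan is to adapt the proof of Theorem~\ref{thm:error-ellip} to the quadrature setting, carefully tracking the extra integration error that the broken form $(\cdot,\cdot)_h$ introduces. First I would start from the weak formulation \eqref{equ:vari problem in ellip} of $u^\dag$ and the perturbed discrete formulation \eqref{equ:dis-vari-ellip-q} of $\tilde{u}_h(\P01(\tilde q_\theta^*))$. Testing $\big((q^\dag-\P01(\tilde q_\theta^*))\nabla u^\dag,\nabla\varphi\big)$ with the splitting $\varphi=(\varphi-P_h\varphi)+P_h\varphi$, integrating by parts in the first piece, and using the discrete identity $(f,P_h\varphi)=(\P01(\tilde q_\theta^*)\nabla\tilde u_h,\nabla P_h\varphi)_h$ in the second, I obtain a three-term decomposition
\begin{align*}
\big((q^\dag-\P01(\tilde q_\theta^*))\nabla u^\dag,\nabla\varphi\big)={\rm I}+{\rm II}+{\rm III},
\end{align*}
where ${\rm I}=-\big(\nabla\cdot((q^\dag-\P01(\tilde q_\theta^*))\nabla u^\dag),\varphi-P_h\varphi\big)$ and ${\rm II}=\big(\P01(\tilde q_\theta^*)\nabla(\tilde u_h-u^\dag),\nabla P_h\varphi\big)$ are exactly as in Theorem~\ref{thm:error-ellip}, while ${\rm III}=(\P01(\tilde q_\theta^*)\nabla\tilde u_h,\nabla P_h\varphi)_h-(\P01(\tilde q_\theta^*)\nabla\tilde u_h,\nabla P_h\varphi)$ is the genuinely new quadrature-error term. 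Choosing $\varphi=\frac{q^\dag-\P01(\tilde q_\theta^*)}{q^\dag}u^\dag\in H_0^1(\Omega)$ and invoking \cite[Theorem 2.2]{bonito2017diffusion} converts the left-hand side into the weighted quantity in the statement.

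The crucial new ingredient is the a priori gradient bound. Lemma~\ref{lem:pri grad qN* quad} controls only the \emph{discrete} seminorm, $\gamma Q_h(|\nabla\tilde q_\theta^*|^2)\le c\eta^2$, whereas the test function involves the genuine $L^2(\Omega)$ gradient. Converting via Lemma~\ref{lem:quad-err-H1} gives $\|\nabla\tilde q_\theta^*\|_{L^2(\Omega)}^2\le Q_h(|\nabla\tilde q_\theta^*|^2)+c2^{-2n}h^2R^{4L}W^{4L-4}$, so together with the stability \eqref{eqn:P01-stab} of $\P01$ one arrives at
\begin{align*}
1+\|\nabla\P01(\tilde q_\theta^*)\|_{L^2(\Omega)}\le c\,\zeta^{\frac12}.
\end{align*}
This is exactly why $\zeta$ enters the bound, and, as in \eqref{eqn:bound-dphi}, it yields $\|\nabla\varphi\|_{L^2(\Omega)}\le c\zeta^{\frac12}$.

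With these in hand I would bound the three terms. Term ${\rm I}$ is treated as before: using $u^\dag\in W^{2,\infty}(\Omega)$, \eqref{inequ: L2 proj approx} with $s=1$, and the gradient bound, $|{\rm I}|\le ch(1+\|\nabla\P01(\tilde q_\theta^*)\|_{L^2(\Omega)})\|\nabla\varphi\|_{L^2(\Omega)}\le ch\zeta$. Term ${\rm II}$ is handled by the inverse inequality in $X_h$ together with Lemma~\ref{lem:pri grad qN* quad} (which supplies $\|\tilde u_h-u^\dag\|_{L^2(\Omega)}\le c\eta$) and the trivial bound $\|\nabla(\tilde u_h-u^\dag)\|_{L^2(\Omega)}\le c$, giving $|{\rm II}|\le c\zeta^{\frac12}\min(h^{-1}\eta+h,1)$. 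The new estimate is for ${\rm III}$: since $\tilde u_h,P_h\varphi\in X_h$ I apply Lemma~\ref{lem:quad-error} with $p=1$ (the choice $p=2$ is unavailable because $\P01(\tilde q_\theta^*)$ is only Lipschitz, with a second-derivative jump across the level sets $\{\tilde q_\theta^*=c_0\}\cup\{\tilde q_\theta^*=c_1\}$), using $\|\P01(\tilde q_\theta^*)\|_{W^{1,\infty}(\Omega)}\le c(1+\|\nabla\tilde q_\theta^*\|_{L^\infty(\Omega)})\le cR^LW^L$ from \eqref{eqn:dvthe} and the $L^\infty$ stability of $\P01$; combined with $\|\nabla\tilde u_h\|_{L^2(\Omega)}\le c$ and $\|\nabla P_h\varphi\|_{L^2(\Omega)}\le c\zeta^{\frac12}$ this gives $|{\rm III}|\le c2^{-n}hR^LW^L\zeta^{\frac12}$. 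Summing the three bounds and applying the Bonito identity yields the first assertion. The main obstacle is the bookkeeping of the NN-architecture-dependent constants: both the conversion of the discrete penalty (injecting $R^{4L}W^{4L-4}$ into $\zeta$) and the quadrature term ${\rm III}$ (injecting $R^LW^L$) introduce such factors, and one must verify that they assemble consistently into the stated combination of $\zeta^{1/2}$, and in particular that only the weaker $2^{-n}$ (rather than $2^{-2n}$) rate survives from ${\rm III}$.

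Finally, the weighted estimate is upgraded to the full $L^2(\Omega)$ bound exactly as in Theorem~\ref{thm:error-ellip}: under condition \eqref{Cond: P} I split $\Omega=\Omega_\rho\cup\Omega_\rho^c$, control $\|q^\dag-\P01(\tilde q_\theta^*)\|_{L^2(\Omega_\rho)}^2$ by $\rho^{-\beta}$ times the weighted quantity and $\|q^\dag-\P01(\tilde q_\theta^*)\|_{L^2(\Omega_\rho^c)}^2\le c\rho$ via the box constraint, then optimize over $\rho>0$ to produce the exponent $\frac{1}{2(\beta+1)}$.
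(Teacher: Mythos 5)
Your proposal reproduces the paper's proof essentially step for step: the same three-term decomposition ${\rm I}+{\rm II}+{\rm III}$ with the test function $\varphi=\frac{q^\dag-P_{\mathcal{A}}(\tilde q_\theta^*)}{q^\dag}u^\dag$, the same conversion of the discrete penalty via Lemmas \ref{lem:pri grad qN* quad} and \ref{lem:quad-err-H1} to obtain $\|\nabla\varphi\|_{L^2(\Omega)}\le c\zeta^{1/2}$, the same bounds on ${\rm I}$ and ${\rm II}$, the same treatment of the quadrature term ${\rm III}$ via Lemma \ref{lem:quad-error} with $p=1$ together with \eqref{eqn:dvthe} and the stability of $P_{\mathcal{A}}$, and the identical $\Omega_\rho$-splitting and optimization over $\rho$ for the $L^2(\Omega)$ bound. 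Your side remark that $p=2$ is unavailable because the projection leaves $P_{\mathcal{A}}(\tilde q_\theta^*)$ only Lipschitz is a correct clarification of a choice the paper makes silently.
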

\begin{proof}
By the weak formulations of $u^\dag$ and $\tilde u_h(P_\mathcal{A}(\tilde q_\theta^*)$, cf. \eqref{equ:vari problem in ellip} and \eqref{equ:dis-vari-ellip-q}, we have for any  $\varphi\in H_0^1(\Omega)$,
\begin{align*}
 \big((q^\dag-  &\P01(\tilde q_\theta^*))\nabla u^\dagger,\nabla\varphi\big)
= \big ((q^\dag- \P01(\tilde q_\theta^*))\nabla u^\dagger,\nabla(\varphi-P_h\varphi)\big) + \big( (q^\dag- \P01(\tilde q_\theta^*))\nabla u^\dagger,\nabla P_h\varphi\big) \\
=& -\big(\nabla\cdot((q^\dag- \P01(\tilde q_\theta^*))\nabla u^\dagger),\varphi-P_h\varphi\big)+\big( \P01(\tilde q_\theta^*)\nabla(\tilde{u}_h(\P01(\tilde q_\theta^*))-u^\dagger),\nabla P_h\varphi\big)  \\
& + [\big( \P01(\tilde q_\theta^*)\nabla \tilde{u}_h(\P01(\tilde q_\theta^*)),\nabla P_h\varphi\big)_h - \big( \P01(\tilde q_\theta^*)\nabla \tilde{u}_h(\P01(\tilde q_\theta^*)),\nabla P_h\varphi\big)]  =: {\rm I + II + III}.
\end{align*}
Next we set $\varphi\equiv\frac{q^\dag- \P01(\tilde q_\theta^*)}{q^\dag}u^\dag$ in the identity and bound the three terms separately. By the stability estimate \eqref{eqn:P01-stab} of the operator $P_\mathcal{A}$ and Lemmas \ref{lem:pri grad qN* quad} and \ref{lem:quad-err-H1}, we have
\begin{align*}
\|\nabla  \P01(\tilde q_\theta^*)\|^2_{L^2\II}
&\le \| \nabla  \tilde q_\theta^*\|^2_{L^2\II}
=Q_h(|\nabla\tilde q_\theta^*|^2) +  [\| \nabla{\tilde q}_\theta^*\|^2_{L^2(\Omega)}-Q_h(|\nabla\tilde q_\theta^*|^2)]\\
&\le c (\gamma^{-1}\eta^2 \, + 2^{-2n}h^2R^{4L}W^{4L-4}).
\end{align*}
Thus we can bound $\|\nabla \varphi\|_{L^2(\Omega)}$ by
\begin{equation}\label{eqn:nablaphi-quad}
\|\nabla \varphi\|_{L^2(\Omega)}\leq c(1+\|\nabla  \P01(\tilde q_\theta^*)\|_{L^2(\Omega)})
%\leq c(1+\gamma^{-1}\eta^2 \, + 2^{-2n}h^2R^{4L}\mathcal{W}^{4L-4})^\frac12 =:
\leq c\zeta^\frac12.
\end{equation}
Repeating the argument of Theorem \ref{thm:error-ellip} and applying Lemma \ref{lem:pri grad qN* quad} yield
\begin{align*}
|{\rm I}| &\leq ch(1+\|\nabla\P01{q}_\theta^*\|^2_{L^2(\Omega)})\leq ch\zeta,\\
|{\rm II}| &\le  c\big(1+\|\nabla \P01(q_\theta^*)\|_{L^2(\Omega)}\big)\|\nabla(\tilde u_h(\P01(q_\theta^*))-u^\dagger)\|_{L^{2}\II} \le c \min(h^{-1}\eta+h,1) \zeta^\frac12.
\end{align*}
Next from Lemma \ref{lem:quad-error} (with $p=1$), the stability of $P_\mathcal{A}$ and the bound \eqref{eqn:dvthe}, we deduce
\begin{align*}
|{\rm III}|&\leq c2^{-n}h\|\P01(\tilde q_\theta^*)\|_{W^{1,\infty}(\Omega)}\|\nabla\tilde u_h(\P01(\tilde q_\theta^*))\|_{L^2(\Omega)}
\|\nabla P_h \varphi\|_{L^2(\Omega)}\\
&\leq  c 2^{-n}h  \zeta^{\frac12}
(\|  \P01( \tilde q_\theta^*) \|_{L^\infty\II} + \| \nabla \P01(\tilde q_\theta^*) \|_{L^\infty\II})\\
&\leq  c 2^{-n}h \zeta^{\frac12}
(1 + \| \nabla \tilde q_\theta^* \|_{L^\infty\II})
\leq c 2^{-n}h R^LW^{L} \zeta ^\frac12.
\end{align*}
The proof of the second assertion is identical with that of Theorem \ref{thm:error-ellip}.
\end{proof}

\begin{remark}
Theorem \ref{thm:error-ellip-q} indicates that the error estimate in the presence of numerical quadrature is similar to the case of exact integration, provided that the quadrature error is sufficiently small. The quadrature error involves a factor $R^{4L}W^{4L-1}$, which can be large for deep NNs, and hence it may require a large $n$ to compensate its influence on the reconstruction $\P01(\tilde q_\theta^*)$. Indeed, one may take $2^{-2n} h^2 R^{4L}{W}^{4L-4}=\mathcal{O}(1)$. This and the choice $\tilde{\theta}^*\in \mathfrak{P}_{\infty,\epsilon} $, i.e., $L=\mathcal{O}(\log(d+2))$, $N_\theta=\mathcal{O}(\epsilon^{-\frac{d}{1-\mu}})$ and $R=\mathcal{O}(\epsilon^{-2-\frac{2+3d}{1-\mu}})$ directly imply $n=\mathcal{O}(d|\log \epsilon|)$. This estimate is a bit pessimistic. In practice, the choice $n=0$ suffices the desired accuracy.
\end{remark}

\section{Parabolic inverse problem}
\label{sec:parabolic}

In this section, we extend the approach to the parabolic case:
\begin{equation}\label{equ:parabolic problem}
	\left\{
	\begin{aligned}
		\partial_tu-\nabla\cdot(q\nabla u) &= f, \ &\mbox{in}&\ \Omega\times(0,T), \\
		u&=0, \ &\mbox{on}&\ \partial\Omega\times(0,T), \\
		u(0)&=u_0, \ &\mbox{in}&\ \Omega.
	\end{aligned}
	\right.
\end{equation}
Like before, we are given the observation $z^\delta$ on the space-time domain $\Omega\times (T_0,T)$ (with $0\leq T_0<T$):
\begin{equation*}
		z^\delta(x,t) = u(q^\dag)(x,t) + \xi(x,t), \quad (x,t)\in \Omega\times(T_0,T),
\end{equation*}
where $\xi$ denotes the measurement noise, with a noise level
$\delta=\|u(q^\dagger)-z^\delta\|_{L^2(T_0,T;L^2(\Omega))}$.
We aim at recovering the coefficient $q\in \mathcal{A}$ from $z^\delta$.
Below we use extensively Bochner spaces: for a Banach space $X$ (with norm $\|\cdot\|_X$), we define
$W^{m,p}(0,T;X) = \{v: v(t)\in X\ \mbox{for a.e.}\ t\in(0,T)\ \mbox{and}\ \|v\|_{W^{m,p}(0,T;X)}<\infty \}$.
with  $\|v\|_{W^{m,p}(0,T;X)} = (\sum_{j=0}^m\int_0^T\|u^{(j)}(t)\|_X^p)^\frac{1}{p}$,
The space $L^\infty(0,T;X)$ is defined similarly.

\subsection{The regularized problem and its hybrid approximation}

To recover the coefficient $q$ in the model \eqref{equ:parabolic problem}, we formulate a numerical scheme by
\begin{equation}\label{equ:Tikh-para}
\min_{q\in \mathcal{A}} J_{\gamma}(q)=\frac{1}{2}\|u(q)(t)-z^{\delta}(t)\|^2_{L^2(T_0,T;L^2(\Omega))}+\frac\gamma2\|\nabla q\|_{L^2(\Omega)}^2,
\end{equation}
where $u(t)\equiv u(q)(t)\in H^1_0(\Omega)$ with $u(0)=u_0$ satisfies
\begin{equation}\label{equ:vari-para}	
	(\partial_tu(t),\varphi)+(q\nabla u(t),\nabla\varphi)=(f,\varphi), \quad\forall \varphi\in H^1_0(\Omega),\ \mbox{a.e.}\ t\in(0,T).
\end{equation}

Next we describe the hybrid NN-FEM discretization of problem \eqref{equ:Tikh-para}--\eqref{equ:vari-para}. For the space discretization, we employ NNs and Galerkin FEM to approximate the diffusion coefficient $q$ and state $u$, respectively.
For the time discretization, we employ the backward Euler time-stepping scheme \cite{Thomee:2006}: We divide the time interval $(0,T)$ into $N$ uniform subintervals with a time step size $\tau$ and grid points $t_n=n\tau$, $n=0,\ldots,N$. Next we denote by $v^n=v(t_n)$ and define the backward difference quotient $\bar{\partial}_\tau$ by
$\bar{\partial}_\tau v^n:=\tau^{-1}(v^n-v^{n-1}).$
Further we assume $T_0=N_0\tau$ for some $N_0\in\mathbb{N}$. For a sequence of functions $\{v^n\}_{n={N_0}}^N\subset X$, we defined a discrete norm $\|(v^n)_{N_0}^N\|_{\ell^2(X)}$ by
\begin{equation*}
\|(v^n)_{N_0}^N\|_{\ell^2(X)}=\Big(\tau\sum_{n=N_0}^N\|v^n\|_X^2\Big)^\frac12.
\end{equation*}

With these preliminaries, the hybrid NN-FEM scheme for problem \eqref{equ:Tikh-para}--\eqref{equ:vari-para} reads
\begin{equation}\label{equ:dis-min-para}
	\min_{\theta\in\mathfrak{P}_{p,\epsilon}} J_{\gamma,h,\tau}(q_\theta)=\frac{1}{2}\|(U^n_h(\P01(q_\theta))-z_n^{\delta})_{N_0}^N\|^2_{\ell^2(L^2(\Omega))}+\frac\gamma2\|\nabla q_\theta\|_{L^2(\Omega)}^2,
\end{equation}
where $z_n^\delta:=\tau^{-1}\int_{t_{n-1}}^{t_n} z^\delta(t) \mathrm{d}t$, and $U_h^n\equiv U^n_h(q_\theta)\in X_h$ with $U_h^0(q_\theta)=P_hu_0$ satisfies
\begin{equation}\label{equ:dis-vari-para}	
	(\bar{\partial}_\tau U_h^n,\varphi_h)+(\P01(q_\theta)\nabla U^n_h,\nabla\varphi_h)=(f(t_n),\varphi_h), \quad\forall \varphi_h\in X_h,\ n=1,\ldots,N.
\end{equation}
For any $\gamma>0$,
a standard argument yields the well-posedness of problems \eqref{equ:Tikh-para}--\eqref{equ:vari-para} and \eqref{equ:dis-min-para}--\eqref{equ:dis-vari-para}; Let $q_\theta^*$ be the NN realization of a minimizer $\theta^*$ to problem \eqref{equ:dis-min-para}--\eqref{equ:dis-vari-para}. See the work \cite{KeungZou:1998} for relevant discussions on the pure FEM approximation. It also includes a detailed convergence analysis of the FEM approximation to a global minimizer of problem \eqref{equ:Tikh-para}--\eqref{equ:vari-para} as the discretization parameters $h,\tau\to0^+$. See also \cite{wang2010error,jin2021error} for relevant error analysis.

\subsection{Error analysis}
Now we provide an error analysis of the approximation $\P01(q^*_\theta)$, under the following assumption.
\begin{assumption}\label{assum: para}
For some $p\ge\max(2,d+\mu)$ with $\mu>0$,
$q^\dag\in W^{2,p}(\Omega)\cap \mathcal{A}$, $u_0\in H^2(\Omega)\cap H_0^1(\Omega)\cap W^{1,\infty}(\Omega)$ and $f\in L^{\infty}(0,T;L^{\infty}(\Omega))\cap C^1([0,T];L^2(\Omega))\cap W^{2,1}(0,T;L^2(\Omega))$.
\end{assumption}
Under Assumption \ref{assum: para}, the following regularity estimates hold on $u^\dag\equiv u(q^\dag)$ \cite[p. 128]{jin2021error}: for any $r,q\in(1,\infty)$
\begin{align}
&\partial_t u^\dag\in L^r(0,T;L^q(\Omega)), \Delta u^\dag \in L^r(0,T;L^{q}(\Omega))\mbox{ and } u^\dag\in L^\infty(0,T;W^{1,\infty}(\Omega));\label{eqn:reg-para1}\\
&\|u^\dag(t)\|_{H^2(\Omega)}+\|\partial_t u^\dag(t)\|_{L^2(\Omega)}+t\|\partial_{tt}u^\dag(t)\|_{L^2(\Omega)}\leq c,\mbox{ a.e. }t\in(0,T].\label{eqn:reg-para2}
\end{align}

The next lemma gives the existence of an approximant in the discrete admissible set $\mathfrak{P}_{p,\epsilon}$.
\begin{lemma}\label{lemma:err uq-uh in para}
Let Assumption \ref{assum: para} hold. Then for $\epsilon>0$, there exists  $\theta_\epsilon\in\mathfrak{P}_{p,\epsilon}$ such that
\begin{equation*}
\|(u^\dag(t_n)-U^n_h(\P01(q_{\theta_\epsilon})))_1^N\|^2_{\ell^2(L^2(\Omega))}\leq  c(\tau^2+h^4+\epsilon^2).
\end{equation*}
\end{lemma}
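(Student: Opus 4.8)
The plan is to mirror the proof of Lemma \ref{lem:uq-uh}, splitting the error into a coefficient-perturbation part and a pure fully-discrete Galerkin part, since the NN only approximates the space-dependent coefficient $q^\dag$ and its construction is therefore unchanged by the passage to the parabolic setting. First I would invoke the embedding $W^{1,p}(\Omega)\hookrightarrow L^\infty(\Omega)$ (valid because $p\ge\max(2,d+\mu)$) together with Lemma \ref{lem:tanh-approx} applied to $q^\dag\in W^{2,p}(\Omega)$ to produce $\theta_\epsilon\in\mathfrak{P}_{p,\epsilon}$ whose realization $q_{\theta_\epsilon}$ obeys $\|q^\dag-q_{\theta_\epsilon}\|_{H^1(\Omega)}+\|q^\dag-q_{\theta_\epsilon}\|_{L^\infty(\Omega)}\le c\epsilon$; the stability bound \eqref{eqn:P01-approx} of $\P01$ then gives $\|q^\dag-\P01(q_{\theta_\epsilon})\|_{L^\infty(\Omega)}\le c\epsilon$.

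Next I would split, for each $n$,
\[
u^\dag(t_n)-U^n_h(\P01(q_{\theta_\epsilon}))=\big(u^\dag(t_n)-U^n_h(q^\dag)\big)+\big(U^n_h(q^\dag)-U^n_h(\P01(q_{\theta_\epsilon}))\big).
\]
The first bracket is controlled by the classical fully-discrete error estimate for the backward-Euler Galerkin scheme with the exact coefficient $q^\dag$: under the regularity \eqref{eqn:reg-para1}--\eqref{eqn:reg-para2} guaranteed by Assumption \ref{assum: para}, one has $\|(u^\dag(t_n)-U^n_h(q^\dag))_1^N\|_{\ell^2(L^2(\Omega))}\le c(\tau+h^2)$ (see \cite{Thomee:2006}), which contributes $c(\tau^2+h^4)$ after squaring.

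For the second bracket, set $\varrho_h^n:=U^n_h(q^\dag)-U^n_h(\P01(q_{\theta_\epsilon}))\in X_h$, noting $\varrho_h^0=0$ since both discrete states start from $P_hu_0$. Subtracting the two instances of \eqref{equ:dis-vari-para} and regrouping the stiffness terms yields the perturbed discrete heat equation
\[
(\bar{\partial}_\tau\varrho_h^n,\varphi_h)+(\P01(q_{\theta_\epsilon})\nabla\varrho_h^n,\nabla\varphi_h)=-\big((q^\dag-\P01(q_{\theta_\epsilon}))\nabla U_h^n(q^\dag),\nabla\varphi_h\big),\quad\forall\varphi_h\in X_h.
\]
Testing with $\varphi_h=\varrho_h^n$, using $(\bar{\partial}_\tau\varrho_h^n,\varrho_h^n)\ge\tfrac12\bar{\partial}_\tau\|\varrho_h^n\|_{L^2(\Omega)}^2$, the coercivity $(\P01(q_{\theta_\epsilon})\nabla\varrho_h^n,\nabla\varrho_h^n)\ge c_0\|\nabla\varrho_h^n\|_{L^2(\Omega)}^2$, and Young's inequality on the right-hand side (bounded by $\|q^\dag-\P01(q_{\theta_\epsilon})\|_{L^\infty(\Omega)}\|\nabla U_h^n(q^\dag)\|_{L^2(\Omega)}\|\nabla\varrho_h^n\|_{L^2(\Omega)}$), then multiplying by $2\tau$ and summing over $n=1,\dots,m$ (telescoping the difference quotient and using $\varrho_h^0=0$) produces
\[
\|\varrho_h^m\|_{L^2(\Omega)}^2+c_0\tau\sum_{n=1}^m\|\nabla\varrho_h^n\|_{L^2(\Omega)}^2\le c\epsilon^2\,\tau\sum_{n=1}^N\|\nabla U_h^n(q^\dag)\|_{L^2(\Omega)}^2.
\]
The discrete energy stability of the scheme, together with $u_0\in H^2(\Omega)\cap H_0^1(\Omega)$ and $f\in L^\infty(0,T;L^\infty(\Omega))\subset L^2(0,T;L^2(\Omega))$, bounds the sum on the right by $c$, so $\max_{1\le m\le N}\|\varrho_h^m\|_{L^2(\Omega)}^2\le c\epsilon^2$ and hence $\|(\varrho_h^n)_1^N\|_{\ell^2(L^2(\Omega))}^2=\tau\sum_{n=1}^N\|\varrho_h^n\|_{L^2(\Omega)}^2\le c\epsilon^2$. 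Combining the two brackets by the triangle inequality gives the stated bound $c(\tau^2+h^4+\epsilon^2)$.

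The routine parts are the NN approximation step (verbatim from Lemma \ref{lem:uq-uh}) and the coefficient-perturbation energy estimate, which is the natural parabolic analogue of the elliptic argument. The step demanding the most care is the fully-discrete bound $c(\tau+h^2)$ for the exact coefficient: backward-Euler error estimates are sensitive to the regularity of $u^\dag$ near $t=0$, and the weighted control $t\|\partial_{tt}u^\dag(t)\|_{L^2(\Omega)}\le c$ in \eqref{eqn:reg-para2} is precisely what is needed to absorb the initial temporal layer. I would therefore cite the corresponding estimate from the parabolic FEM literature rather than reprove it, and only verify that Assumption \ref{assum: para} supplies the hypotheses \eqref{eqn:reg-para1}--\eqref{eqn:reg-para2} it requires.
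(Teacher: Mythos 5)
Your proof follows essentially the same route as the paper's: the identical NN approximation step inherited from Lemma \ref{lem:uq-uh}, the same splitting into the fully-discrete Galerkin error for the exact coefficient $q^\dag$ (for which the paper cites \cite[Lemma 4.2]{jin2021error} rather than \cite{Thomee:2006} for the bound $c(\tau+h^2)$) plus a coefficient-perturbation term controlled by testing the discrete error equation with $\varrho_h^n$, summing in time, and using $\|(\nabla U_h^n(q^\dag))_1^N\|_{\ell^2(L^2(\Omega))}\le c$. The only cosmetic difference is that you close the energy estimate via Young's inequality and an $\ell^\infty$-in-time bound on $\varrho_h^n$, while the paper applies a discrete Cauchy--Schwarz across the time sum and Poincar\'e's inequality to obtain the same $O(\epsilon)$ bound in $\ell^2(L^2(\Omega))$.
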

\begin{proof}
By  the argument of Lemma \ref{lem:uq-uh}, we can find  $\theta_\epsilon\in\mathfrak{P}_{p,\epsilon}$  such that
the estimates \eqref{eqn:qeps-W1p} and \eqref{eqn:qeps-W1p-1} hold.
Next we bound $\varrho^n_h:=U^n_h(\P01(q_{\theta_\epsilon}))-U^n_h(q^\dagger)$. It follows from the weak formulations of $U_h^n(\P01(q_{\theta_\epsilon}))$ and $U_h^n(q^\dag)$, cf. \eqref{equ:dis-vari-para}, that $\varrho_h^n$ satisfies $\varrho_h^0=0$ and
\begin{equation*}
(\bar{\partial}_\tau \varrho_h^n,\varphi_h)+(\P01(q_{\theta_\epsilon})\nabla \varrho^n_h,\nabla\varphi_h)=\big((q^\dagger-\P01(q_{\theta_\epsilon}))\nabla U_h^n(q^\dagger),\nabla\varphi_h\big), \quad\forall \varphi_h\in X_h,  \,\,n=1,2,\ldots,N.
\end{equation*}
Setting $\varphi_h=2\varrho_h^n$ into this identity, and then applying H\"{o}lder's inequality lead to
\begin{align*}
&\tau^{-1}(\|\varrho_h^n\|^2_{L^2(\Omega)}-\|\varrho_h^{n-1}\|^2_{L^2(\Omega)})+ 2c_0\|\nabla \varrho_h^n\|_{L^2(\Omega)}^2
\leq 2\|q^\dag-\P01(q_{\theta_\epsilon})\|_{L^\infty(\Omega)}\|\nabla U_h^n(q^\dag)\|_{L^2(\Omega)}\|\nabla \varrho_h^n\|_{L^2(\Omega)}.
\end{align*}
Summing the inequality over $n$ from $1$ to $N$, noting $\varrho_h^0=0$ and applying    \eqref{eqn:qeps-W1p-1} give
\begin{align*}
\|\varrho_h^N\|^2_{L^2(\Omega)}+2c_0\|(\nabla \varrho_h^n)_1^N\|_{\ell^2(L^2(\Omega))}^2&\leq 2\|q^\dag-\P01(q_{\theta_\epsilon})\|_{L^\infty(\Omega)}\tau\sum_{n=1}^N\|\nabla U_h^n(q^\dagger)\|_{L^2(\Omega)}\|\nabla\varrho_h^n\|_{L^2(\Omega)}\\
&\leq c\epsilon\|(\nabla U_h^n(q^\dag))_1^N\|_{\ell^2(L^2(\Omega))}\|(\nabla \varrho_h^n)_1^N\|_{\ell^2(L^2(\Omega))}.
\end{align*}
Since $\|(\nabla U_h^n(q^\dagger))_1^N\|_{\ell^2(L^2(\Omega))}\leq c$ \cite[Lemma 6.2]{wang2010error}, we obtain
$\|(\varrho_h^n)_1^N\|_{\ell^2(H^1(\Omega))} \leq c\epsilon.$
This and the estimate $\|(u^\dag(t_n)-U^n_h(q^\dag))_1^N\|^2_{\ell^2(L^2(\Omega))}\leq  c(\tau^2+h^4)$ \cite[Lemma 4.2]{jin2021error}
complete the proof.
\end{proof}

The next lemma gives an important \textit{a priori} bound.
\begin{lemma}\label{lem:pri grad qN* in para}
Let Assumption \ref{assum: para} hold. For any $\epsilon>0$, let $\theta^*\in\mathfrak{P}_{p,\epsilon}$ be a minimizer to problem \eqref{equ:dis-min-para}-\eqref{equ:dis-vari-para} and $q^*_\theta$ its NN realization. Then the following estimate holds
 \begin{equation*}
 \|(u^\dag(t_n)-U_h^n(\P01(q^*_\theta)))_{N_0}^N\|^2_{\ell^2(L^2(\Omega))}+\gamma\|\nabla \P01(q^*_\theta)\|^2_{L^2(\Omega)}\leq c(\tau^2+h^4+\epsilon^2+\delta^2+\gamma).
\end{equation*}
\end{lemma}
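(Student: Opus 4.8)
The plan is to mirror the elliptic argument of Lemma \ref{lem:pri grad qN*}, replacing the single variational problem by the backward Euler time-stepping scheme and the $L^2(\Omega)$ norm by the discrete $\ell^2(L^2(\Omega))$ norm. The two ingredients I expect to drive the proof are (i) the existence of a good approximant $\theta_\epsilon$ furnished by Lemma \ref{lemma:err uq-uh in para}, together with the bound $\|q_{\theta_\epsilon}\|_{H^1(\Omega)}\le c$ coming from \eqref{eqn:qeps-W1p}, and (ii) the minimizing property of $\theta^*$, namely $J_{\gamma,h,\tau}(q^*_\theta)\le J_{\gamma,h,\tau}(q_{\theta_\epsilon})$.

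First I would use the minimizing property to write
\begin{align*}
&\|(U_h^n(\P01(q^*_\theta))-z_n^\delta)_{N_0}^N\|^2_{\ell^2(L^2(\Omega))}+\gamma\|\nabla q_\theta^*\|^2_{L^2(\Omega)}\\
&\quad\le\|(U_h^n(\P01(q_{\theta_\epsilon}))-z_n^\delta)_{N_0}^N\|^2_{\ell^2(L^2(\Omega))}+\gamma\|\nabla q_{\theta_\epsilon}\|^2_{L^2(\Omega)}.
\end{align*}
To control the right-hand side I would insert $u^\dag(t_n)$ via the triangle inequality and bound the resulting terms by Lemma \ref{lemma:err uq-uh in para} (giving $c(\tau^2+h^4+\epsilon^2)$), by the data accuracy, and by $\gamma\|\nabla q_{\theta_\epsilon}\|^2_{L^2(\Omega)}\le c\gamma$. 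Here one must be careful that the noise level $\delta$ is measured in $L^2(T_0,T;L^2(\Omega))$ against the continuous data, whereas the discrete objective uses the time averages $z_n^\delta$; so I would note that Jensen's inequality gives $\|(z_n^\delta-u^\dag(t_n))_{N_0}^N\|_{\ell^2(L^2(\Omega))}\le c(\delta+\tau)$ up to the time-discretization of $u^\dag$, whose contribution is already absorbed into the $\tau^2$ term using the regularity \eqref{eqn:reg-para2}. This yields the bound $c(\tau^2+h^4+\epsilon^2+\delta^2+\gamma)$ on the full discrete least-squares-plus-penalty functional at $q^*_\theta$.

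Next I would convert this into the stated estimate by a second triangle inequality, replacing $z_n^\delta$ by $u^\dag(t_n)$ inside the $\ell^2(L^2(\Omega))$ norm and again absorbing the data and time-averaging errors, exactly as in the last display of the proof of Lemma \ref{lem:pri grad qN*}. Finally, the bound on $\|\nabla\P01(q^*_\theta)\|_{L^2(\Omega)}$ follows from $\|\nabla q_\theta^*\|_{L^2(\Omega)}$ via the stability estimate \eqref{eqn:P01-stab} together with $\P01(q^*_\theta)\in\mathcal{A}$. The main obstacle I anticipate is purely bookkeeping rather than conceptual: reconciling the continuous noise measurement $\delta=\|u(q^\dagger)-z^\delta\|_{L^2(T_0,T;L^2(\Omega))}$ with the piecewise-averaged discrete data $z_n^\delta$ appearing in \eqref{equ:dis-min-para}, ensuring the time-averaging error and the backward-Euler consistency error are both controlled at order $\tau$ so that they fold cleanly into the $\tau^2$ term via the parabolic regularity \eqref{eqn:reg-para1}--\eqref{eqn:reg-para2}. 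Everything else transfers verbatim from the elliptic case.
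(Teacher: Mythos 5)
Your proposal follows the paper's proof essentially verbatim: the same comparison via the minimizing property against the approximant $q_{\theta_\epsilon}$ from Lemma \ref{lemma:err uq-uh in para} (with $\|q_{\theta_\epsilon}\|_{H^1(\Omega)}\le c$ from \eqref{eqn:qeps-W1p}), the same two triangle inequalities, and the same final appeal to the stability estimate \eqref{eqn:P01-stab} with $\P01(q_\theta^*)\in\mathcal{A}$. The only difference is cosmetic: the data-averaging bound $\|(u^\dag(t_n)-z_n^\delta)_{N_0}^N\|^2_{\ell^2(L^2(\Omega))}\le c(\tau^2+\delta^2)$, which you correctly sketch via Jensen's inequality and the regularity \eqref{eqn:reg-para2}, is exactly what the paper imports by citing \cite[Lemma 4.1]{jin2021error}.
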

\begin{proof}
Let $q_{\theta_\epsilon}$ be the NN realization of a parameter $\theta_\epsilon \in \mathfrak{P}_{p,\epsilon}$ satisfying \eqref{eqn:qeps-W1p} and \eqref{eqn:qeps-W1p-1}, which implies also $\|q_{\epsilon}\|_{H^1(\Omega)}\leq c$. Under Assumption \ref{assum: para}, the following estimate holds \cite[Lemma 4.1]{jin2021error}
\begin{equation*}		
\|(u^\dag(t_n)-z^\delta_n)_{N_{0}}^N\|^2_{\ell^2(L^2(\Omega))}\leq c(\tau^2+\delta^2).
\end{equation*}
Then by Lemma \ref{lemma:err uq-uh in para} and the minimizing property of $q_\theta^*$, i.e.,
$J_{\gamma,h,\tau}(q^*_\theta)\leq  J_{\gamma,h,\tau}(q_{\theta_\epsilon})$, we derive
\begin{align*}		
&\|(U^n_h(\P01(q^*_\theta))-z_n^\delta)_{N_{0}}^N\|^2_{\ell^2(L^2(\Omega))}+\gamma\|\nabla q^*_\theta\|^2_{L^2(\Omega)}
\leq \|(U^n_h(\P01(q_{\theta_\epsilon}))-z^\delta_n)_{N_{0}}^N\|^2_{\ell^2(L^2(\Omega))}+\gamma\|\nabla q_{\theta_\epsilon}\|^2_{L^2(\Omega)} \\
\leq  & c\big(\|(U^n_h(\P01(q_{\theta_\epsilon}))-u^\dag(t_n))_{N_0}^N\|^2_{\ell^2(L^2(\Omega))}+\|(u^\dag(t_n)-z^\delta_n)_{N_{0}}^N\|^2_{\ell^2(L^2(\Omega))}+\gamma\big)
\leq c(\tau^2+h^4+\epsilon^2+\delta^2+\gamma),
\end{align*}
Then by the triangle inequality, we have
\begin{align*}
\|(u^\dag(t_n)-&U^n_h(\P01(q^*_\theta)))_{N_{0}}^N\|^2_{\ell^2(L^2(\Omega))}+\gamma\|\nabla q^*_\theta\|^2_{L^2(\Omega)}
\leq c\|(u^\dagger(t_n)-z_n^\delta)_{N_{0}}^N\|^2_{\ell^2(L^2(\Omega))}\\
&+c\|(z_n^\delta-U^n_h(\P01(q^*_\theta)))_{N_{0}}^N\|^2_{\ell^2(L^2(\Omega))}
 +\gamma\|\nabla q^*_\theta\|^2_{L^2(\Omega)}
\leq c(\tau^2+h^4+\epsilon^2+\delta^2+\gamma).
\end{align*}
Finally, the bound on $\| \nabla \P01(q_\theta^*) \|_{L^2(\Omega)}$ follows from the stability estimate \eqref{eqn:P01-stab}.
\end{proof}

Now we can state an error estimate on the NN  approximation $q_\theta^*$.
\begin{theorem}\label{thm: error in para}
Let Assumption \ref{assum: para} hold.
Fix any $\epsilon>0$, and let $\theta^*\in\mathfrak{P}_{p,\epsilon}$ be a minimizer to problem \eqref{equ:dis-min-para}-\eqref{equ:dis-vari-para} and $q^*_\theta$ its NN realization.
Then with $\eta^2:=\tau^2+h^4+\epsilon^2+\delta^2+\gamma$, there holds
\begin{align*}
&\quad \tau^3\sum_{j=N_{0}+1}^N\sum_{i=N_{0}+1}^j\sum_{n=i}^j\int_{\Omega}\Big(\frac{q^\dag-\P01(q_\theta^*)}{q^\dag}\Big)^2\big(q^\dag |\nabla u^\dagger(t_n)|^2+\big(f(t_n)-\partial_tu^\dagger(t_n)\big)u^\dagger(t_n)\big)\ \mathrm{d}x\\
&\leq c( \min(h^{-1}\eta + h,1) + h \gamma^{-\frac12}\eta)
\gamma^{-\frac12}\eta.
\end{align*}	
\end{theorem}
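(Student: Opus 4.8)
The plan is to mimic the argument of Theorem \ref{thm:error-ellip} time level by time level, the essential new ingredient being the control of the \emph{discrete} time derivative via a double summation by parts. Throughout write $P:=\P01(q_\theta^*)$ and $g:=(q^\dag-P)/q^\dag$, so that $\|g\|_{L^\infty(\Omega)}\le c$ by the box constraint. For each $n$ introduce the test function $\varphi_n:=g\,u^\dag(t_n)\in H_0^1(\Omega)$; arguing as for \eqref{eqn:bound-dphi} and invoking Lemma \ref{lem:pri grad qN* in para} (together with $\eta^2\ge\gamma$, so $1\le c\gamma^{-1}\eta^2$) gives $\|\nabla\varphi_n\|_{L^2(\Omega)}\le c(1+\|\nabla P\|_{L^2(\Omega)})\le c\gamma^{-\frac12}\eta$ uniformly in $n$. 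Since \eqref{equ:vari-para} at $t=t_n$ reads $(q^\dag\nabla u^\dag(t_n),\nabla\varphi)=(f(t_n)-\partial_tu^\dag(t_n),\varphi)$, i.e.\ $u^\dag(t_n)$ solves an elliptic problem with source $f(t_n)-\partial_tu^\dag(t_n)$, the Bonito identity \cite[Theorem 2.2]{bonito2017diffusion} yields, for each $n$,
\[
\int_{\Omega}g^2\big(q^\dag|\nabla u^\dag(t_n)|^2+(f(t_n)-\partial_tu^\dag(t_n))u^\dag(t_n)\big)\,\d x=2\big((q^\dag-P)\nabla u^\dag(t_n),\nabla\varphi_n\big),
\]
so it suffices to bound the $\tau^3\sum_j\sum_i\sum_n$-weighted sum of the right-hand side.

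Next, splitting $\varphi_n=(\varphi_n-P_h\varphi_n)+P_h\varphi_n$ and using the weak forms \eqref{equ:vari-para} and \eqref{equ:dis-vari-para}, decompose $((q^\dag-P)\nabla u^\dag(t_n),\nabla\varphi_n)={\rm I}_n+{\rm II}_n+{\rm III}_n$, where ${\rm I}_n:=-(\nabla\cdot((q^\dag-P)\nabla u^\dag(t_n)),\varphi_n-P_h\varphi_n)$, ${\rm II}_n:=(P\nabla(U_h^n(P)-u^\dag(t_n)),\nabla P_h\varphi_n)$, and the new term ${\rm III}_n:=-(\partial_tu^\dag(t_n)-\bar\partial_\tau U_h^n,P_h\varphi_n)$. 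The terms ${\rm I}_n,{\rm II}_n$ are estimated exactly as in Theorem \ref{thm:error-ellip}: \eqref{inequ: L2 proj approx} and the $H^2$-regularity in \eqref{eqn:reg-para2} for ${\rm I}_n$; the inverse inequality and $L^2$-projection splitting for ${\rm II}_n$, trading the $\ell^2(L^2)$ state error (bounded by Lemma \ref{lem:pri grad qN* in para}) for an $\ell^2(H^1)$ bound of order $\min(h^{-1}\eta+h,1)$. Since the total weight $\tau^2\sum_{i\le j}1\approx\tfrac12(T-T_0)^2=\mathcal{O}(1)$, a Cauchy--Schwarz over the window $[i,j]\subseteq[N_0,N]$ turns these into the contributions $h\gamma^{-\frac12}\eta$ and $\min(h^{-1}\eta+h,1)$, each carrying the factor $\gamma^{-\frac12}\eta$ from $\|\nabla\varphi_n\|_{L^2(\Omega)}$, matching the claimed bound.

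The crux is ${\rm III}_n$, and this is exactly where the triple-index structure is used. The discrete time derivative $\bar\partial_\tau U_h^n$ of the fully discrete solution admits no useful pointwise-in-time bound, so I would not estimate it term by term. Instead, write $\partial_tu^\dag(t_n)-\bar\partial_\tau U_h^n=r^n+\bar\partial_\tau e^n$ with truncation error $r^n:=\partial_tu^\dag(t_n)-\bar\partial_\tau u^\dag(t_n)$ and state error $e^n:=u^\dag(t_n)-U_h^n(P)$, and perform summation by parts in the inner sum,
\[
\sum_{n=i}^j\tau(\bar\partial_\tau e^n,P_h\varphi_n)=(e^j,P_h\varphi_j)-(e^{i-1},P_h\varphi_i)-\sum_{n=i}^{j-1}(e^n,P_h(\varphi_{n+1}-\varphi_n)),
\]
which eliminates the discrete derivative at the price of the endpoint values $e^j,e^{i-1}$. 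A single endpoint is not controlled by Lemma \ref{lem:pri grad qN* in para} (which bounds only the $\ell^2(L^2)$ norm of $e^n$), so I would average over the two outer sums: the middle sum $\tau\sum_i$ turns $(e^{i-1},P_h\varphi_i)$ into $(\tau\sum_i\|e^{i-1}\|_{L^2}^2)^{\frac12}(\tau\sum_i\|P_h\varphi_i\|_{L^2}^2)^{\frac12}\le c\eta$, and the outer sum $\tau\sum_j$ likewise tames $(e^j,P_h\varphi_j)$, the leftover factors $(N-i+1)\tau$, $(j-N_0)\tau$ being $\mathcal{O}(1)$. The truncation term is absorbed via $\int_0^Tt\|\partial_{tt}u^\dag\|_{L^2}\,\d t\le c$ from \eqref{eqn:reg-para2}, and the telescoping remainder via $\|\varphi_{n+1}-\varphi_n\|_{L^2}\le c\tau\|\bar\partial_\tau u^\dag(t_{n+1})\|_{L^2}$ and \eqref{eqn:reg-para2}.

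The main obstacle is precisely this control of the discrete time derivative: the nonlocal-in-time double summation by parts is what forces the unusual $\tau^3$-weighted triple sum, and extra care is needed because the test function $\varphi_n$ itself varies with $n$, generating the telescoping remainder that must be absorbed using the parabolic regularity \eqref{eqn:reg-para2}. Collecting the three contributions then yields the stated estimate.
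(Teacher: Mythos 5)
Your proposal is correct and follows essentially the same route as the paper's proof: the same test function $\varphi^n=\frac{q^\dag-P_{\mathcal{A}}(q_\theta^*)}{q^\dag}u^\dag(t_n)$ together with the identity of \cite[Theorem 2.2]{bonito2017diffusion} applied to the elliptic problem with source $f(t_n)-\partial_t u^\dag(t_n)$, the same three-term decomposition with ${\rm I}^n$ and ${\rm II}^n$ handled exactly as in Theorem \ref{thm:error-ellip} (including the $\min(h^{-1}\eta+h,1)$ via the inverse inequality versus the energy bound), and the same splitting of the time-derivative term into a truncation error plus the discrete derivative of the state error, which is eliminated by summation by parts over the window $[i,j]$ with the outer double sum absorbing the endpoint terms through Cauchy--Schwarz and Lemma \ref{lem:pri grad qN* in para}, and the telescoping remainder controlled by $\max_t\|\partial_t u^\dag(t)\|_{L^2(\Omega)}\le c$. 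The only cosmetic difference is that the paper delegates the truncation-error bound to the argument of \cite[Theorem 4.5]{jin2021error}, whereas you sketch the underlying mechanism (the weight $t\|\partial_{tt}u^\dag(t)\|_{L^2(\Omega)}\le c$ from \eqref{eqn:reg-para2}) directly.
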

\begin{proof}
For any $\varphi\in H_0^1(\Omega)$, the weak formulations of $u^\dag$ and $U_h^n(\P01(q_\theta^*)))$in \eqref{equ:vari-para} and \eqref{equ:dis-vari-para} yield
\begin{align*}
&\quad \big((q^\dag-\P01(q_\theta^*))\nabla u^\dagger(t_n),\nabla\varphi\big) \\
& = \big ((q^\dag-\P01(q_\theta^*))\nabla u^\dagger(t_n),\nabla(\varphi-P_h\varphi)\big) + \big( (q^\dag-\P01(q_\theta^*))\nabla u^\dagger(t_n),\nabla P_h\varphi\big) \\
& = \big((q^\dag-\P01(q_\theta^*))\nabla u^\dagger(t_n),\nabla(\varphi-P_h\varphi)\big)+\big(\P01(q_\theta^*)\nabla(U_h^n(\P01(q_\theta^*))-u^\dagger(t_n)),\nabla P_h\varphi\big)\\
&\quad +\big(q^\dag \nabla u^\dagger(t_n)-\P01(q_\theta^*)\nabla U_h^n(\P01(q_\theta^*)),\nabla P_h\varphi\big) \\
& = -\big(\nabla\cdot\big((q^\dag-\P01(q_\theta^*))\nabla u^\dagger(t_n)\big),\varphi-P_h\varphi\big)+\big(\P01(q_\theta^*)\nabla(U_h^n(\P01(q_\theta^*))-u^\dagger(t_n)),\nabla P_h\varphi\big)\\  &\quad+\big(\bar{\partial}_\tau U_h^n(\P01(q_\theta^*))-\partial_t u^\dagger(t_n), P_h\varphi\big)=: {\rm I}^n + {\rm II}^n + {\rm III}^n.
\end{align*}
Next we set $\varphi\equiv\varphi^n=\frac{q^\dag-\P01(q_\theta^*)}{q^\dag}u^\dagger(t_n)$ in the identity, and bound the three terms separately. Under Assumption \ref{assum: para},
the regularity bound \eqref{eqn:reg-para2} and the box constraint $\P01(q_\theta^*)\in\mathcal{A}$ imply
\begin{equation}\label{eqn:est-fym}
\max_{0\le n \le N}\|\nabla \varphi^n\|_{L^2(\Omega)}\leq c(1+\|\nabla  \P01(q_\theta^*)\|_{L^2(\Omega)}).
\end{equation}
Then repeating the argument for Theorem \ref{thm:error-ellip} and applying Lemma \ref{lem:pri grad qN* in para} lead to
\begin{equation*}
|{\rm I}^n|\leq ch(1+\|\nabla\P01(q_\theta^*)\|_{L^2(\Omega)})\|\nabla \varphi^n\|_{L^2(\Omega)}\leq ch(1+\|\nabla \P01(q_\theta^*)\|^2_{L^2(\Omega)})\leq ch\gamma^{-1}\eta^2.
\end{equation*}
Next, by the Cauchy--Schwarz inequality, the $H^1(\Omega)$-stability of $P_h$, the box constraint $\P01(q_\theta^*)\in\mathcal{A}$ and the estimate \eqref{eqn:est-fym}, we bound the term ${\rm II}^n$ as
\begin{align*}
 |{\rm II}^n|& \le c   \|\nabla(U_h^n(\P01(q_\theta^*))-u^\dag(t_n))\|_{L^2(\Omega)} \|  \nabla P_h\varphi^n\|_{L^2\II}\\
 &\le  c   \|\nabla(U_h^n(\P01(q_\theta^*))-u^\dag(t_n))\|_{L^2(\Omega)} \|  \nabla  \varphi^n\|_{L^2\II} \\
 &\le  c  (1+\| \nabla\P01(q_\theta^*) \|_{L^\II}) \|\nabla(U_h^n(\P01(q_\theta^*))-u^\dag(t_n))\|_{L^2(\Omega)} .
\end{align*}
Then it follows from the inverse estimate in the space $X_h$ \cite[(1.12), p. 4]{Thomee:2006}
and \eqref{inequ: L2 proj approx} that
\begin{align*}
&\quad\tau\sum_{n=N_{0}}^N|{\rm II}^n| \le   c  \gamma^{-\frac12}\eta \big\|(\nabla(U_h^n(\P01(q_\theta^*))-u^\dag(t_n)))_{N_0}^N\big\|_{\ell^2(L^2(\Omega))} \\
&\le c\gamma^{-\frac12}\eta \big(\big\|(\nabla(U_h^n(\P01(q_\theta^*))-P_h u^\dag(t_n)))_{N_0}^N\big\|_{\ell^2(L^2(\Omega))} + \big\|(\nabla(u^\dag(t_n)-P_h u^\dag(t_n)))_{N_0}^N\big\|_{\ell^2(L^2(\Omega))} \big) \\
&\le   c  \gamma^{-\frac12}\eta \big(h^{-1}\| (U_h^n(\P01(q_\theta^*))-P_hu^\dag(t_n))_{N_0}^N\|_{\ell^2(L^2(\Omega))} + h\|(u^\dag(t_n))_{N_0}^N\|_{\ell^2(H^2(\Omega))} \big).
\end{align*}
Now by applying the $L^2(\Omega)$-stability of $P_h$ and Lemma \ref{lem:pri grad qN* in para}, we deduce
 \begin{align*}
\tau\sum_{n=N_{0}}^N|{\rm II}^n|
&\le  c  \gamma^{-\frac12}\eta \big( h^{-1}\|(U_h^n(\P01(q_\theta^*))-  u^\dag(t_n))_{N_{0}}^N\|_{\ell^2(L^2(\Omega))} + h \big) \le c(h  + h^{-1}\eta) \gamma^{-\frac12} \eta.
\end{align*}
Meanwhile, the box constraint $\P01(q_\theta^*)\in\mathcal{A}$ and the standard energy argument imply
\begin{align}\label{eqn:nablauhm-L2}
\|(\nabla(U_h^n(\P01(q_\theta^*))-u^\dag(t_n)))_{N_{0}}^N\|_{\ell^2(L^2(\Omega))}\le c.
\end{align}
Thus we obtain
\begin{equation*}
\tau\sum_{n=N_{0}}^N|{\rm II}^n|
 \le  c \gamma^{-\frac12} \eta \min(1, h  + h^{-1}\eta).
\end{equation*}
For the last term ${\rm III}^n$, we further split it into
\begin{equation*}
{\rm III}^n=\big(\bar{\partial}_\tau U_h^n(\P01(q_\theta^*))-\bar{\partial}_\tau u^\dagger(t_n), P_h\varphi^n\big)+\big( \bar{\partial}_\tau u^\dagger(t_n)-\partial_t u^\dagger(t_n), P_h\varphi^n\big)=: {\rm III}_{1}^n + {\rm III}_{2}^n,
\end{equation*}
and then bound ${\rm III}_{1}^n$ and ${\rm III}_{2}^n$ separately.
Repeating the argument in \cite[Theorem 4.5]{jin2021error} gives
\begin{equation*}  \bigg|\tau^3\sum_{j=N_{0}+1}^N\sum_{i=N_{0}+1}^j\sum_{n=i}^j{\rm III}_{2}^n\bigg|\leq c\tau .
\end{equation*}
To bound the term ${\rm III}_{1}^n$, by the summation by parts formula, we deduce
\begin{align*}
\tau\sum_{n=i}^j{\rm III}_{1}^n=&-\tau\sum_{n=i}^{j-1}\big(U_h^n(\P01(q_\theta^*))-u^\dagger(t_n),\bar{\partial}_\tau P_h\varphi^{n+1}\big) +\big(U_h^j(\P01(q_\theta^*))-u^\dagger(t_j),P_h\varphi^j\big)\\
&-\big(U_h^{i-1}(\P01(q_\theta^*))-u^\dagger(t_{i-1}),P_h\varphi^i\big).
\end{align*}
Since $\|P_h\varphi^n\|_{L^2(\Omega)}\leq \| \varphi^n\|_{L^2\II} \le c$, we get
\begin{equation*}
\bigg|\tau\sum_{j=N_{0}+1}^N\big(U_h^j(\P01(q_\theta^*))-u^\dagger(t_j),P_h\varphi^j\big) \bigg| +
\bigg| \tau\sum_{i=N_{0}+1}^N\big(U_h^{i-1}(\P01(q_\theta^*))-u^\dagger(t_{i-1}),P_h\varphi^i\big)\bigg|\leq c \eta.
\end{equation*}
Moreover, from the $L^2(\Omega)$ stability of $P_h$, Assumption \ref{assum: para} and the box constraint $\P01(q_\theta^*)\in\mathcal{A}$, we deduce
\begin{align*}
\|\bar{\partial}_\tau P_h\varphi^n\|_{L^2(\Omega)}&\leq\tau^{-1}\Big\|\int_{t_{n-1}}^{t_n}\frac{q^\dagger-\P01(q_\theta^*)}{q^\dagger}\partial_{t}u(t)\ \mathrm{d}t\Big\|_{L^2(\Omega)}
%&\leq c \tau^{-1}\int_{t_{n-1}}^{t_n}	\|\partial_{t}u(t)\|_{L^2(\Omega)} \ \mathrm{d} t
\leq c \| \partial_t u \|_{C([t_{n-1},t_n];L^2\II)}.
\end{align*}
Thus, we have
\begin{equation*}
\Big|\tau^3\sum_{j=N_{0}+1}^N\sum_{i=N_{0}+1}^j\sum_{n=i}^j\big(U_h^n(q_\theta^*)-u^\dag(t_n),\bar{\partial}_\tau P_h\varphi^{n+1}\big) \Big|\leq c\eta.
\end{equation*}
Finally, combining the preceding estimates with the identify
\begin{equation*}
\big((q^\dag-\P01(q_\theta^*))\nabla u^\dagger(t_n),\nabla\varphi^n\big)
=\frac12\int_{\Omega}\Big(\frac{q^\dag-\P01(q_\theta^*)}{q^\dag}\Big)^2\big(q^\dag |\nabla u^\dagger(t_n)|^2+\big(f(t_n)-\partial_tu^\dagger(t_n)\big)u^\dagger(t_n)\big)\ \mathrm{d}x
\end{equation*}
 completes the proof of the theorem.
\end{proof}

Similarly, we can impose a positivity condition: there exists some $\beta\geq0$ such that for any $t\in[T_0,T]$
\begin{equation}\label{Cond: P para}
q^\dag |\nabla u^\dagger(x,t)|^2+\big(f(x,t)-\partial_tu^\dagger(x,t)\big)u^\dagger(x,t)\geq  c\ {\rm dist}(x,\partial\Omega)^\beta, \quad \mbox{a.e. in}\ \Omega.
\end{equation}
This condition holds with $\beta=0,2$ under suitable assumptions on the problem data \cite[Propositions 4.7 and 4.8]{jin2021error}. Under condition \eqref{Cond: P para}, the argument of Theorem \ref{thm:error-ellip} gives the following $L^2(\Omega)$ error bound.
\begin{corollary}\label{cor:err-para}
Under the assumptions in Theorem \ref{thm: error in para} and condition \eqref{Cond: P para}, there holds
\begin{equation*}
\|q^\dagger-q_\theta^*\|_{L^2(\Omega)}\leq c\big( \min(h^{-1}\eta + h,1) + h \gamma^{-\frac12}\eta)
\gamma^{-\frac12}\eta\big)^{\frac{1}{2(1+\beta)}}.
\end{equation*}
\end{corollary}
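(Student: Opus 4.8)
The plan is to adapt the boundary-strip decomposition and optimization-in-$\rho$ argument from the final part of the proof of Theorem \ref{thm:error-ellip}. The only genuinely new feature is that Theorem \ref{thm: error in para} controls a \emph{time-averaged triple sum} rather than a single integral, so the first task is to extract from it a bound on the purely spatial, time-independent quantity $\int_{\Omega_\rho}\big(\tfrac{q^\dag-\P01(q_\theta^*)}{q^\dag}\big)^2\,\mathrm{d}x$ over the interior region. Once this is done, the remaining steps are a verbatim repetition of the elliptic case.

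First I would exploit that the weight $(q^\dag-\P01(q_\theta^*))/q^\dag$ does not depend on $t_n$, and that by condition \eqref{Cond: P para} each integrand $\big(\tfrac{q^\dag-\P01(q_\theta^*)}{q^\dag}\big)^2\big(q^\dag|\nabla u^\dagger(t_n)|^2+(f(t_n)-\partial_t u^\dagger(t_n))u^\dagger(t_n)\big)$ is nonnegative a.e. in $\Omega$, since every index appearing in the sum obeys $t_n=n\tau\ge (N_0+1)\tau>T_0$. Fixing $\rho>0$ and splitting $\Omega=\Omega_\rho\cup\Omega_\rho^c$ as in Theorem \ref{thm:error-ellip}, I would discard the nonnegative contribution on $\Omega_\rho^c$ and invoke \eqref{Cond: P para} with $\mathrm{dist}(x,\partial\Omega)\ge\rho$ on $\Omega_\rho$, so that each summand dominates $c\rho^\beta\int_{\Omega_\rho}\big(\tfrac{q^\dag-\P01(q_\theta^*)}{q^\dag}\big)^2\,\mathrm{d}x$. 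Pulling this time-independent factor out, the prefactor collapses to $\tau^3\,\#\{(j,i,n):N_0+1\le i\le n\le j\le N\}$, which equals $(T-T_0)^3/6+o(1)$ and is bounded below by a positive constant independent of $\tau$. Combined with the estimate of Theorem \ref{thm: error in para}, this gives $\int_{\Omega_\rho}\big(\tfrac{q^\dag-\P01(q_\theta^*)}{q^\dag}\big)^2\,\mathrm{d}x\le c\rho^{-\beta}\,(\star)$, where $(\star)$ denotes the right-hand side of that theorem; using $c_0\le q^\dag\le c_1$ this becomes $\|q^\dag-\P01(q_\theta^*)\|_{L^2(\Omega_\rho)}^2\le c\rho^{-\beta}(\star)$.

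On the boundary strip $\Omega_\rho^c$ I would use only the box constraint $\P01(q_\theta^*)\in\mathcal{A}$, which yields $\|q^\dag-\P01(q_\theta^*)\|_{L^2(\Omega_\rho^c)}^2\le c|\Omega_\rho^c|\le c\rho$, since the strip of width $\rho$ about $\partial\Omega$ has measure $O(\rho)$ for a convex polyhedral domain. Adding the two pieces gives $\|q^\dag-\P01(q_\theta^*)\|_{L^2(\Omega)}^2\le c(\rho^{-\beta}(\star)+\rho)$, and optimizing over $\rho>0$ through the balance $\rho^{1+\beta}=(\star)$ produces $\|q^\dag-\P01(q_\theta^*)\|_{L^2(\Omega)}^2\le c\,(\star)^{1/(1+\beta)}$; taking square roots yields the stated exponent $1/(2(1+\beta))$ with $(\star)=\big(\min(h^{-1}\eta+h,1)+h\gamma^{-1/2}\eta\big)\gamma^{-1/2}\eta$.

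The one delicate point, and where I would be most careful, is the time-summation bookkeeping: one must verify that every $t_n$ occurring in the triple sum lies in $(T_0,T]$ so that \eqref{Cond: P para} is applicable, and that the combinatorial prefactor $\tau^3\,\#\{(j,i,n)\}$ stays bounded away from $0$ uniformly as $\tau\to0^+$ — this is precisely where the hypothesis $T>T_0$ is used. No new analytic estimates beyond those already established in Theorem \ref{thm: error in para} are required.
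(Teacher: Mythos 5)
Your proposal is correct and coincides with the paper's intended argument: the paper proves this corollary simply by invoking the boundary-strip decomposition and optimization-in-$\rho$ argument of Theorem \ref{thm:error-ellip}, which is precisely what you carry out. The extra bookkeeping you supply --- nonnegativity of each summand via \eqref{Cond: P para} (valid since every $t_n$ in the triple sum satisfies $t_n\geq (N_0+1)\tau>T_0$), pulling out the time-independent weight, and the lower bound $\tau^3\,\#\{(j,i,n)\}\geq c\,(T-T_0)^3>0$ uniformly in $\tau$ --- correctly fills in the only detail the paper leaves implicit.
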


\subsection{Quadrature error analysis}
Now we study the influence of quadrature errors on the reconstruction. Like before, we formulate a practical hybrid NN-FEM discretization scheme of problem \eqref{equ:Tikh-para}-\eqref{equ:vari-para} (with numerical integration) by
\begin{equation}\label{equ:dis-min-para-q}
\min_{\theta\in \mathfrak{P}_{\infty,\epsilon}} \tilde{J}_{\gamma,h,\tau}(q_\theta)=\frac{1}{2}\|(\tilde{U}^n_h(\P01(q_\theta))-z_n^{\delta})_{N_0}^M\|^2_{\ell^2(L^2(\Omega))}+\frac\gamma2  Q_h(|\nabla q_\theta|^2) ,
\end{equation}
where $z_n^\delta:=\tau^{-1}\int_{t_{n-1}}^{t_n} z^\delta(t) \mathrm{d}t$, and $\tilde{U}_h^n\equiv \tilde{U}^n_h(\P01(q_\theta))\in X_h$ with $\tilde{U}_h^0(\P01(q_\theta))=P_hu_0$ satisfies
\begin{equation}\label{equ:dis-vari-para-q}	(\bar{\partial}_\tau \tilde{U}_h^n,\varphi_h)+(\P01(q_\theta)\nabla \tilde{U}^n_h,\nabla\varphi_h)_h=(f(t_n),\varphi_h), \quad\forall \varphi_h\in X_h,\ n=1,\ldots,N.
\end{equation}
Using the box constraint  $\P01(q_\theta)\in\mathcal{A}$ and the standard energy argument, we have
\begin{equation}\label{eqn:tildeU-para-quad}
\|(\nabla \tilde U_h^n(\P01(q_\theta)))_1^N\|_{\ell^2(L^2(\Omega))} \le c.
\end{equation}
The existence of a discrete forward map $\P01(q_\theta)\mapsto \{U_h^n\}_{n=1}^N$ follows from the ellipticity of the broken $L^2(\Omega)$ semi-inner product $(\cdot,\cdot)_h$ over the space $X_h$, and a standard argument yields that problem
\eqref{equ:dis-min-para-q}-\eqref{equ:dis-vari-para-q} has at least one minimizer $\tilde \theta^*$ with a continuous dependence on the data. Next we derive (weighted) $L^2(\Omega)$ error bounds of $P_\mathcal{A}(\tilde{q}_\theta^*)$, with the NN realization $\tilde q_\theta^*$ of the minimizer $\tilde{\theta}^*$.

\begin{assumption}\label{assum: para quad}
$q^\dag\in W^{2,\infty}(\Omega)\cap \mathcal{A}$,
$u_0\in H^2(\Omega)\cap H_0^1(\Omega)\cap W^{1,\infty}(\Omega)$
and $f\in L^{\infty}(0,T;L^{\infty}(\Omega))\cap C^1(0,T;L^2(\Omega))\cap W^{2,1}(0,T;L^2(\Omega))$.
\end{assumption}

The next lemma gives an analogue of Lemma \ref{lemma:err uq-uh in para} for the quadrature scheme.
\begin{lemma}\label{lemma:err uq-uh in para, quad}
Let Assumption \ref{assum: para quad} hold.  Then for small $\epsilon>0$, there exists  $\theta_\epsilon\in\mathfrak{P}_{\infty,\epsilon} $ such that
\begin{equation*}
\|\big(u(q^\dag)(t_n)-\tilde{U}^n_h(\P01( {q}_{\theta_\epsilon}))\big)_1^N\|^2_{\ell^2(L^2(\Omega))}\leq c(\tau^2+h^4+\epsilon^2).
\end{equation*}
\end{lemma}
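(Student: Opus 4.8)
The plan is to mirror the proof of Lemma~\ref{lemma:err uq-uh in para}, replacing the exact stiffness inner product by its quadrature counterpart $(\cdot,\cdot)_h$ throughout, and to combine two ingredients via the triangle inequality: a backward-Euler FEM error estimate for the \emph{exact} coefficient computed with numerical integration, and an energy estimate controlling the perturbation caused by replacing $q^\dag$ with the projected surrogate $\P01(q_{\theta_\epsilon})$. First I would invoke Lemma~\ref{lem:tanh-approx} under Assumption~\ref{assum: para quad} to obtain $\theta_\epsilon\in\mathfrak{P}_{\infty,\epsilon}$ whose realization $q_{\theta_\epsilon}$ satisfies the bound \eqref{eqn:qeps-W1p-2}; the stability estimate \eqref{eqn:P01-approx} of $\P01$ then yields \eqref{eqn:qeps-W1inf}, i.e.\ $\|q^\dag-\P01(q_{\theta_\epsilon})\|_{L^\infty\II}\le\epsilon$, exactly as in the elliptic quadrature Lemma~\ref{lem:uq-uh:quad}.

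Next I would set $\tilde\varrho_h^n:=\tilde U_h^n(\P01(q_{\theta_\epsilon}))-\tilde U_h^n(q^\dag)$, where $\tilde U_h^n(q^\dag)$ solves scheme \eqref{equ:dis-vari-para-q} with $q^\dag$ in place of $\P01(q_\theta)$ (well-defined since $q^\dag\ge c_0$ gives ellipticity of $(\cdot,\cdot)_h$). Subtracting the two discrete equations shows $\tilde\varrho_h^0=0$ and, for $n=1,\dots,N$,
\[
(\bar\partial_\tau\tilde\varrho_h^n,\varphi_h)+(\P01(q_{\theta_\epsilon})\nabla\tilde\varrho_h^n,\nabla\varphi_h)_h=\big((q^\dag-\P01(q_{\theta_\epsilon}))\nabla\tilde U_h^n(q^\dag),\nabla\varphi_h\big)_h,\quad\forall\varphi_h\in X_h.
\]
Taking $\varphi_h=2\tilde\varrho_h^n$, the mass term (evaluated with the \emph{exact} $L^2\II$ inner product in \eqref{equ:dis-vari-para-q}) telescopes to $\tau^{-1}(\|\tilde\varrho_h^n\|_{L^2\II}^2-\|\tilde\varrho_h^{n-1}\|_{L^2\II}^2)$, while the broken bilinear form is coercive: since $|\nabla\tilde\varrho_h^n|^2$ is piecewise constant the quadrature is exact on it, so $(\P01(q_{\theta_\epsilon})\nabla\tilde\varrho_h^n,\nabla\tilde\varrho_h^n)_h\ge c_0\|\nabla\tilde\varrho_h^n\|_{L^2\II}^2$. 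For the right-hand side I would use the broken-product stability $|(q\nabla v_h,\nabla w_h)_h|\le c\|q\|_{L^\infty\II}\|\nabla v_h\|_{L^2\II}\|\nabla w_h\|_{L^2\II}$ (already invoked in Lemma~\ref{lem:uq-uh:quad}) together with \eqref{eqn:qeps-W1inf}. Summing over $n$, invoking $\tilde\varrho_h^0=0$ and the energy stability \eqref{eqn:tildeU-para-quad} (with $q^\dag$), and applying Poincar\'e's inequality then gives $\|(\tilde\varrho_h^n)_1^N\|_{\ell^2(L^2\II)}\le c\epsilon$, exactly as in the non-quadrature case.

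The triangle inequality finally combines this with the FEM-with-quadrature estimate
\[
\|(u^\dag(t_n)-\tilde U_h^n(q^\dag))_1^N\|_{\ell^2(L^2\II)}^2\le c(\tau^2+h^4),
\]
which is the one genuinely new ingredient and the main obstacle. Under Assumption~\ref{assum: para quad} ($q^\dag\in W^{2,\infty}\II$) the spatial quadrature preserves the $O(h^2)$ accuracy of the elliptic solve, exactly as used in Lemma~\ref{lem:uq-uh:quad} via \cite{abdulle2012priori}; one then transfers the $O(\tau+h^2)$ parabolic rate of \cite{jin2021error} to the quadrature scheme by introducing the corresponding quadrature Ritz projection and repeating the standard error splitting into a projection error and a discrete-in-time evolution error. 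The only technical care needed is to verify that the quadrature perturbation of this Ritz projection respects the temporal consistency estimates; once the elliptic quadrature bound is available this follows by routine energy and duality arguments, completing the proof.
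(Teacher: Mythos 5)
Your proposal is correct, and its first two steps coincide with the paper's proof: the same choice of $\theta_\epsilon$ via Lemma~\ref{lem:tanh-approx} together with \eqref{eqn:P01-approx} giving \eqref{eqn:qeps-W1inf}, and the same discrete energy argument for the perturbation $\tilde U_h^n(q^\dag)-\tilde U_h^n(P_{\mathcal{A}}(q_{\theta_\epsilon}))$ (with the coercivity of the broken form justified exactly as in Lemma~\ref{lem:uq-uh:quad}), yielding the $c\epsilon$ bound. Where you genuinely diverge is the remaining ingredient $\|(u^\dag(t_n)-\tilde U_h^n(q^\dag))_1^N\|_{\ell^2(L^2(\Omega))}\le c(\tau+h^2)$: you propose to reprove the parabolic rate for the quadrature scheme from scratch, via a quadrature-perturbed Ritz projection, the standard splitting into projection and evolution errors, and a duality argument. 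The paper instead leaves the known non-quadrature rate $\|(u^\dag(t_n)-U_h^n(q^\dag))_1^N\|_{\ell^2(L^2(\Omega))}\le c(\tau+h^2)$ of \cite[Lemma 4.2]{jin2021error} untouched and only estimates the scheme-to-scheme difference $e_h^n=U_h^n(q^\dag)-\tilde U_h^n(q^\dag)$: subtracting the two discrete equations, testing with $e_h^n$, and invoking Lemma~\ref{lem:quad-error} with $p=2$ (this is precisely where $q^\dag\in W^{2,\infty}(\Omega)$ from Assumption~\ref{assum: para quad} enters) together with $\|(\nabla U_h^n(q^\dag))_1^N\|_{\ell^2(L^2(\Omega))}\le c$ \cite[Lemma 6.2]{wang2010error} gives $\|(\nabla e_h^n)_1^N\|_{\ell^2(L^2(\Omega))}\le ch^2$, whence the $\ell^2(L^2(\Omega))$ bound by Poincar\'e's inequality---no duality at all, since the second-order quadrature estimate already delivers $O(h^2)$ in the energy norm. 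Your route buys a self-contained error theory for the quadrature scheme; the paper's buys brevity and sidesteps a real technical burden that you gloss over with ``routine energy and duality arguments'': under Assumption~\ref{assum: para quad} the solution satisfies only the weighted regularity $t\|\partial_{tt}u^\dag(t)\|_{L^2(\Omega)}\le c$ of \eqref{eqn:reg-para2}, so redoing the full Ritz-projection splitting would require re-verifying the weighted-in-time consistency estimates of \cite{jin2021error} with quadrature perturbations throughout---feasible under the $W^{2,\infty}(\Omega)$ hypothesis, but considerably more than routine. Both approaches are sound; the paper's comparison-of-schemes argument is the more economical one.
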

\begin{proof}
It follows from Lemma \ref{lem:uq-uh} that there exists $\theta_\epsilon\in\mathfrak{P}_{\infty,\epsilon}$ such that
the estimate \eqref{eqn:qeps-W1inf} holds for the NN realization $q_{\theta_\epsilon}$.
Let $\varrho_h^n = \tilde U^n_h(q^\dagger)-\tilde U^n_h(\P01(q_{\theta_\epsilon}))$. Then it satisfies
$\varrho_h^n = 0$ and for all $n=1,2,\ldots,N$
$$(\partial_\tau \varrho_h^n, \varphi_h) + (\P01(q_{\theta_\epsilon})\nabla \varrho_h^n, \nabla \varphi_h)_h = ((\P01(q_{\theta_\epsilon}) - q^\dag)\nabla \tilde U_h^n(q^\dag), \nabla \varphi_h)_h,\quad \forall \varphi_h \in X_h.$$
Repeating the  argument of Lemma \ref{lemma:err uq-uh in para}
%and by the estimate
%$ (\P01(q_{\theta_\epsilon})\nabla \varrho_h^n, \nabla \varrho_h^n)_h
%\ge c_0(\nabla \varrho_h^n, \nabla \varrho_h^n)_h  \geq c_0\|  \nabla\varrho_h^n\|_{L^2\II}^2,$
gives
\begin{equation}\label{eqn:err-uq-para-quad}
\|(\tilde U^n_h(q^\dagger)-\tilde U^n_h(\P01(q_{\theta_\epsilon})))_1^N\|_{\ell^2(L^2(\Omega))}\leq  c\epsilon.
\end{equation}
Since $\|( u^\dag(t_n) - U^n_h(q^\dagger))_1^N \|_{\ell^2(L^2(\Omega))}\leq c(\tau + h^2)$  \cite[Lemma 4.2]{jin2021error},
it suffices to show
\begin{equation}\label{eqn:est-para-h}
\|( U^n_h(q^\dagger) - \tilde U^n_h(q^\dagger) )_1^N \|_{\ell^2(L^2(\Omega))}\leq c h^2.
\end{equation}
Let $e_h^n = U^n_h(q^\dagger) - \tilde U_h^n(q^\dag)$. Then $e_h^n$
satisfies $e_h^n =0$ and
\begin{equation*}
\begin{aligned}
(\partial_\tau e_h^n, \varphi_h) + (q^\dag \nabla e_h^n, \nabla \varphi_h)_h = (q^\dag \nabla U^n_h(q^\dagger), \nabla \varphi_h)_h -(q^\dag \nabla U^n_h(q^\dagger), \nabla \varphi_h),\quad\forall\varphi_h\in X_h, n=1,\ldots,N.
\end{aligned}
\end{equation*}
Now upon choosing $\varphi_h = e_h^n$ and applying Lemma \ref{lem:quad-error} (with $p=2$), we obtain
$$|(q^\dag \nabla e^n_h, \nabla e_h^n)_h -(q^\dag \nabla e^n_h, \nabla e_h^n)| \le  c h^2  \|q^\dag\|_{W^{2,\infty}(\Omega)}\|\nabla U_h^n(q^\dagger)\|_{L^2(\Omega)}\|\nabla e_h^n\|_{L^2(\Omega)}. $$
Consequently, we have
\begin{equation*}
\begin{aligned}
\tfrac12 \partial_\tau \|e_h^n\|_{L^2\II}^2 + c_0 \|\nabla e_h^n\|_{L^2\II}^2 \le  c h^2\|q^\dagger\|_{W^{2,\infty}(\Omega)}\|\nabla U_h^n(q^\dagger)\|_{L^2(\Omega)}\|\nabla e_h^n\|_{L^2(\Omega)}.
\end{aligned}
\end{equation*}
Then upon summing the identity over $n$ from $1$ to $N$, noting $e_h^0=0$, we arrive at
\begin{equation*}
\begin{aligned}
\|e_h^N\|_{L^2\II}^2 + c_0 \|(\nabla e_h^n)_1^N\|_{\ell^2(L^2\II)}^2 &\le  c h^2 \|(\nabla U_h^n(q^\dagger))_{1}^N\|_{\ell^2(L^2(\Omega))}\|(\nabla e_h^n)_1^N\|_{\ell^2(L^2(\Omega))}.
\end{aligned}
\end{equation*}
Then the estimate \eqref{eqn:est-para-h} follows from the bound $\|(\nabla U_h^n(q^\dagger))_1^N\|_{\ell^2(L^2(\Omega))}\leq c$ \cite[Lemma 6.2]{wang2010error}.
\end{proof}

The next lemma gives an \textit{a priori} bound on $u(q^\dag)(t_n)-\tilde{U}_h^n(\tilde{q}^*_\theta)$ and $\tilde{q}^*_\theta$, with the quadrature approximation. The proof is identical with that for Lemma \ref{lem:pri grad qN* in para}, and hence omitted.
\begin{lemma}\label{lem:pri grad qN* in para, quad}
Let Assumption \ref{assum: para quad} hold. Fix $\epsilon>0$, and let $\theta^*\in\mathfrak{P}_{\infty,\epsilon}$ be a minimizer to problem \eqref{equ:dis-min-para-q}-\eqref{equ:dis-vari-para-q} and $\tilde q^*_\theta$ its NN realization. Then the following estimate holds
\begin{equation*}
\|(u(q^\dag)(t_n)-\tilde{U}_h^n(\P01(\tilde q^*_\theta)))_{N_0}^N\|^2_{\ell^2(L^2(\Omega))}+\gamma Q_h(|\nabla \P01(\tilde q^*_\theta)|^2)\leq c(\tau^2+h^4+\epsilon^2+\delta^2+\gamma).
\end{equation*}
\end{lemma}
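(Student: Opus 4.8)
The plan is to follow verbatim the structure of the proof of Lemma~\ref{lem:pri grad qN* in para}, with two changes dictated by \eqref{equ:dis-min-para-q}--\eqref{equ:dis-vari-para-q}: the exact discrete state $U_h^n$ is replaced by the quadrature state $\tilde U_h^n$, and the continuous penalty $\|\nabla q_\theta\|_{L^2\II}^2$ by its quadrature counterpart $Q_h(|\nabla q_\theta|^2)$. Since the data-fitting term in \eqref{equ:dis-min-para-q} still uses the exact $\ell^2(L^2\II)$ norm in space, its treatment is identical to the non-quadrature case; only the penalty requires new input. First I would produce a competitor in $\mathfrak{P}_{\infty,\epsilon}$: by the argument of Lemma~\ref{lem:uq-uh} in the $p=\infty$ setting, Lemma~\ref{lem:tanh-approx} furnishes $\theta_\epsilon\in\mathfrak{P}_{\infty,\epsilon}$ whose realization $q_{\theta_\epsilon}$ obeys \eqref{eqn:qeps-W1p-2} and \eqref{eqn:qeps-W1inf}, and together with $q^\dag\in W^{2,\infty}(\Omega)$ this gives $\|q_{\theta_\epsilon}\|_{W^{1,\infty}\II}\le c$. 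Because $Q_h$ is a positive quadrature rule whose weights sum to $|\Omega|$, it is stable on $C(\overline\Omega)$, so $Q_h(|\nabla q_{\theta_\epsilon}|^2)\le |\Omega|\,\|\nabla q_{\theta_\epsilon}\|_{L^\infty\II}^2\le c$. Lemma~\ref{lemma:err uq-uh in para, quad} then controls the competitor's misfit, $\|(u^\dag(t_n)-\tilde U_h^n(\P01(q_{\theta_\epsilon})))_1^N\|^2_{\ell^2(L^2\II)}\le c(\tau^2+h^4+\epsilon^2)$.

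Next I would use the minimizing property $\tilde{J}_{\gamma,h,\tau}(\tilde q_\theta^*)\le \tilde{J}_{\gamma,h,\tau}(q_{\theta_\epsilon})$ together with the bound $\|(u^\dag(t_n)-z_n^\delta)_{N_0}^N\|^2_{\ell^2(L^2\II)}\le c(\tau^2+\delta^2)$ of \cite[Lemma~4.1]{jin2021error}. Inserting the competitor and applying the triangle inequality on $[T_0,T]$ yields
\begin{equation*}
\|(\tilde U_h^n(\P01(\tilde q_\theta^*))-z_n^\delta)_{N_0}^N\|^2_{\ell^2(L^2\II)}+\gamma Q_h(|\nabla \tilde q_\theta^*|^2)\le c(\tau^2+h^4+\epsilon^2+\delta^2+\gamma).
\end{equation*}
A second triangle inequality (subtracting and adding $z_n^\delta$ and then $u^\dag(t_n)$) transfers the data term to $\|(u^\dag(t_n)-\tilde U_h^n(\P01(\tilde q_\theta^*)))_{N_0}^N\|^2_{\ell^2(L^2\II)}$ at the cost of the same right-hand side. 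This already reproduces both principal quantities of the claim, except that the penalty is still written through $\nabla\tilde q_\theta^*$ rather than $\nabla\P01(\tilde q_\theta^*)$.

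The remaining, genuinely quadrature-specific, step is to pass from $Q_h(|\nabla \tilde q_\theta^*|^2)$ to $Q_h(|\nabla \P01(\tilde q_\theta^*)|^2)$. Here the continuous stability \eqref{eqn:P01-stab} cannot be used directly; instead I would exploit that the clamp $\P01$ is piecewise linear with slope in $\{0,1\}$, so that $|\nabla \P01(\tilde q_\theta^*)(x)|\le|\nabla \tilde q_\theta^*(x)|$ at every point where $\tilde q_\theta^*$ is differentiable and $\tilde q_\theta^*(x)\notin\{c_0,c_1\}$. Since $\tilde q_\theta^*$ is an analytic $\tanh$ network, the level sets $\{\tilde q_\theta^*=c_0\}$ and $\{\tilde q_\theta^*=c_1\}$ have measure zero, so the pointwise bound holds at the finitely many quadrature nodes (or, if a node meets a level set, for any one-sided value of the clamp derivative, which still lies in $\{0,1\}$). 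Positivity of the weights then gives $Q_h(|\nabla \P01(\tilde q_\theta^*)|^2)\le Q_h(|\nabla \tilde q_\theta^*|^2)$, and combining this with the previous display completes the proof. I expect this last transfer to be the main subtlety: unlike the continuous case, where \eqref{eqn:P01-stab} is exact, and unlike the elliptic Lemma~\ref{lem:pri grad qN* quad}, where the penalty is stated directly in $\nabla q_\theta^*$, the nondifferentiability of $\P01$ on its clamp boundaries forces one to argue the discrete stability through the pointwise slope bound rather than by a verbatim appeal to \eqref{eqn:P01-stab}.
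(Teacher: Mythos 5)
Your proposal is correct and follows the same skeleton as the paper's (omitted) argument: the paper simply declares the proof ``identical with that for Lemma \ref{lem:pri grad qN* in para}'', with the only quadrature-specific ingredient being the bound $Q_h(|\nabla q_{\theta_\epsilon}|^2)\le c$ for the competitor via $\|q_{\theta_\epsilon}\|_{W^{1,\infty}(\Omega)}\le c$ and the stability of $Q_h$ on $C(\overline\Omega)$, exactly as in the elliptic quadrature Lemma \ref{lem:pri grad qN* quad}; your use of Lemma \ref{lem:tanh-approx} and \eqref{eqn:qeps-W1p-2} for the competitor, Lemma \ref{lemma:err uq-uh in para, quad} for its misfit, the data bound $\|(u^\dag(t_n)-z_n^\delta)_{N_0}^N\|^2_{\ell^2(L^2(\Omega))}\le c(\tau^2+\delta^2)$, the minimizing property, and two triangle inequalities reproduces this verbatim. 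Where you genuinely go beyond the paper is the final transfer from $Q_h(|\nabla \tilde q_\theta^*|^2)$ to $Q_h(|\nabla \P01(\tilde q_\theta^*)|^2)$: the ``identical'' proof would close by citing \eqref{eqn:P01-stab}, but that is an $L^p$-norm statement and does not literally control the discrete functional $Q_h$, which samples $|\nabla \P01(\tilde q_\theta^*)|^2$ at finitely many nodes where the clamp may be nondifferentiable. Your pointwise repair --- $\nabla \P01(\tilde q_\theta^*)=\chi_{\{c_0<\tilde q_\theta^*<c_1\}}\nabla \tilde q_\theta^*$ wherever defined, a one-sided convention with slope in $\{0,1\}$ at nodes lying on the level sets, and positivity of the quadrature weights --- is the right argument and is in fact needed for the statement as written. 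Two small remarks: the ``level sets have measure zero by analyticity'' aside must exclude the constant-network case (harmless, since then both gradients vanish), and it is dispensable anyway because $Q_h$ is a finite sum over nodes, where your one-sided convention already suffices; and your intermediate unprojected bound $\gamma Q_h(|\nabla \tilde q_\theta^*|^2)\le c\eta^2$ is precisely the form invoked later in the proof of Theorem \ref{thm: error in para, quad} (via the argument for \eqref{eqn:nablaphi-quad}), so your proof conveniently delivers both versions of the penalty estimate.
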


Now we can present the main result of this section.
\begin{theorem}\label{thm: error in para, quad}
Let Assumption \ref{assum: para quad} hold. Fix $\epsilon>0$, and let $\theta^*\in\mathfrak{P}_{\infty,\epsilon}$ be a minimizer to problem \eqref{equ:dis-min-para-q}-\eqref{equ:dis-vari-para-q} and $\tilde q^*_\theta$ its NN realization. Let $\eta^2:=\tau^2+h^4+\epsilon^2+\delta^2+\gamma$ and $ \zeta=1+\gamma^{-1}\eta^2+2^{-2n}h^2 R^{4L}\mathcal{W}^{4L-4}.$
Then the following estimate holds
\begin{align*}
&\tau^3\sum_{j=N_{0}+1}^N\sum_{i=N_{0}+1}^j\sum_{n=i}^j\int_{\Omega}\bigg(\frac{q^\dag-\P01(\tilde{q}_\theta^*)}{q^\dag}\bigg)^2\big(q^\dag |\nabla u^\dag(t_n)|^2+\big(f(t_n)-\partial_tu^\dag(t_n)\big)u^\dag(t_n)\big)\ \mathrm{d}x\\
&\leq c\big(h\zeta^\frac12+ (\min(h^{-1}\eta+h,1) +  2^{-n}hd^\frac{1}{2} R^{L}{W}^{L} \big)\zeta^\frac12.
\end{align*}
\end{theorem}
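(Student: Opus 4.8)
The plan is to combine the temporal triple-summation argument of Theorem~\ref{thm: error in para} with the quadrature bookkeeping of Theorem~\ref{thm:error-ellip-q}. The decisive structural point is that in the discrete equation \eqref{equ:dis-vari-para-q} only the stiffness term carries the broken inner product $(\cdot,\cdot)_h$, whereas the mass term $(\bar\partial_\tau\tilde U_h^n,\varphi_h)$ uses the exact $L^2(\Omega)$ product; hence the entire temporal analysis is insulated from the quadrature error, which enters through a single additional spatial defect. Concretely, for any $\varphi\in H_0^1(\Omega)$ I would start from the weak formulations of $u^\dag$ in \eqref{equ:vari-para} and of $\tilde U_h^n(\P01(\tilde q_\theta^*))$ in \eqref{equ:dis-vari-para-q}, split $\varphi=(\varphi-P_h\varphi)+P_h\varphi$, integrate the first piece by parts, and insert $(\cdot,\cdot)_h$ when substituting the discrete equation. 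This yields the four-term identity $\big((q^\dag-\P01(\tilde q_\theta^*))\nabla u^\dag(t_n),\nabla\varphi\big)={\rm I}^n+{\rm II}^n+{\rm III}^n+{\rm IV}^n$, where ${\rm I}^n,{\rm II}^n,{\rm III}^n$ are exactly the terms of Theorem~\ref{thm: error in para} (the consistency term, the FEM gradient-error term, and the temporal term $(\bar\partial_\tau\tilde U_h^n-\partial_t u^\dag(t_n),P_h\varphi)$), and ${\rm IV}^n=-[(\P01(\tilde q_\theta^*)\nabla\tilde U_h^n,\nabla P_h\varphi)_h-(\P01(\tilde q_\theta^*)\nabla\tilde U_h^n,\nabla P_h\varphi)]$ is the stiffness-quadrature defect, matching term III of Theorem~\ref{thm:error-ellip-q}.

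I then fix $\varphi\equiv\varphi^n=\frac{q^\dag-\P01(\tilde q_\theta^*)}{q^\dag}u^\dag(t_n)$. The gradient control is where the penalty quadrature enters: combining the stability \eqref{eqn:P01-stab}, Lemma~\ref{lem:pri grad qN* in para, quad} and Lemma~\ref{lem:quad-err-H1} gives $\|\nabla\P01(\tilde q_\theta^*)\|_{L^2(\Omega)}^2\le c(\gamma^{-1}\eta^2+2^{-2n}h^2R^{4L}W^{4L-4})$, whence $\|\nabla\varphi^n\|_{L^2(\Omega)}\le c\zeta^{\frac12}$ uniformly in the time index (using \eqref{eqn:reg-para2} and $u^\dag\in L^\infty(0,T;W^{1,\infty}(\Omega))$ from \eqref{eqn:reg-para1}). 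With this bound, ${\rm I}^n,{\rm II}^n,{\rm III}^n$ are estimated as in Theorem~\ref{thm: error in para}, replacing the factor $\gamma^{-\frac12}\eta$ by $\zeta^{\frac12}$ wherever the gradient bound was used: one gets $|{\rm I}^n|\le ch\zeta$ pointwise, the summed bound $\tau\sum_n|{\rm II}^n|\le c\min(h^{-1}\eta+h,1)\zeta^{\frac12}$ via the inverse estimate and \eqref{inequ: L2 proj approx} together with Lemma~\ref{lem:pri grad qN* in para, quad}, and -- after the summation-by-parts splitting of ${\rm III}^n$ into ${\rm III}_1^n,{\rm III}_2^n$ -- a contribution of order $c\eta$ that is dominated since $\zeta^{\frac12}\ge1$.

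The genuinely new estimate is for ${\rm IV}^n$. I would bound it by Lemma~\ref{lem:quad-error} with $p=1$, using the $H^1(\Omega)$-stability of $P_h$ and $\|\nabla P_h\varphi^n\|_{L^2(\Omega)}\le c\zeta^{\frac12}$, to obtain $|{\rm IV}^n|\le c\,2^{-n}h\,\|\P01(\tilde q_\theta^*)\|_{W^{1,\infty}(\Omega)}\,\|\nabla\tilde U_h^n\|_{L^2(\Omega)}\,\zeta^{\frac12}$; the $W^{1,\infty}$ factor is controlled by $c\,d^{\frac12}R^LW^L$ through the stability of $\P01$ and the derivative bound \eqref{eqn:dvthe}. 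Crucially, $\|\nabla\tilde U_h^n\|_{L^2(\Omega)}$ is only available in the $\ell^2$-in-time sense \eqref{eqn:tildeU-para-quad}, so I would \emph{not} seek a pointwise-in-time bound; instead, inside the triple sum $\tau^3\sum_{j}\sum_{i}\sum_{n=i}^j$ I apply Cauchy--Schwarz in the time index, $\tau\sum_{n=i}^j\|\nabla\tilde U_h^n\|_{L^2(\Omega)}\le T^{\frac12}\|(\nabla\tilde U_h^n)_1^N\|_{\ell^2(L^2(\Omega))}\le c$, after which the remaining $\tau^2\sum_{i,j}1=\mathcal{O}(1)$ yields $\tau^3\sum_j\sum_i\sum_{n=i}^j|{\rm IV}^n|\le c\,2^{-n}hd^{\frac12}R^LW^L\zeta^{\frac12}$. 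Summing the four-term identity against $\tau^3\sum_{j}\sum_{i}\sum_{n=i}^j$ and using the pointwise identity of Theorem~\ref{thm: error in para} to rewrite the left-hand side as the positivity-weighted integral assembles the claimed bound.

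The main obstacle is the temporal term ${\rm III}^n$, inherited from Theorem~\ref{thm: error in para}: its consistency piece ${\rm III}_2^n=(\bar\partial_\tau u^\dag(t_n)-\partial_t u^\dag(t_n),P_h\varphi^n)$ must be averaged by the triple sum to upgrade the first-order-in-$\tau$ truncation error into the sharp $\mathcal{O}(\tau)$ bound (exploiting \eqref{eqn:reg-para2}), while the discrete piece ${\rm III}_1^n$ requires a summation-by-parts in time together with $\|\bar\partial_\tau P_h\varphi^{n+1}\|_{L^2(\Omega)}\le c\|\partial_t u^\dag\|_{C([t_{n},t_{n+1}];L^2(\Omega))}$. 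Care is also needed with the overloaded symbol $n$, which denotes both the quadrature refinement level (in $2^{-n}$, $\zeta$, and Lemmas~\ref{lem:quad-error}--\ref{lem:quad-err-H1}) and the time index (in $U_h^n$, $t_n$, and the triple sum); I would keep them notationally distinct in the write-up. By contrast, the quadrature contribution itself is routine once one recognizes that the mass term is quadrature-free, so that ${\rm IV}^n$ is a pure stiffness defect handled exactly as in the elliptic case, the only new subtlety being the Cauchy--Schwarz reduction of the pointwise $\|\nabla\tilde U_h^n\|_{L^2(\Omega)}$ to its available $\ell^2$-in-time energy bound.
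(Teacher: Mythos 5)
Your proposal reproduces the paper's proof essentially step for step: the same four-term identity with the stiffness-quadrature defect ${\rm IV}^n$ (correctly noting that the mass term is quadrature-free), the same test function $\varphi^n=\frac{q^\dag-\P01(\tilde q_\theta^*)}{q^\dag}u^\dag(t_n)$ with the gradient bound $\|\nabla\varphi^n\|_{L^2(\Omega)}\le c\zeta^{\frac12}$ obtained from Lemmas \ref{lem:pri grad qN* in para, quad} and \ref{lem:quad-err-H1}, the same treatment of ${\rm I}^n$, ${\rm II}^n$, ${\rm III}^n$ as in Theorem \ref{thm: error in para} with $\gamma^{-\frac12}\eta$ replaced by $\zeta^{\frac12}$, and the same estimate of ${\rm IV}^n$ via Lemma \ref{lem:quad-error} (with $p=1$), the derivative bound \eqref{eqn:dvthe} giving the $d^{\frac12}R^LW^L$ factor, and a Cauchy--Schwarz-in-time reduction to the energy bound \eqref{eqn:tildeU-para-quad}, which is exactly how the paper sums $|{\rm IV}^n|$ over the time grid. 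The proposal is correct, and your side remark about the overloaded symbol $n$ (quadrature level versus time index) is a fair editorial observation about the paper's own notation that does not affect the argument.
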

\begin{proof}
For any $\varphi\in H_0^1(\Omega)$, the weak formulations of $u^\dag$ and $\tilde{U}_h^m(\tilde q_\theta^*)$, cf. \eqref{equ:vari-para} and \eqref{equ:dis-vari-para-q}, imply
\begin{align*}
 &\quad\big((q^\dag- \P01(\tilde q_\theta^*))\nabla u^\dagger(t_n),\nabla\varphi\big) \\
 &= \big ((q^\dag-\P01(\tilde q_\theta^*))\nabla u^\dagger(t_n),\nabla(\varphi-P_h\varphi)\big) + \big( (q^\dag-\P01(\tilde q_\theta^*))\nabla u^\dagger(t_n),\nabla P_h\varphi\big) \\
		& = -\big(\nabla\cdot\big((q^\dag-\P01(\tilde q_\theta^*))\nabla u^\dag(t_n)\big),\varphi-P_h\varphi\big)+\big(\P01(\tilde q_\theta^*)\nabla(\tilde{U}_h^n(\P01(\tilde q_\theta^*)) -u^\dag(t_n)),\nabla P_h\varphi\big)\\
  &\quad+ \big(\bar{\partial}_\tau \tilde{U}_h^n(\P01(\tilde{q}_\theta^*))-\partial_t u^\dag(t_n), P_h\varphi\big)\\
  &\quad+ \Big(\big(\P01(\tilde q_\theta^*)\nabla \tilde{U}_h^n(\P01(\tilde q_\theta^*)),\nabla P_h\varphi\big)_h-\big(\P01(\tilde q_\theta^*) \tilde{U}_h^n(\P01(\tilde q_\theta^*)),\nabla P_h\varphi\big)\Big)
 =: {\rm I}^n + {\rm II}^n + {\rm III}^n + {\rm IV}^n.
\end{align*}
Set $\varphi\equiv\varphi^n=\frac{q^\dag-\P01(\tilde q_\theta^*)}{q^\dag}u^\dag(t_n)$ in the identity.
Lemma \ref{lem:pri grad qN* in para, quad} and the argument for \eqref{eqn:nablaphi-quad} imply
\begin{equation*}%\label{eqn:nablaphi-quad-parab}
\|\nabla \varphi\|_{L^2(\Omega)}\leq c(1+\|\nabla  \P01(\tilde q_\theta^*)\|_{L^2(\Omega)})
%\leq c(1+\gamma^{-1}\eta^2 \, + 2^{-2n}h^2R^{4L}\mathcal{W}^{4L-4})^\frac12
\leq c\zeta^\frac12.
\end{equation*}
Then repeating the argument for Theorem \ref{thm: error in para} yields
\begin{align*}
&|{\rm I}^n| \leq ch\zeta, \quad
\tau\sum_{m=M_{0}}^M|{\rm II}^n|\leq  c \min(h^{-1}\eta+h,1) \zeta^\frac12\quad\mbox{and}\quad
\tau^3\sum_{j=N_{0}+1}^N\sum_{i=N_{0}+1}^j\sum_{n=i}^j{\rm III}^n\leq c\eta.
\end{align*}
Then it follows from Lemma \ref{lem:quad-error} (with $p=1$), the stability of $P_\mathcal{A}$ and the bounds \eqref{eqn:dvthe} and \eqref{eqn:tildeU-para-quad} that
\begin{align*}
\tau\sum_{j=N_0}^N|{\rm IV}^n|&\leq c2^{-n}h\|\P01(\tilde q_\theta^*)\|_{W^{1,\infty}(\Omega)}
\Big(\max_{N_0\le n\le N}\|\nabla P_h \varphi^n\|_{L^2(\Omega)}\Big)
\big\|(\nabla \tilde U_h^n(\P01(\tilde q_\theta^*)))_{N_0}^N\big\|_{\ell^2(L^2(\Omega))}\\
&\leq  c 2^{-n}h  \zeta^{\frac12}
(\|  \P01(\tilde q_\theta^*) \|_{L^\infty\II} + \| \nabla \P01(\tilde q_\theta^*) \|_{L^\infty\II}) \big\|(\nabla \tilde U_h^n(\P01(\tilde q_\theta^*)))_{N_0}^N \|_{\ell^2(L^2(\Omega))}\\
&\leq  c 2^{-n}h \zeta^{\frac12}
(1 + \| \nabla\tilde q_\theta^* \|_{L^\infty\II}) \big\|(\nabla \tilde U_h^n(\P01( \tilde q_\theta^*)))_{N_0}^N\|_{\ell^2(L^2(\Omega))}\leq c 2^{-n}h d^\frac{1}{2}R^LW^{L} \zeta^\frac12.
\end{align*}
Combining the preceding estimates directly shows the the desired assertion.
\end{proof}

\section{Numerical results and discussions}
\label{sec: numerics}
%Now we describe the numerical implementation of the proposed hybrid NN-FEM method and present several numerical experiments to illustrate its features.

\subsection{Numerical implementation}
First we describe the implementation of the hybrid NN-FEM approach, i.e., problems \eqref{equ:dis-min-ellip}-\eqref{equ:dis-vari-ellip} and  \eqref{equ:dis-min-para}-\eqref{equ:dis-vari-para}. We train the NNs by minimizing the losses \eqref{equ:dis-min-ellip} and \eqref{equ:dis-min-para} for the elliptic and parabolic cases, respectively.
Traditionally, the NNs are trained using gradient type methods and the gradient is computed using back-propagation \cite{LeCun:1988}, which can be done in many software framework, e.g., PyTorch and Tensorflow. In the hybrid method, we employ the adjoint technique \cite{Cea:1986}. By the chain rule, the gradient of the loss $J_{\gamma}$ to the NN parameter $\theta$ is given by $\frac{\d J_{\gamma}}{\d\theta}=\frac{\d J_{\gamma}}{\d q}\frac{\d q}{\d\theta}$. We compute $J'(q)=\frac{\d J_{\gamma}}{\d q} $ using the standard adjoint technique, and $\frac{\d q}{\d\theta} $ using back-propagation.
\begin{lemma}\label{lem:gradient_J}
Let $J_{\gamma}$ be the functional in \eqref{equ:Tikhonov problem in ellip} or \eqref{equ:Tikh-para}. The gradient of $J_{\gamma}$ at $q\in H^1(\Omega)$ is given by
\begin{align*}
 J_{\gamma}'(q) & =\left\{\begin{aligned}
    \nabla u\cdot\nabla v-\gamma\Delta q, & \quad \mbox{elliptic},\\
    \int_{T_0}^T \nabla u\cdot\nabla w\ \mathrm{d}t-\gamma\Delta q,& \quad \mbox{parabolic},
 \end{aligned}\right.
\end{align*}
where $v$ and $w$ respectively solve
\begin{equation*}
\left\{
        \begin{aligned}
        -\nabla\cdot(q\nabla v) &= z^{\delta}-u, &&\mbox{in } \Omega, \\
		v&=0, &&\mbox{on } \partial\Omega,
        \end{aligned}
        \right.\quad \mbox{and}\quad  \left\{
        \begin{aligned}
        -\partial_t w-\nabla\cdot(q\nabla w)&=z^{\delta}-w,  &&\mbox{in } \Omega\times(T_0,T),\\
        w&=0,  &&\mbox{on } \partial\Omega\times(T_0,T),\\
        w(T)&=0,  &&\mbox{in } \Omega.
        \end{aligned}
        \right.
\end{equation*}
\end{lemma}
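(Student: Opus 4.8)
\emph{Strategy.} The plan is to use the adjoint-state (Lagrangian) method. First I would establish that the parameter-to-state map $q\mapsto u(q)$ is differentiable and derive the \emph{sensitivity equation} for the directional derivative $\dot u:=u'(q)[p]$ in a direction $p\in H^1(\Omega)$; then introduce an adjoint variable to eliminate $\dot u$ from the directional derivative of the data-fitting term; and finally read off the $L^2(\Omega)$ Riesz representative, which is the gradient $J_\gamma'(q)$ reported in the statement (so the identity to aim for is $\tfrac{\mathrm d}{\mathrm ds}\big|_{s=0}J_\gamma(q+sp)=(J_\gamma'(q),p)$).

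\emph{Elliptic case.} Differentiating the weak state equation \eqref{equ:vari problem in ellip} in the direction $p$ gives the sensitivity $\dot u\in H_0^1(\Omega)$ characterized by $(q\nabla\dot u,\nabla\varphi)=-(p\nabla u,\nabla\varphi)$ for all $\varphi\in H_0^1(\Omega)$; differentiability of $q\mapsto u(q)$ and this characterization follow from Lax--Milgram together with the uniform ellipticity on $\mathcal{A}$. The directional derivative of the fidelity term is $(u-z^\delta,\dot u)$. Introducing the adjoint $v$ from the statement, i.e. $(q\nabla v,\nabla\varphi)=(z^\delta-u,\varphi)$ for all $\varphi$, and testing the adjoint equation with $\varphi=\dot u$ and the sensitivity equation with $\varphi=v$, the symmetry of the bilinear form $(q\nabla\cdot,\nabla\cdot)$ yields $(u-z^\delta,\dot u)=(p\,\nabla u,\nabla v)=\int_\Omega p\,\nabla u\cdot\nabla v\,\mathrm{d}x$. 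For the penalty, $\tfrac{\mathrm d}{\mathrm ds}\big|_{s=0}\tfrac\gamma2\|\nabla(q+sp)\|_{L^2(\Omega)}^2=\gamma(\nabla q,\nabla p)=-\gamma(\Delta q,p)$ after integration by parts, the boundary integral vanishing under the natural homogeneous Neumann condition. Adding the two contributions and reading off the $L^2(\Omega)$ representative gives $J_\gamma'(q)=\nabla u\cdot\nabla v-\gamma\Delta q$.

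\emph{Parabolic case.} The argument is structurally identical, the new feature being that the adjoint runs backward in time and the observation acts only on $(T_0,T)$. Differentiating \eqref{equ:vari-para} produces the sensitivity $\dot u$ on $(0,T)$ with $\dot u(0)=0$ solving $(\partial_t\dot u,\varphi)+(q\nabla\dot u,\nabla\varphi)=-(p\nabla u,\nabla\varphi)$, and the directional derivative of the fidelity term is $\int_{T_0}^T(u-z^\delta,\dot u)\,\mathrm{d}t$. With the adjoint $w$ (terminal condition $w(T)=0$, driven by the residual $z^\delta-u$ on the observation window), I would test the adjoint equation against $\dot u$ and the sensitivity equation against $w$, integrate over $t$, and integrate by parts in time on the term involving $(\partial_t\dot u,w)$. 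The endpoint contributions vanish precisely because $\dot u(0)=0$ and $w(T)=0$, leaving $\int_{T_0}^T(u-z^\delta,\dot u)\,\mathrm{d}t=\int_{T_0}^T\!\!\int_\Omega p\,\nabla u\cdot\nabla w\,\mathrm{d}x\,\mathrm{d}t$; combining with the same penalty computation as above yields the stated formula.

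\emph{Main obstacle.} The delicate point is the time bookkeeping in the parabolic case: one must pair the forward sensitivity (carrying its initial condition at $t=0$) against the backward adjoint (carrying its terminal condition at $t=T$) so that no spurious endpoint term survives, and carefully reconcile the full interval $(0,T)$ on which $\dot u$ is defined with the observation window $(T_0,T)$ that enters the fidelity term, since it is the support of the adjoint source and the range of time integration that must be tracked consistently. This step also requires verifying that the adjoint problem---a well-posed forward parabolic problem after time reversal---and the sensitivity problem carry enough regularity to justify the spatial and temporal integrations by parts. By contrast, the elliptic case is routine once Fr\'echet differentiability of the forward map is in hand.
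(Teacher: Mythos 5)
Your proposal is correct and follows essentially the same route as the paper: the adjoint technique with the sensitivity equation $(q\nabla \dot u,\nabla\varphi)=-(p\nabla u,\nabla\varphi)$, the adjoint weak formulation, and the test-function swap ($\varphi=v$ in the sensitivity equation, $\varphi=\dot u$ in the adjoint equation) exploiting symmetry of the bilinear form, which is precisely the paper's proof of the elliptic case, with the parabolic case dismissed there as ``similar'' and worked out in your sketch exactly as intended. Your added care with the penalty's integration by parts and the parabolic endpoint bookkeeping (using $\dot u(0)=0$ and $w(T)=0$) only makes explicit what the paper leaves implicit, so no substantive difference arises.
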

\begin{proof}
We only show the elliptic case, since the proof for the parabolic case is similar. The derivative $J'_\gamma$ of $J_\gamma$ at $q$ along the direction $p$ is given by
$J_\gamma'(p)[q] = (u(q)-z^{\delta},u'(q)[p]) + \gamma (\nabla q,\nabla p)$,
where $u'(q)[p] $ is the derivative of $u(q)$ at $q$ along the direction $p$. Note that $u'(q)[p]$ satisfies
\begin{align}\label{eqn:weak u_prime_q_p}
  (q\nabla u'(q)[p], \nabla\varphi)=-(p\nabla u, \nabla\varphi),\quad \forall \varphi \in H_0^1(\Omega).
\end{align}
Meanwhile, the weak formulation of $v$ is given by
\begin{align}\label{eqn:weak_adjoint}
    (q\nabla v, \nabla\varphi) = (z^{\delta}-u,\varphi),\quad \forall \varphi\in H_0^1(\Omega).
\end{align}
Now choosing $\varphi =v$ in \eqref{eqn:weak u_prime_q_p} and $\varphi=u'(q)[p]$ in \eqref{eqn:weak_adjoint} and subtracting the resulting identities yield
$(u(q)-z^{\delta}), u'(q)[p]) =(p\nabla u,\nabla v)$. This shows the desired assertion directly.
\end{proof}

Note that the gradient $J_{\gamma}'(q)$ belongs to $(H^{1}(\Omega))^*$, which is a distribution and unsuitable for updating the coefficient $q$ directly. To remedy this issue, we apply the Riesz map to pull it back to the space $H^1(\Omega)$. This gives a sufficiently regular gradient for updating $q$. By  Riesz representation theorem, there exists $G\in H^1(\Omega)$ such that
\begin{equation*}
\langle J_{\gamma}'(q),\varphi\rangle_{(H^{1}(\Omega))^*,H^1(\Omega)}=(G,\varphi)_{H^1(\Omega)},\quad \forall \varphi\in H^1(\Omega),
\end{equation*}
where $\langle \cdot,\cdot \rangle_{(H^{1}(\Omega))^*,H^1(\Omega)}$  denotes the duality pairing between $(H^{1}(\Omega))^*$ and $H^1(\Omega)$, and $ (\cdot,\cdot)_{H^1(\Omega)}$ the $H^1(\Omega)$ inner product. Thus, $G\in H^1(\Omega)$ is the weak solution of the following elliptic problem:
\begin{equation*}
    \left\{
    \begin{aligned}
        -\Delta G+ G&= J_{\gamma}'(q),\quad \mbox{in }\Omega,\\
        \partial_{\nu} G&=0, \quad \mbox{on }\partial\Omega.
    \end{aligned}
    \right.
\end{equation*}

\subsection{Numerical experiments and discussions}

Now we present numerical reconstructions $P_\mathcal{A}(q_\theta^*)$ and $q_h$ using the hybrid NN-FEM approach and the fully FEM. Their accuracy to the exact diffusivity $q^\dag$ is measured by the relative error:
\begin{equation*}
    e(\P01(q_\theta^*)):=\|q^\dag-\P01(q_\theta^*)\|_{L^2(\Omega)}/\|q^{\dag}\|_{L^2(\Omega)}\quad \mbox{and}\quad e(q_h^*):=\|q^\dag-q_h^*\|_{L^2(\Omega)}/\|q^{\dag}\|_{L^2(\Omega)}.
\end{equation*}
The exact data $u^\dag$ is generated on a finer mesh, and in the elliptic case, the noisy data $z^\delta$ is generated by
$z^\delta(x) = u^\dag(x) + \epsilon\|u^\dag\|_{L^\infty(\Omega)} \xi(x)$ for $x\in \Omega$,
where $\xi(x)$ follows the standard Gaussian distribution, and $\epsilon>0$ is the (relative) noise level. The parabolic case is similar. Unless otherwise stated, the NN for approximating $q$ is taken to be $d$-32-32-1 (i.e., with two hidden layers, each having 32 neurons). The mesh size $h$ is 1/40 and 1/32 and the time step size $\tau$ is $1/1000$ and $1/200$, for one- and two-dimensional problems, respectively. These FEM discretization parameters are applied to both hybrid method and pure FEM. The resulting loss is minimized using ADAM \cite{KingmaBa:2015}. All the experiments were carried out on a personal desktop  (with Windows 10, with RAM 64.0GB, Intel(R) Core(TM) i9-10900 CPU,
2.80 GHz). The hybrid NN-FEM approach was implemented with Python 3.8.8 on the software framework TensorFlow using the SciKit-fem \cite{skfem2020} package to solve the PDEs, and the pure FEM approach was implemented on MATLAB 2022a. Unless otherwise stated, the level of quadrature is fixed at $n=0$ (i.e., no further sub-division).

\begin{table}[htp!]
  \centering
    \caption{The relative errors for the examples at different noise levels.\label{tab:exam}}
    \begin{tabular}{c|ccccc|ccccc|}
    \toprule
    \multicolumn{1}{c}{}&
    \multicolumn{5}{c}{(a) Example \ref{ex:ell}(i)}&\multicolumn{5}{c}{(b) Example \ref{ex:ell}(ii)}\\
    \cmidrule(lr){2-6} \cmidrule(lr){7-11}
    $\epsilon$   & 10e-2 & 5e-2 & 1e-2 & 5e-3 &1e-3  & 10e-2 & 5e-2 & 1e-2 & 5e-3 &1e-3\\
    \midrule
    $\gamma_{\theta}$ & 1e-6 & 1e-6 & 1e-6 & 1e-7 & 1e-7 & 1e-7 & 1e-7 & 1e-8 & 1e-8 & 1e-8\\
    $e(q_\theta^*)$ & 3.17e-2 & 2.25e-2 & 1.24e-2 & 1.24e-2 & 1.12e-2 & 8.92e-2 & 4.76e-2 & 3.86e-2 & 3.91e-2 & 2.67e-2\\
    $\gamma_{h}$ & 2e-6 & 1e-6 & 1e-7 & 5e-8 & 1e-8 & 2e-6 & 1e-6 & 1e-7 & 5e-8 & 1e-8\\
    $e(q_h^*)$ & 7.16e-2  & 4.76e-2 & 2.39e-2 & 2.04e-2 & 1.98e-2 & 1.23e-1 & 7.76e-2 & 3.58e-2 & 2.29e-2 & 1.54e-2\\
    \midrule
    \multicolumn{1}{c}{}&
    \multicolumn{5}{c}{(c) Example \ref{ex:para}(i)}&\multicolumn{5}{c}{(d) Example \ref{ex:para}(ii)}\\
    \cmidrule(lr){2-6} \cmidrule(lr){7-11}
    $\epsilon$   & 10e-2 & 5e-2 & 1e-2 & 5e-3 &1e-3  & 10e-2 & 5e-2 & 1e-2 & 5e-3 &1e-3\\
    \midrule
    $\gamma_{\theta}$ & 1e-6 & 1e-6 & 1e-6 & 1e-7 & 1e-7 & 1e-7 & 1e-7 & 1e-8 & 1e-8 & 1e-8\\
    $e(q_\theta^*)$ & 3.30e-2 & 3.13e-2 & 1.48e-2 & 1.47e-2 & 1.08e-2 & 6.21e-2 & 4.62e-2 & 2.85e-2 & 2.63e-2 & 2.68e-2\\
    $\gamma_{h}$ & 2e-6 & 1e-6 & 1e-7 & 5e-8 & 1e-8 & 2e-7 & 1e-7 & 1e-8 & 5e-9 & 1e-9\\
    $e(q_h)$ &  5.63e-2 & 4.97e-2 & 1.58e-2 & 1.53e-2 & 1.21e-2 &  7.18e-2 & 4.70e-2 & 2.08e-2 & 1.81e-2 & 1.80e-2\\
    \midrule
    \multicolumn{1}{c}{}&
    \multicolumn{5}{c}{(e) Example \ref{ex:1D_elliptic_subdomain}}\\
    \cmidrule(lr){2-6}
    $\epsilon$   & 10e-2 & 5e-2 & 1e-2 & 5e-3 &1e-3\\
    \cmidrule(lr){1-6}
        $\gamma_{\theta}$ & 1e-6 & 1e-6 & 1e-6 & 1e-6 & 1e-6 \\
    $e(q_\theta^*)$ & 2.92e-2 & 2.25e-2 & 1.43e-2 & 1.92e-2 & 1.36e-2  \\
    $\gamma_{h}$ & 2e-6 & 1e-6 & 1e-7 & 5e-8 & 1e-8\\
    $e(q_h)$ &  6.15e-2 & 4.09e-2 & 3.01e-2 & 2.20e-2 & 1.52e-2\\
    \bottomrule
    \end{tabular}
\end{table}

The first two examples are about the inverse problem in the elliptic case.
\begin{example}\label{ex:ell}
\begin{itemize}
    \item[(i)] $\Omega=(0,1)$,  $q^{\dag}(x)=2+\sin(2\pi x)$ and  $f\equiv 10$.
%The exact solution $u(q^{\dag})$ are generated with mesh size $h=1/1600$.
\item[(ii)] $\Omega=(0,1)^2$,  $q^{\dag}(x_1,x_2)=2+\sin(2\pi x_1)\sin(2\pi x_2)$, $u_0(x_1,x_2)=4x_1(1-x_2)$ and  $f\equiv 10$. %The exact solution $u(q^{\dag})$ are generated with mesh size $h=1/256$.
\end{itemize}
\end{example}

In the ADAM optimizer, the hybrid scheme employs a learning rate $\text{1e-3}$ and $\text{1e-2}$ for cases (i) and (ii), respectively.
The reconstructions for case (i) in Fig. \ref{Fig:elliptic_1D} show that the hybrid approach is more accurate than the pure FEM, although visually they are largely comparable, consistent with the prior observation \cite{berg2021neural}. This is also confirmed by the relative errors in Table \ref{tab:exam}(a). These results clearly show the influence of the discretization scheme on numerical inversion. The excellent performance of the hybrid method might be attributed to the strong implicit smoothness prior imposed by NNs, which strongly favors smooth solutions \cite{Rahaman:2019}, when compared with that by the FEM basis.

\begin{figure}[h!]
\centering
\begin{tabular}{ccc}
\includegraphics[width=0.3\textwidth]{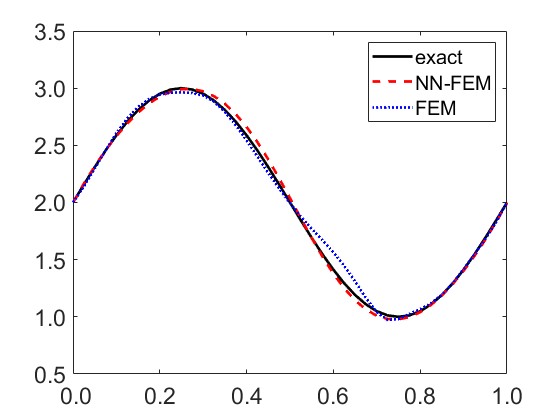} &
\includegraphics[width=0.3\textwidth]{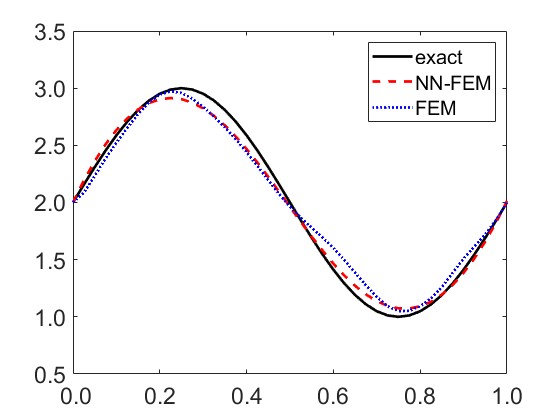} &
\includegraphics[width=0.3\textwidth]{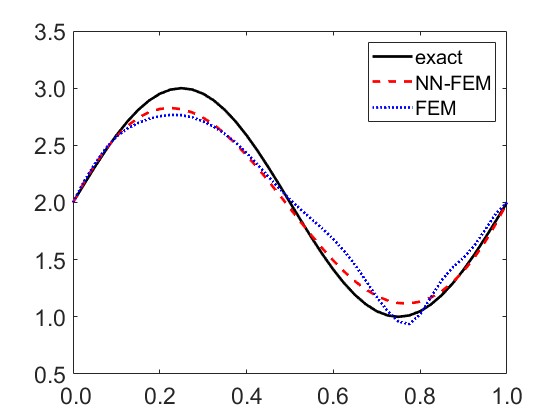} \\
(a) $1\%$ & (b) $5\%$ & (c) $10\%$
\end{tabular}
\caption{The reconstructions for Example \ref{ex:ell}(i) at three noise levels by the hybrid approach and pure FEM.}
\label{Fig:elliptic_1D}
\end{figure}

To gain further insights, we examine the change of the loss during the training process in Fig. \ref{Fig:elliptic_1D_loss}. The plots are for two cases: the 1-32-32-1 architecture with different noise levels to study the impact of data noise, and three architectures: i.e., 1-16-16-1, 1-32-32-1 and 1-32-32-32-1 (at a noise level $5\%$) to study the impact of the architectural choice. During the training, the loss $J$ first decreases only slowly, exhibiting a plateau phenomenon, and then it experiences a rapid decreasing period, after which it almost stagnates and oscillates a little bit. This pattern is consistently observed for all the considered noise levels. The origin of the plateau remains elusive; see \cite{AinsworthShin:2021} for an interesting investigation of the phenomenon for gradient descent on ReLU networks. The evolution of the relative error shows a similar behavior: it first decreases slowly, then enjoys a fast decay and finally tends to be nearly steady.

\begin{figure}[h!]
\centering
\subfigure[loss v.s. noise level]{
\includegraphics[width=0.4\textwidth]{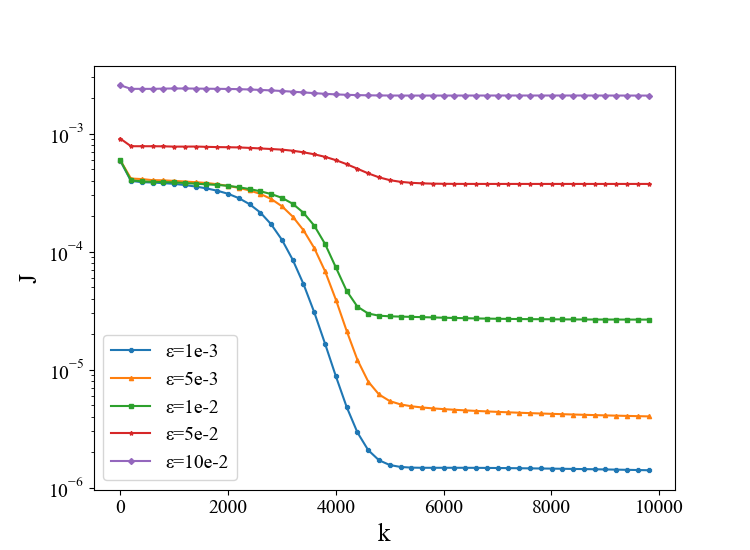}}
\subfigure[relative error v.s. noise level]{
\includegraphics[width=0.4\textwidth]{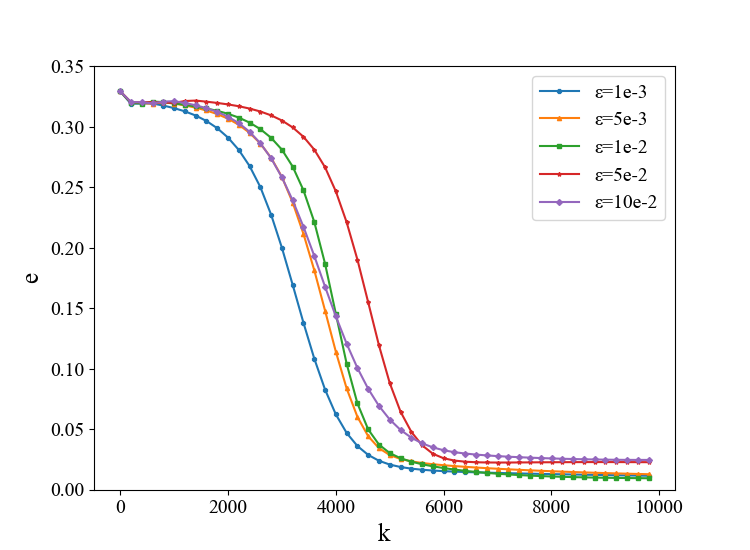}}
\subfigure[loss v.s. NN architecture]{
\includegraphics[width=0.4\textwidth]{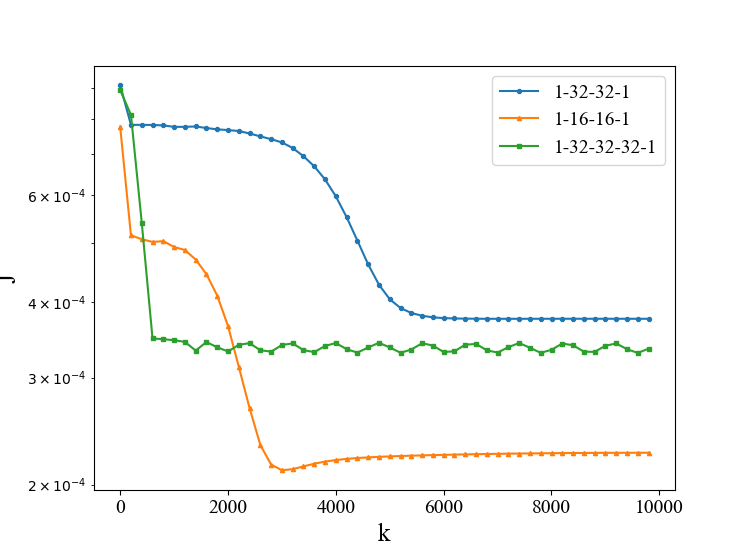}}
\subfigure[relative error v.s. NN architecture]{
\includegraphics[width=0.4\textwidth]{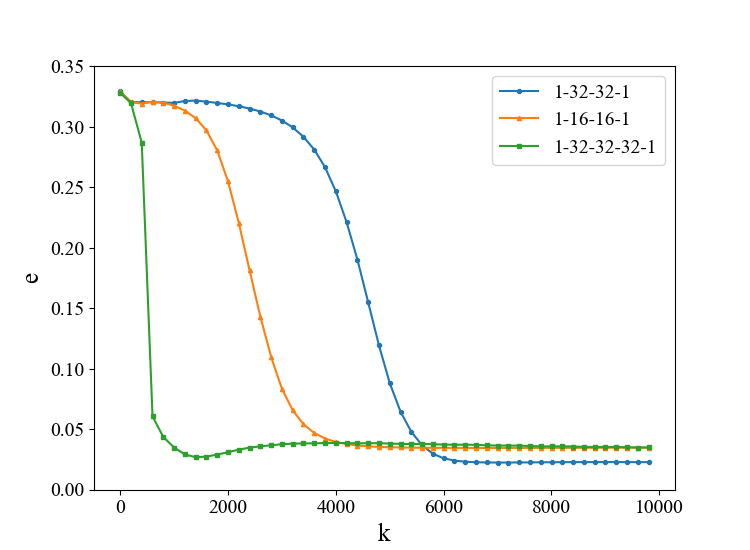}}
\caption{The variation of the loss $J$ and error $e$ during the training for Example \ref{ex:ell}(i) at different noise levels and NN architectures.}
\label{Fig:elliptic_1D_loss}
\end{figure}

Now we examine the influence of quadrature error, by varying the quadrature level $n$ over the set $\{0,1,\cdots,5\}$. This is carried out on two settings with $1\%$ noise: (i) the standard setting as before, and (ii) the setting with the architecture 1-128-128-128-1, a mesh size $h=1/40$, $\gamma=\text{1e-6}$, and a learning rate 1e-3. The architecture in the latter is far bigger, and hence, according to the error estimate in Theorem \ref{thm:error-ellip-q}, the problem becomes more challenging and may require more quadrature points to deliver quality reconstructions. The numerical results are given in Fig. \ref{Fig:1D_elliptic_quad}. It is observed that the relative error $e$ does decay slightly when using more quadrature points but the influence is very minor. Hence, the error bound in Theorem \ref{thm:error-ellip-q} might be overly pessimistic in terms of the quadrature error. In the rest of the experiments, we do not increase the quadrature level.

\begin{figure}[h!]
\centering
\subfigure[Example \ref{ex:ell}(i)]{
\includegraphics[width=0.4\textwidth]{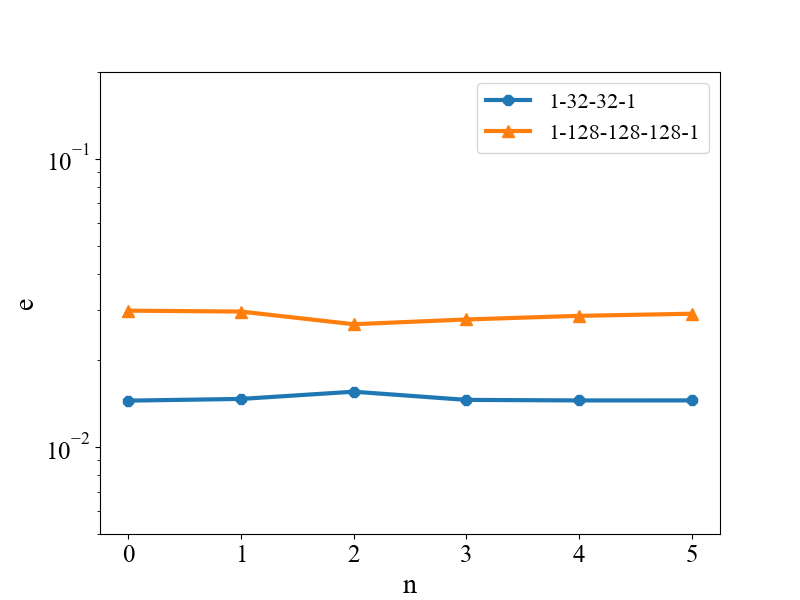}}
\subfigure[Example \ref{ex:para}(i)]{
\includegraphics[width=0.4\textwidth]{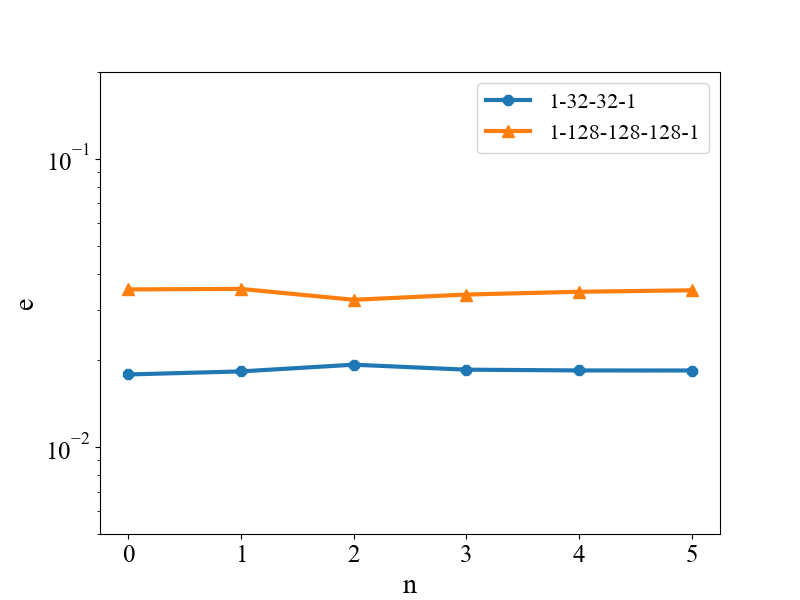}}
\caption{The relative errors for Examples \ref{ex:ell}(i)  and \ref{ex:para}(i) versus quadrature level and NN architectures, at a noise level $1\%$.}
\label{Fig:1D_elliptic_quad}
\end{figure}

In case (ii), the reconstructions by the hybrid approach is slightly more accurate than that by the pure FEM, when the data is highly noisy; see Fig. \ref{Fig:elliptic_2D} and Table \ref{tab:exam}(b). When the data is very accurate, the hybrid approach is actually slightly less accurate. This is attributed to the complex optimization issue: the loss is highly nonconvex in the NN parameters, and its landscape is very complicated, which may prevent the ADAM optimizer from finding a global minimizer. Fig. \ref{Fig:elliptic_2D_loss} shows the evolution of the loss $J$ and relative error $e$ in the two settings during the training process: (i) the 2-32-32-1 architecture with noise level varying form  $0.1\%$ to $10\%$ and (ii) with a fixed $1\%$ noise level, on three NNs, i.e., 2-16-16-1, 2-32-32-1, and 2-32-32-32-1. The results show a similar behavior as for case (i): the convergence curve shows fast convergence only after an initial plateau (of length about 5000 iterations). This may indicate the need of a better initialization strategy for the NN parameters in order to shorten the plateau length (and thus faster convergence).

\begin{figure}[h!]
\centering
\subfigure[exact]{
\includegraphics[width=0.3\textwidth]{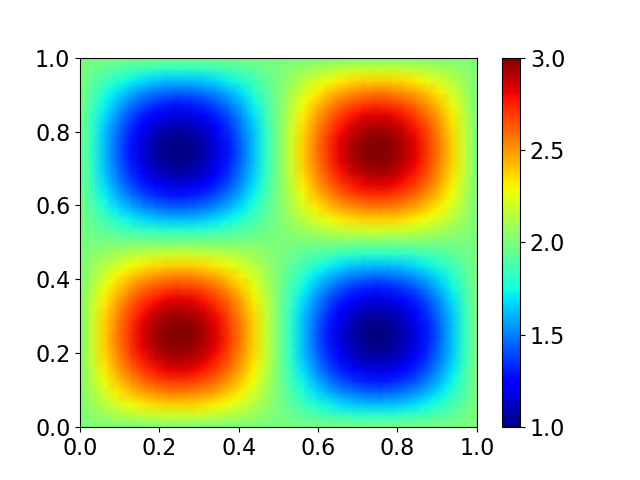}}
\subfigure[$1\%$ noise]{
\includegraphics[width=0.3\textwidth]{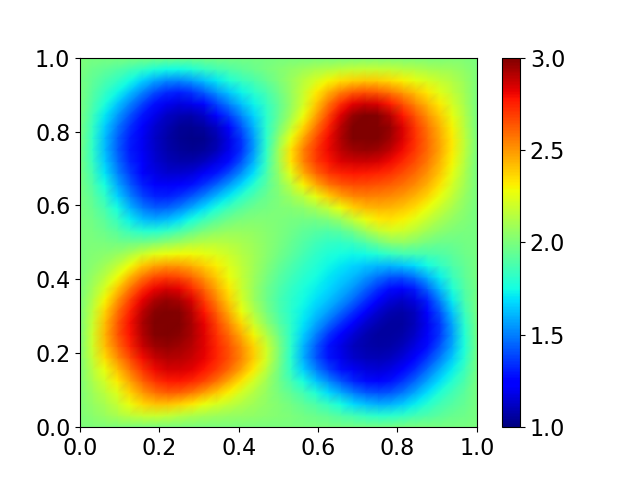}}
\subfigure[$5\%$ noise]{
\includegraphics[width=0.3\textwidth]{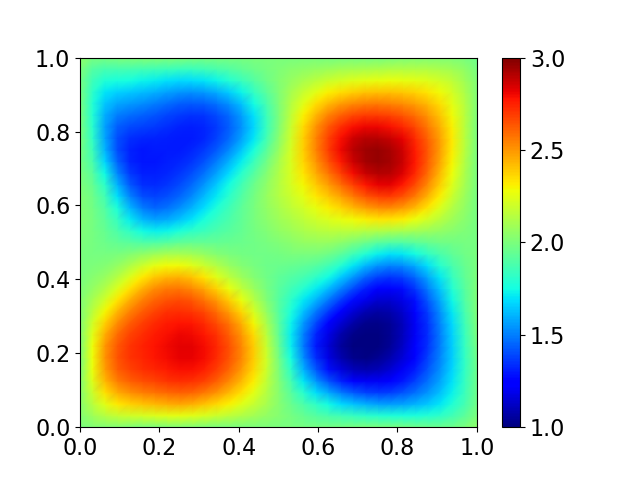}}

\subfigure[exact]{
\includegraphics[width=0.295\textwidth]{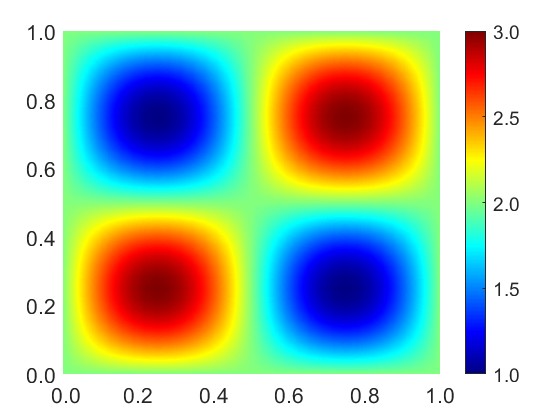}}
\subfigure[$1\%$ noise]{
\includegraphics[width=0.295\textwidth]{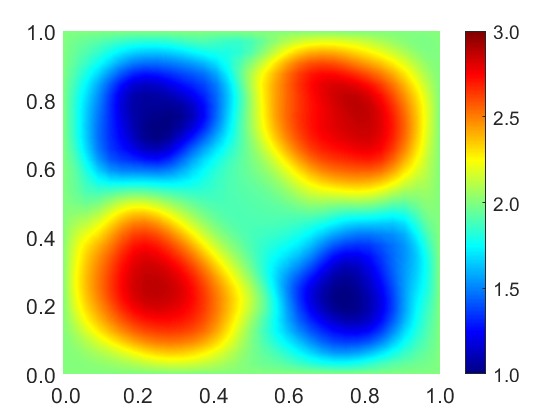}}
\subfigure[$5\%$ noise]{
\includegraphics[width=0.295\textwidth]{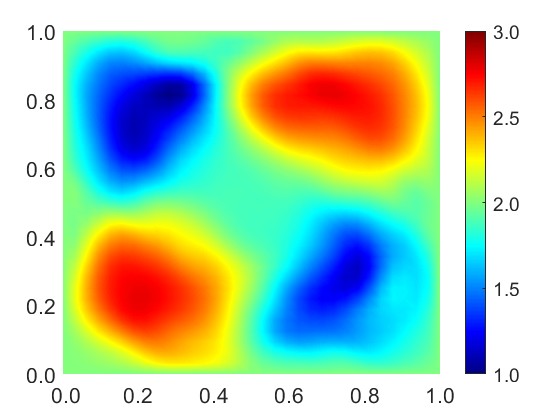}}
\caption{The reconstructions for Example \ref{ex:ell}(ii) at two noise levels with the hybrid method (top) and the pure FEM (bottom).}
\label{Fig:elliptic_2D}
\end{figure}

\begin{figure}[h!]
\centering
\subfigure[loss v.s. noise level]{
\includegraphics[width=0.45\textwidth]{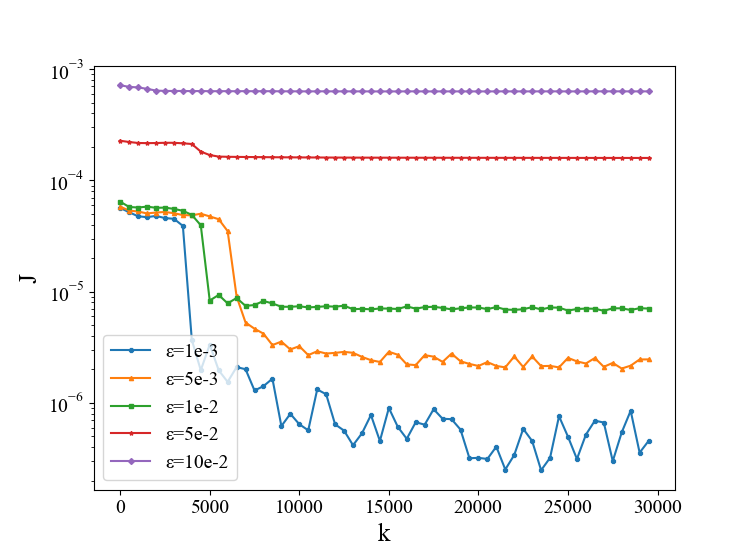}}
\subfigure[relative error v.s. noise level]{
\includegraphics[width=0.45\textwidth]{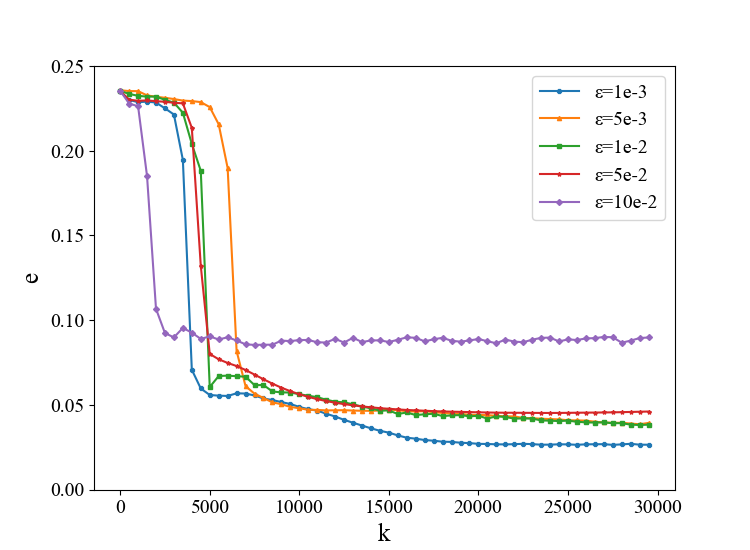}}
\subfigure[loss v.s. NN architecture]{
\includegraphics[width=0.45\textwidth]{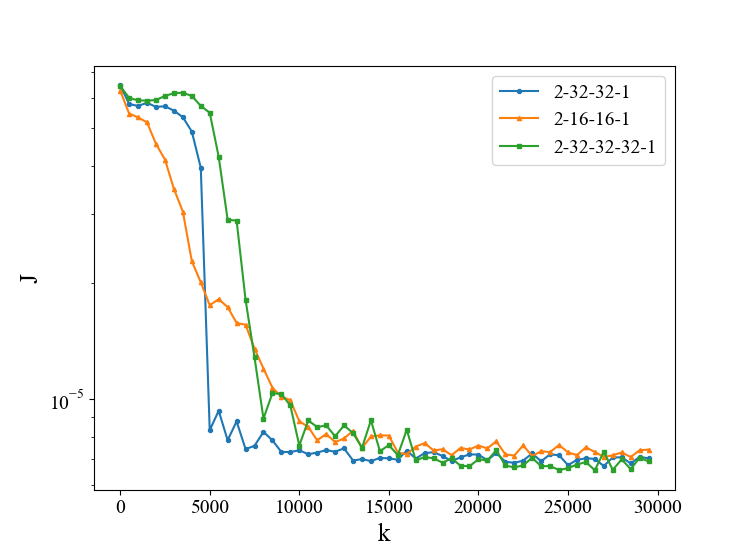}}
\subfigure[relative error v.s. NN architecture]{
\includegraphics[width=0.45\textwidth]{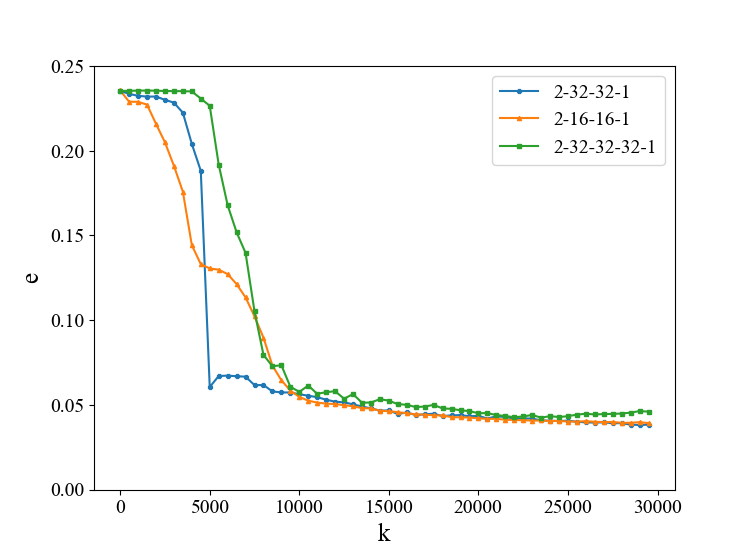}}
\caption{The variation of the loss $J$ and error $e$ during the training for Example \ref{ex:ell}(ii) at different noise levels and NN architecture.}
\label{Fig:elliptic_2D_loss}
\end{figure}

The second set of experiments is for the inverse conductivity problem in the parabolic case.
\begin{example}\label{ex:para}
\begin{itemize}
    \item[(i)] $\Omega=(0,1)$,  $q^{\dag}(x)=2+\sin(2\pi x)$ and  $f(x,t)=10t$, $T_0=0.9$, and $T=1$. %The measuremenet is on the domain $\Omega\times (T_0,T)$. % The exact solution $u(q^{\dag})$ are generated with mesh size $h=1/1600$ and time step $\tau=1/30000$.
    \item[(ii)] $\Omega=(0,1)^2$,  $q^{\dag}(x_1,x_2)=2+\sin(2\pi x_1)\sin(2\pi x_2)$, $u_0(x_1,x_2)=4x_1(1-x_1)$ and $f(x_1,x_2,t)= 10t$, $T_0=0.9$ and $T=1$. % The exact solution $u(q^{\dag})$ are generated with mesh size $h=1/256$ and time step $\tau=1/10000$.
\end{itemize}
\end{example}

In the ADAM optimizer, the hybrid scheme employs a learning rate $\text{1e-3}$ and  $\text{1e-2}$ for cases (i) and (ii), respectively. The numerical results in Figs. \ref{Fig:para_1D} and \ref{Fig:para_2D} (also Tables \ref{tab:exam}(c)--(d)) show similar observations as for the elliptic case: the hybrid approach appears to more accurate for highly noisy data. Likewise, the influence of the quadrature error on the reconstruction eerror $e$ is again very mild, cf. Fig. \ref{Fig:1D_elliptic_quad}(b).

\begin{figure}[h!]
\centering
\begin{tabular}{ccc}
\includegraphics[width=0.3\textwidth]{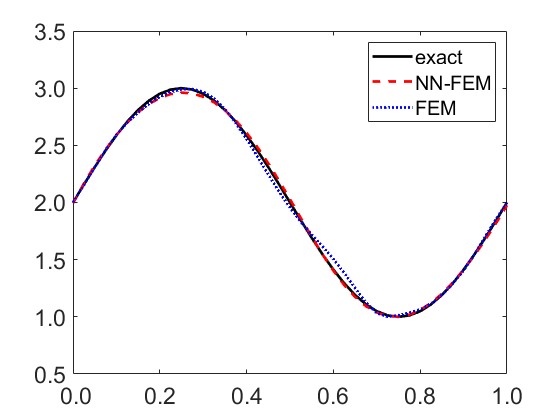}&
\includegraphics[width=0.3\textwidth]{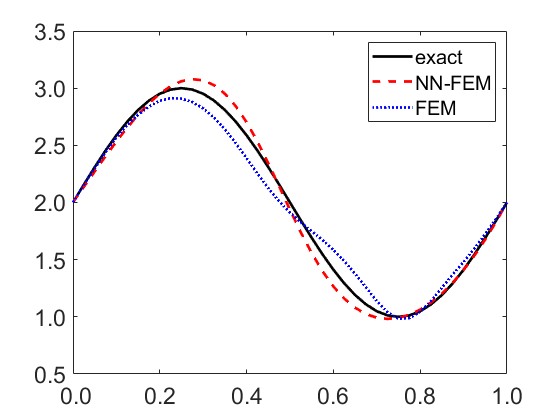}&
\includegraphics[width=0.3\textwidth]{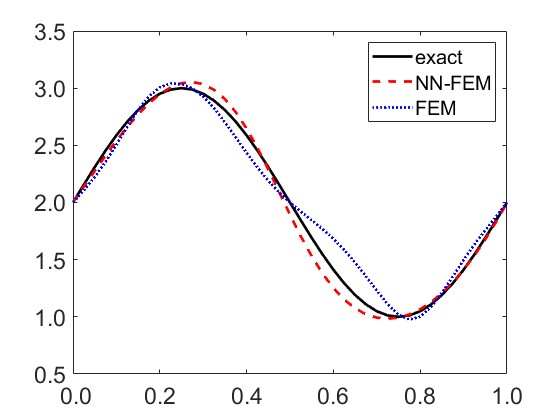}\\
(a) $1\%$ noise & (b) $5\%$ noise & (c) $10\%$ noise
\end{tabular}
\caption{The reconstructions for Example \ref{ex:para}(i) with three noise levels, obtained by the hybrid method and the pure FEM.}
\label{Fig:para_1D}
\end{figure}

\begin{figure}[h!]
\centering
\begin{tabular}{ccc}
\includegraphics[width=0.3\textwidth]{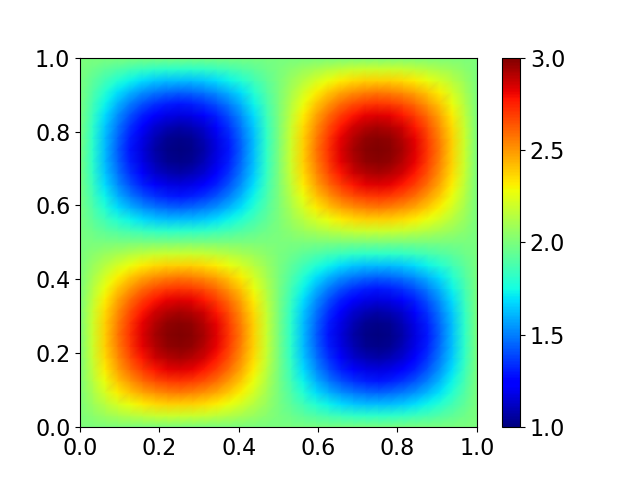}&
\includegraphics[width=0.3\textwidth]{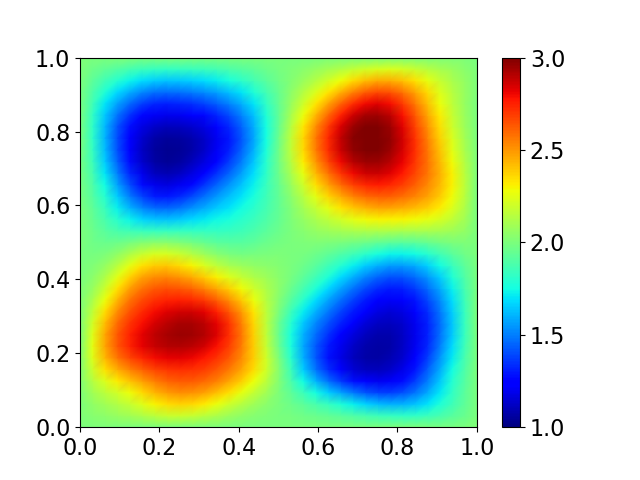}&
\includegraphics[width=0.3\textwidth]{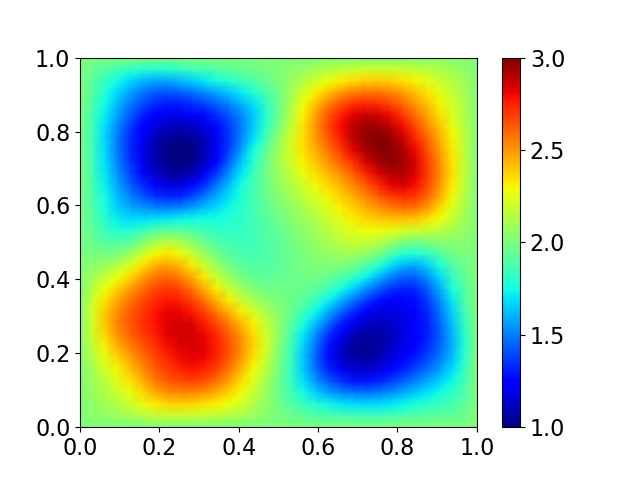}\\
\includegraphics[width=0.295\textwidth]{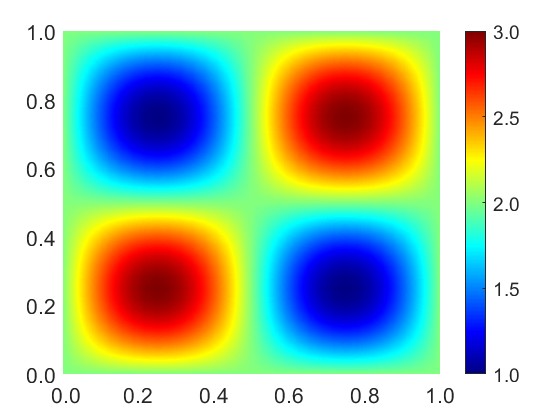} &
\includegraphics[width=0.295\textwidth]{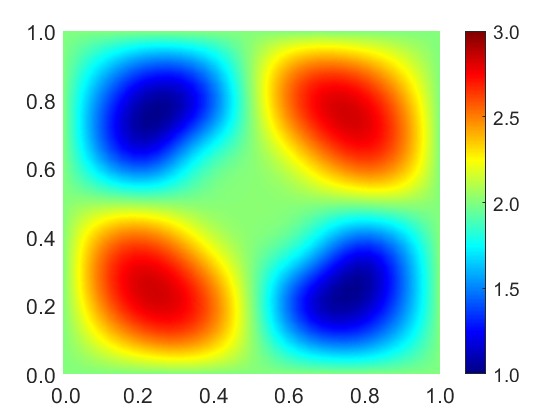} &
\includegraphics[width=0.295\textwidth]{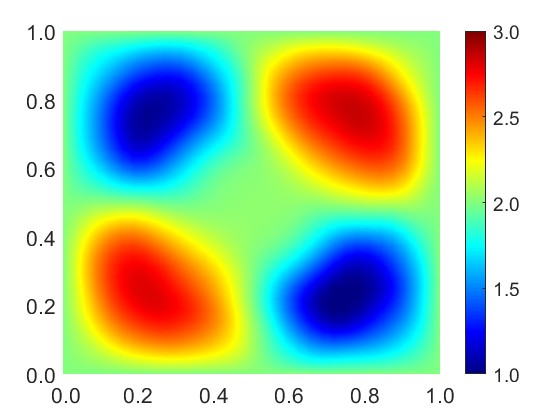}\\
(a) exact & (b) $1\%$ noise & (c) $5\%$ noise
\end{tabular}
\caption{The reconstructions for Example \ref{ex:para}(ii) at two noise levels, by the hybrid method (top) and the pure FEM (bottom).}
\label{Fig:para_2D}
\end{figure}

The last example is about partial interior data (on a subdomain $\omega\subset \Omega$).
\begin{example}\label{ex:1D_elliptic_subdomain}
$\Omega=(0,1)$, $q^{\dag}(x)=2+10(1-x)x^2$ and $f\equiv 10$, $\omega=(0.3,0.7)$.
\end{example}

For the hybrid inversion, we employ a learning rate $\text{1e-3}$. The numerical results are presented in Fig. \ref{Fig:1D_elliptic_subdomain}; see also Table \ref{tab:exam}(e) for the relative errors. Due to the availability of the only partial interior data, the problem is far more ill-posed. It is observed that the reconstructions by the hybrid approach is more accurate than that by the pure FEM, indicating the high robustness of the hybrid approach for more challenging inverse problems.

\begin{figure}[h!]
\centering
\begin{tabular}{ccc}
\includegraphics[width=0.3\textwidth]{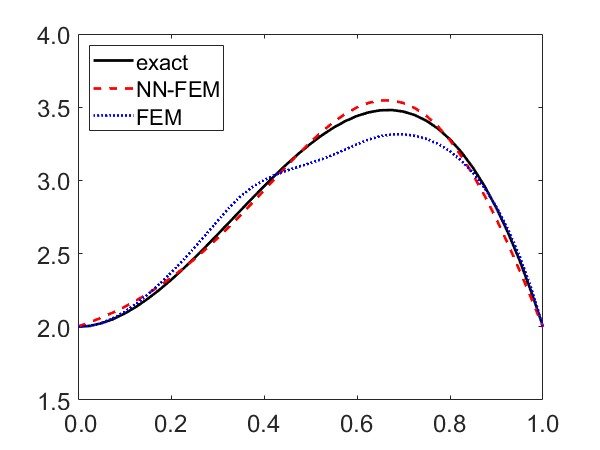}&
\includegraphics[width=0.3\textwidth]{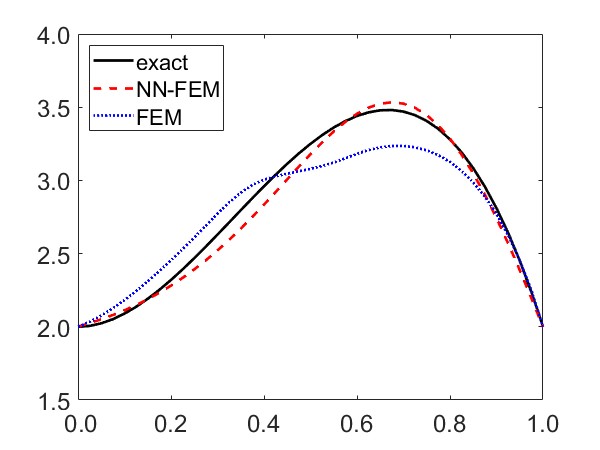}&
\includegraphics[width=0.3\textwidth]{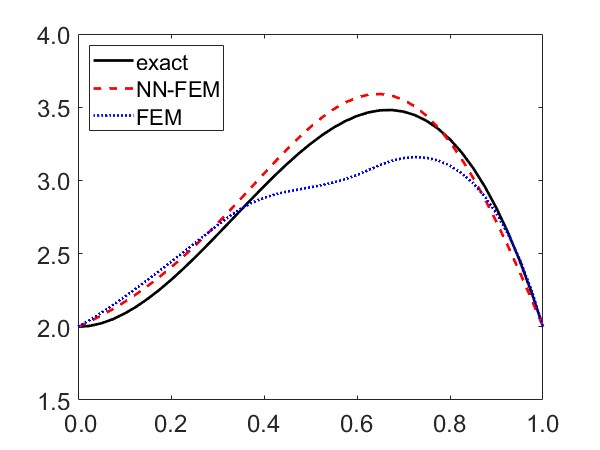}\\
(a)  $1\%$ noise & (b) $5\%$ noise & (c) $10\%$ noise
\end{tabular}
\caption{The numerical reconstructions for Example \ref{ex:1D_elliptic_subdomain} at three noise levels, obtained with the hybrid method and the pure FEM.}
\label{Fig:1D_elliptic_subdomain}
\end{figure}

\bibliographystyle{abbrv}

\end{document}